\newcommand{\N}{\mathbb{N}}
\newcommand{\R}{\mathbb{R}}
\newcommand{\polytope}{\mathcal{P}}
\newcommand{\complex}{\mathcal{C}}
\newcommand{\liftedstartriangulation}{\mathcal{T}}
\newcommand{\triangulation}{\mathcal{T}}
\newcommand{\pointset}{V}
\newcommand{\pointsetnonconvex}{\pointset'}
\newcommand{\spineset}{U}
\newcommand{\spinesetcard}{n}
\newcommand{\simplotope}{\mathcal{S}}
\newcommand{\linearmap}{\Phi}
\newcommand{\parallelotope}[1]{p(#1)}
\newcommand{\hyperplane}{\mathbb{H}}
\newcommand{\rotationaxis}{\mathbb{A}}
\newcommand{\facet}{\mathcal{F}}
\newcommand{\ridge}{\mathcal{R}}
\newcommand{\face}{\mathcal{F}}
\newcommand{\pulling}{\mathrm{Pull}}
\newcommand{\simplex}{\Delta}
\newcommand{\birkhoff}{\mathcal{B}}
\newcommand{\birkhoffspine}{U}
\newcommand{\fold}[1]{\hat{#1}}
\newcommand{\ce}{:=}
\newcommand{\set}[1]{\left\{#1\right\}}
\renewcommand{\vector}[1]{{\mathbf{#1}}}
\newcommand{\abs}[1]{\left\lvert#1\right\rvert}
\newcommand{\everestpolytope}{\mathcal{E}}
\newcommand{\setransform}{\Pi}
\newcommand{\everestpolytopelong}[2]{{\everestpolytope_{#1,#2}}}
\newcommand{\verticesone}[2]{{P_{#1,#2}}}
\newcommand{\verticesminusone}[2]{{V_{#1,#2}}}
\newcommand{\verticeszero}[2]{{U_{#1,#2}}}
\newcommand{\verticeseverest}[2]{{E_{#1,#2}}}
\newcommand{\standard}{\vector{e}}
\newcommand{\conv}{\operatorname{conv}}
\newcommand{\SoVertices}{{V}}
\newcommand{\underlyingspace}[1]{\bigcup #1}
\newcommand{\vol}{\mathrm{vol}}
\newtheorem{maintheorem}{Main Theorem}
\newtheorem{theorem}{Theorem}
\newtheorem{lemma}[theorem]{Lemma}
\newtheorem{corollary}[theorem]{Corollary}
\title{Constrained Triangulations, Volumes of Polytopes,\\ and Unit Equations}
\author{
Michael Kerber, Robert Tichy, Mario Weitzer
}
\date{}
\begin{document}

\maketitle

\begin{abstract}
\noindent
Given a polytope $\mathcal{P}$ in $\mathbb{R}^d$ 
and a subset $U$ of its vertices,
is there a triangulation of $\mathcal{P}$ using $d$-simplices
that all contain $U$?
We answer this question by proving an equivalent and easy-to-check
combinatorial criterion for the facets of $\mathcal{P}$.
Our proof relates triangulations of $\mathcal{P}$ to
triangulations of its ``shadow'', a projection to a lower-dimensional
space determined by $U$. In particular, we obtain a formula
relating the volume of $\mathcal{P}$ with the volume of its shadow.
This leads to an exact formula for the volume of a polytope
arising in the theory of unit equations.
\end{abstract}

\section{Introduction}
\paragraph{Problem statement and results.}
Let $\polytope$ be a convex polytope in $\R^d$, that is,
the convex hull of a finite point set $\pointset$,
and let $\spineset$ be a subset of $\pointset$.
We ask for a triangulation of (the interior of) $\polytope$
with the property that every $d$-simplex in the triangulation
contains all points of $\spineset$ as vertices, calling
it a \emph{$\spineset$-spinal triangulation}. 
A simple example is the \emph{star triangulation} of $\polytope$ (Figure~\ref{fig:example_star_triangulation}),
where all $d$-simplices contain a common vertex $\vector{p}$,
and $\spineset$ is the singleton set consisting of that point.
Another example is the $d$-hypercube with $\spineset$ being a pair of opposite
points (Figure~\ref{fig:example_spinal_triangulation}). Indeed, the hypercube can be triangulated in a way that 
all $d$-simplices contain the space diagonal spanned 
by $\spineset$~\cite{Freudenthal:1942,EdelsbrunnerKerber:2012}.

\begingroup
\centering
\begin{minipage}[t][5cm][t]{0.475\textwidth}
\begin{figure}[H]
\centering
\includegraphics[height=2.75cm]{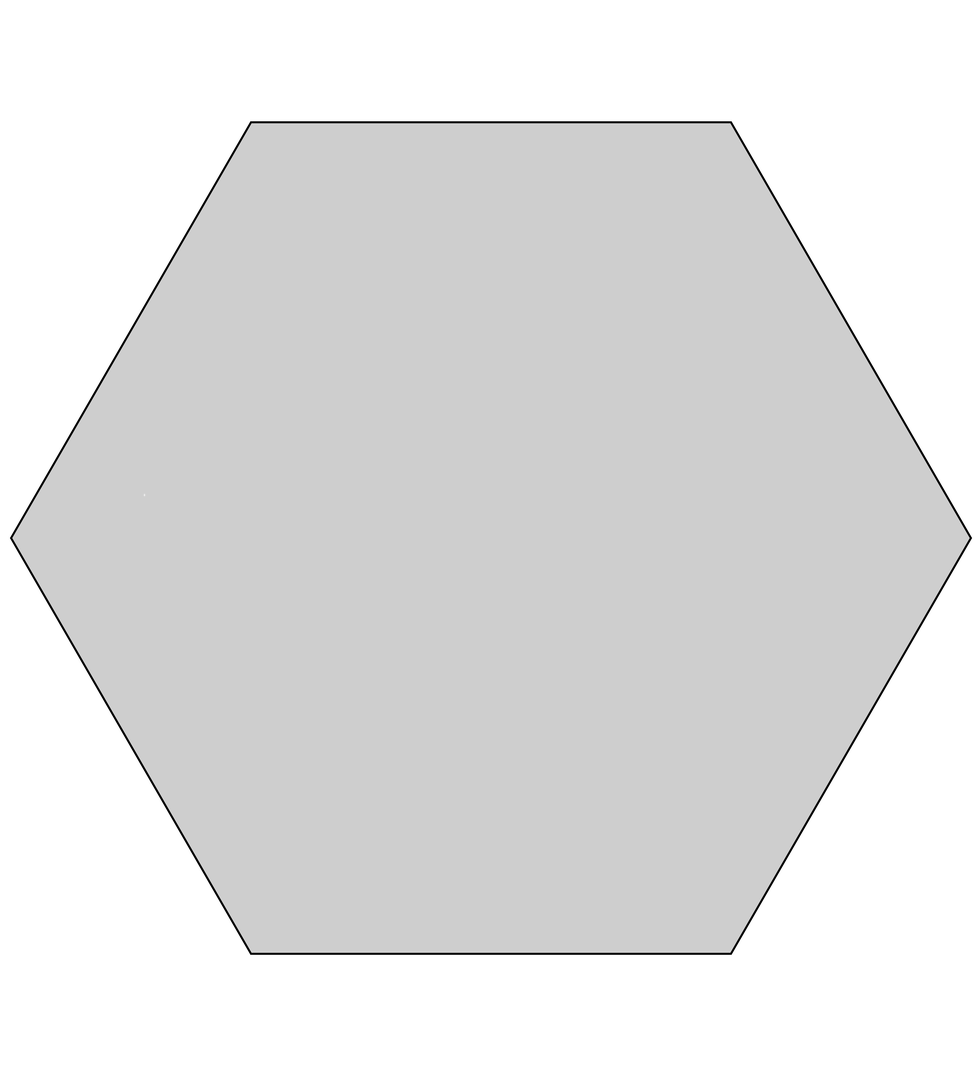}
\;
$\overset{\longrightarrow}{\rule{0cm}{1.25cm}}$
\;
\includegraphics[height=2.75cm]{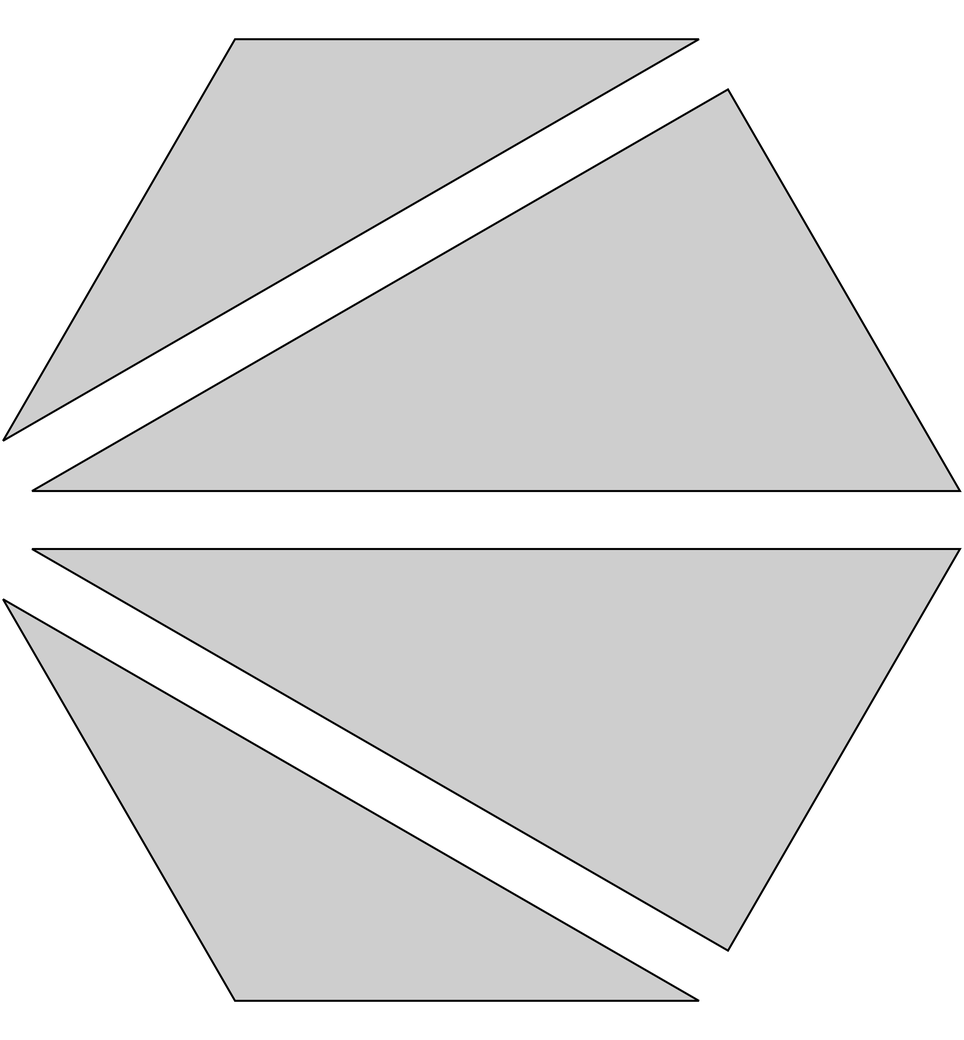}
\caption{A star triangulation of a hexagon.}
\label{fig:example_star_triangulation}
\end{figure}
\end{minipage}
\quad
\begin{minipage}[t][5cm][t]{0.475\textwidth}
\begin{figure}[H]
\centering
\includegraphics[height=2.75cm]{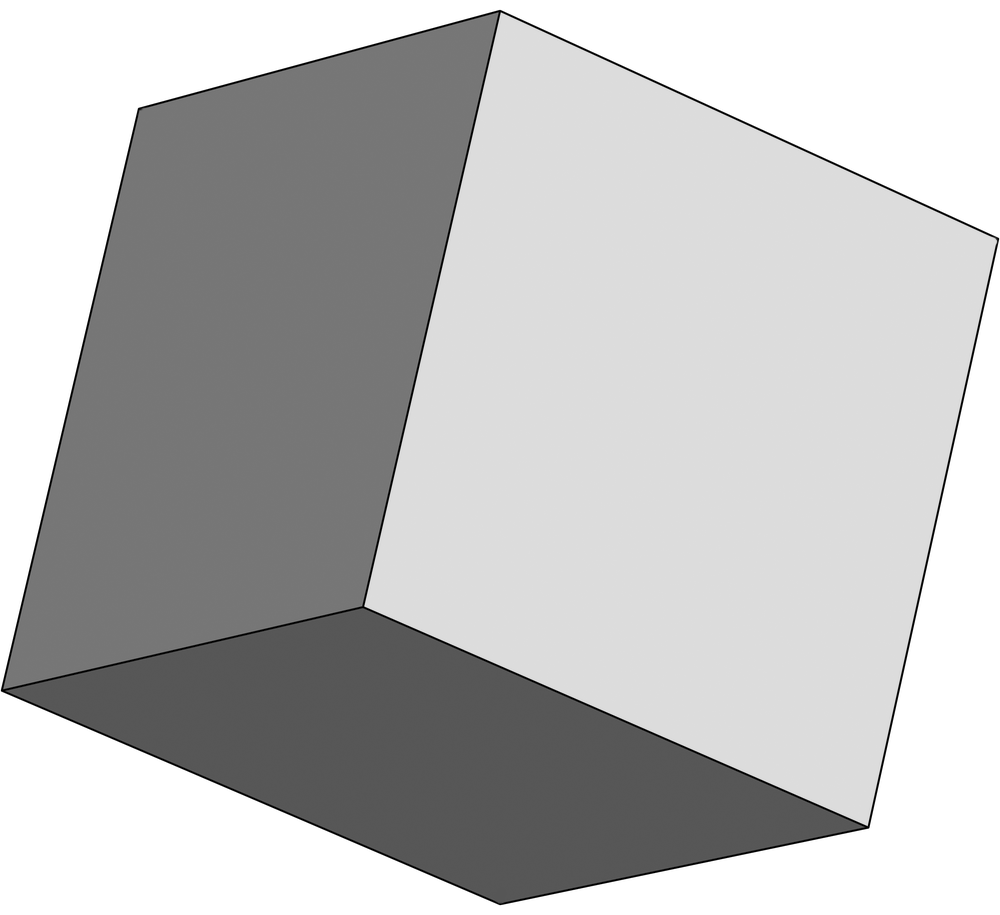}
\;
$\overset{\longrightarrow}{\rule{0cm}{1.25cm}}$
\;
\includegraphics[height=2.75cm]{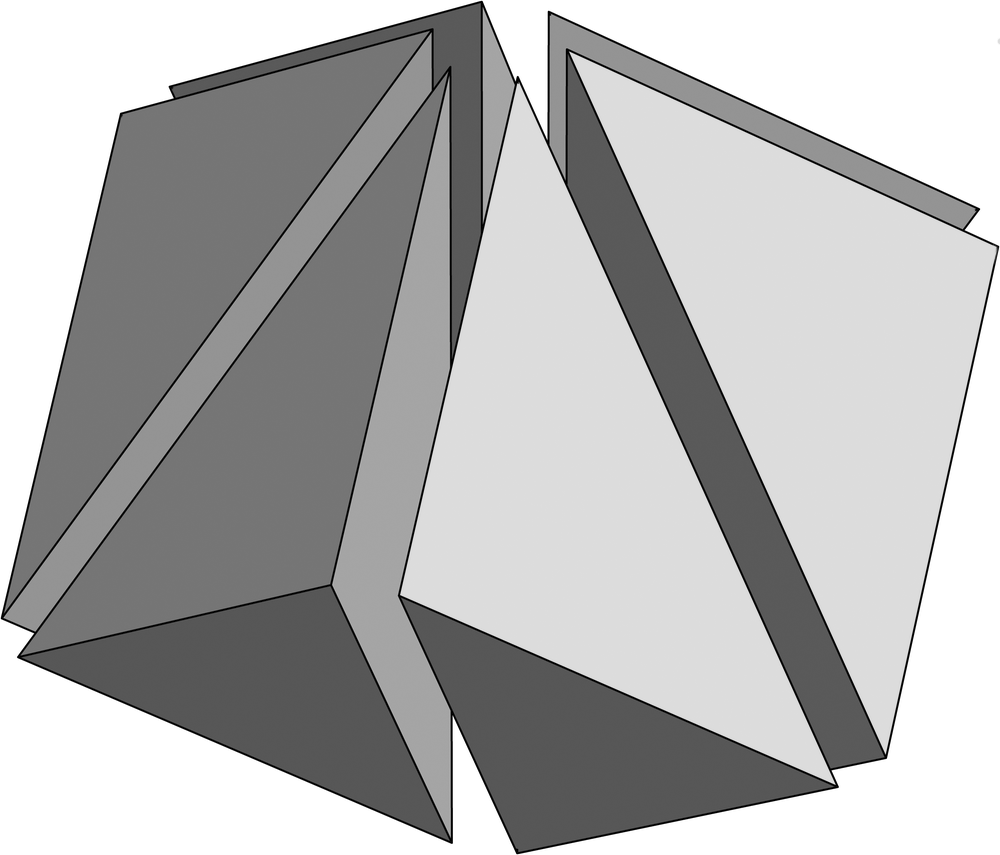}
\caption{A $\spineset$-spinal triangulation of a cube, where $\spineset$ consists of the two vertices on a space diagonal.}
\label{fig:example_spinal_triangulation}
\end{figure}
\end{minipage}
\endgroup

We are interested in what combinations of $\polytope$ and $\spineset$
admit spinal triangulations. Our results provide a simple
combinatorial answer for this question: 
Denoting by $\spinesetcard$ the cardinality of $\spineset$, 
a $\spineset$-spinal
triangulation of $\polytope$ exists if and only if each facet of $\polytope$
contains at least $\spinesetcard-1$ vertices of $\spineset$. 
In that case, we call $\spineset$ a \emph{spine} of $\polytope$.
More generally, we provide a complete characterization of spinal triangulations:
let $\linearmap$ denote the orthogonal projection of $\R^d$ to the orthogonal
complement of the lower-dimensional flat spanned by $\spineset$.
$\linearmap$ maps $\spineset$ to $\vector{0}$ by construction,
and $\polytope$ is mapped to a \emph{shadow} 
$\fold{\polytope}\ce\linearmap(\polytope)$. We obtain a $\spineset$-spinal triangulation
of $\polytope$ by first star-triangulating $\fold{\polytope}$ with respect
to $\vector{0}$ and then lifting each maximal simplex to $\R^d$ by taking 
the preimage of its vertices under $\linearmap$ (Figure~\ref{fig:example_lifting_process}). Vice versa, every spinal triangulation
can be obtained in this way.
\begin{figure}[H]
\centering
\includegraphics[height=2.75cm]{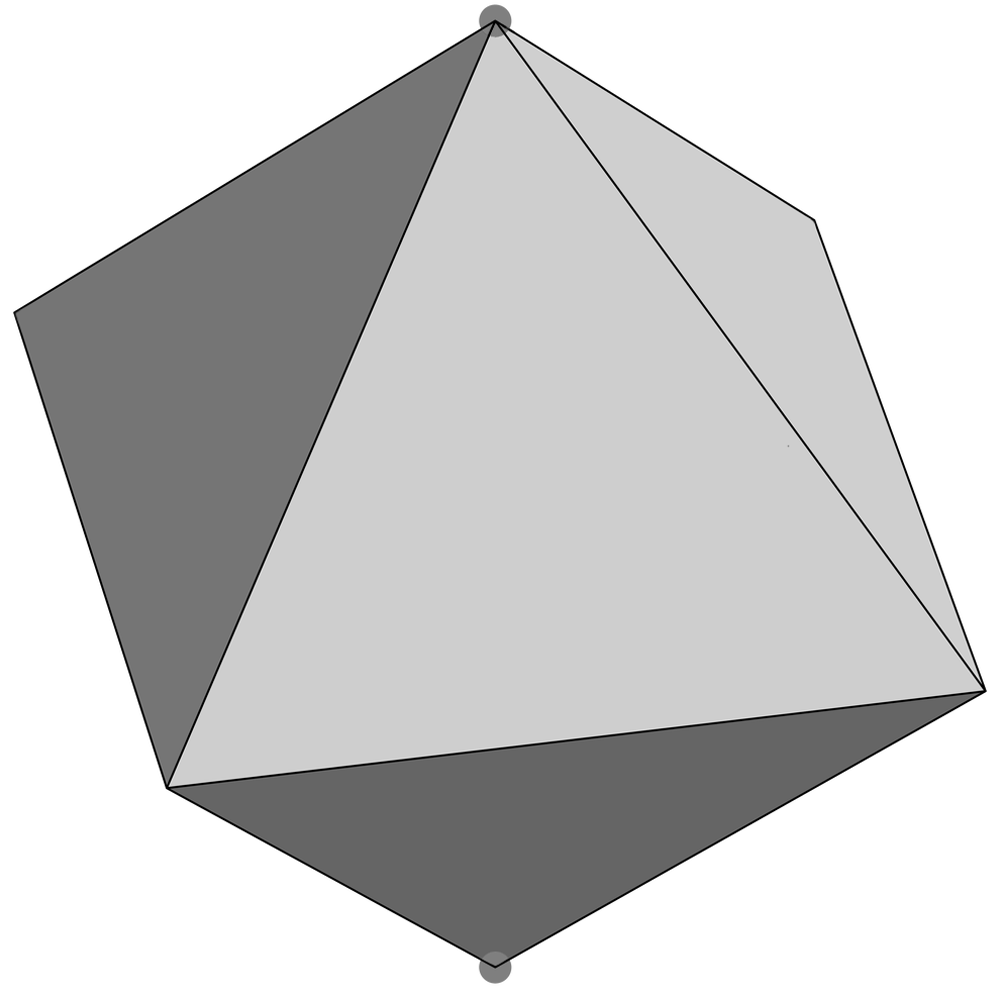}
\,
$\overset{\overset{(1)}{\longrightarrow}}{\rule{0cm}{1.25cm}}$
\includegraphics[height=2.75cm]{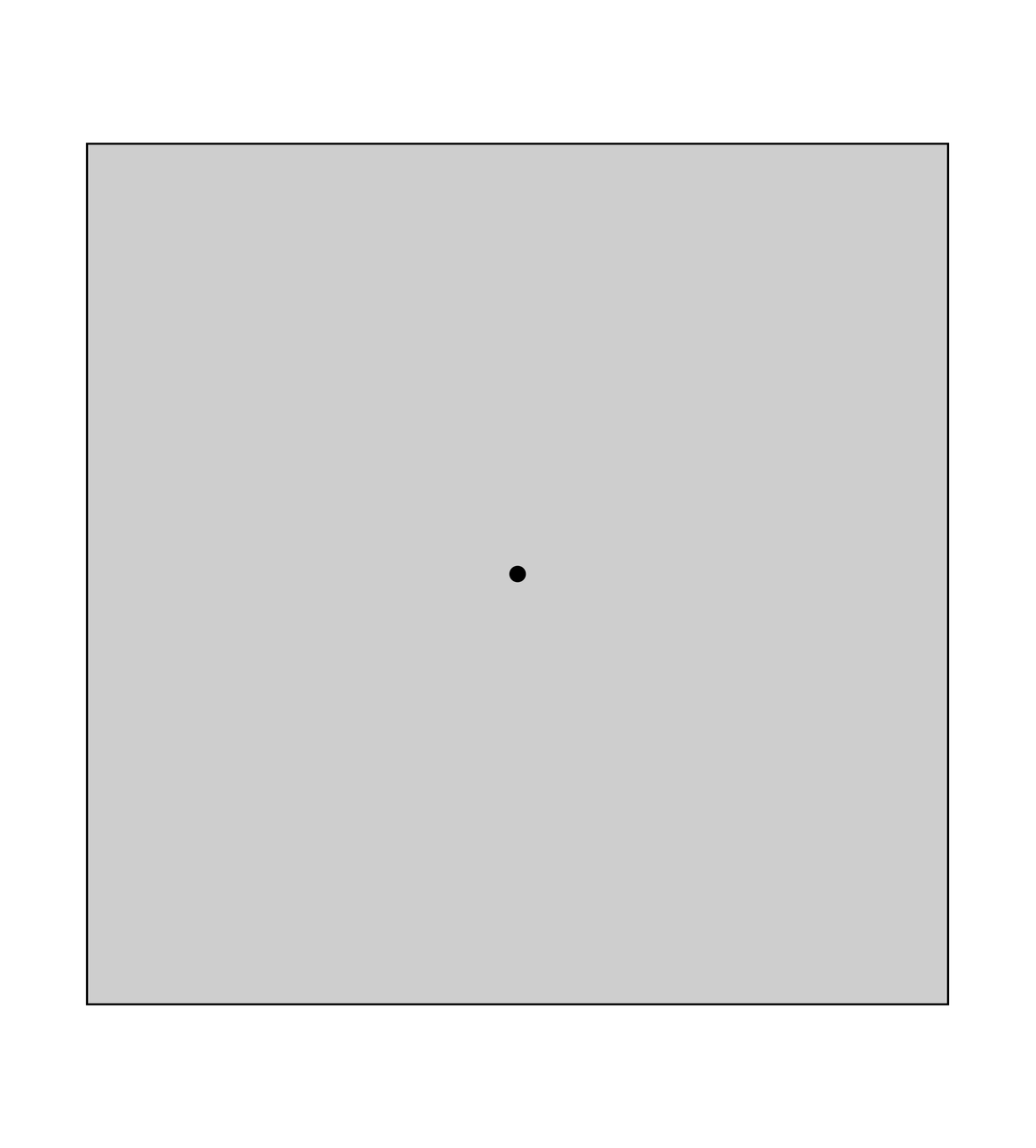}
$\overset{\overset{(2)}{\longrightarrow}}{\rule{0cm}{1.25cm}}$
\includegraphics[height=2.75cm]{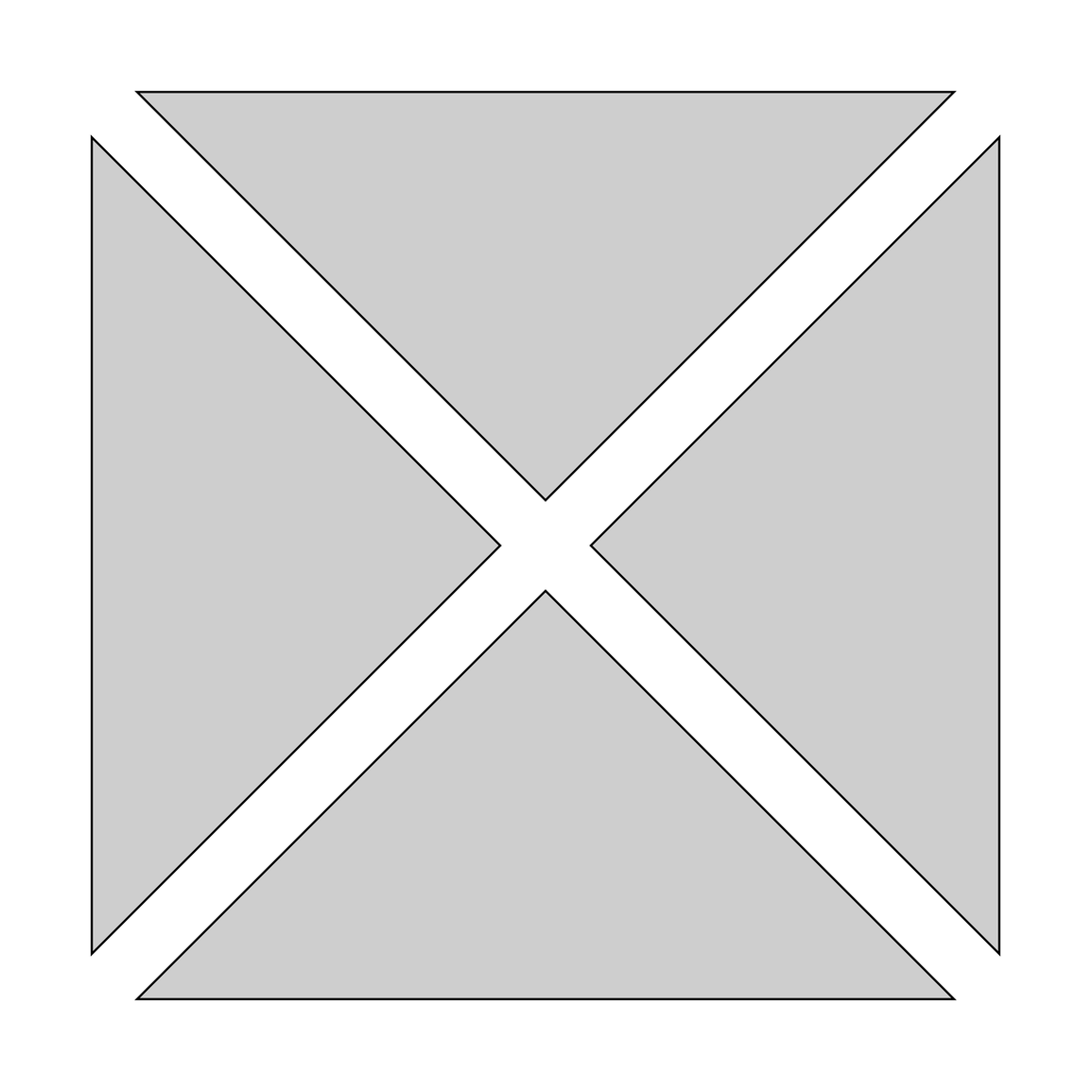}
$\overset{\overset{(3)}{\longrightarrow}}{\rule{0cm}{1.25cm}}$
\,
\includegraphics[height=2.75cm]{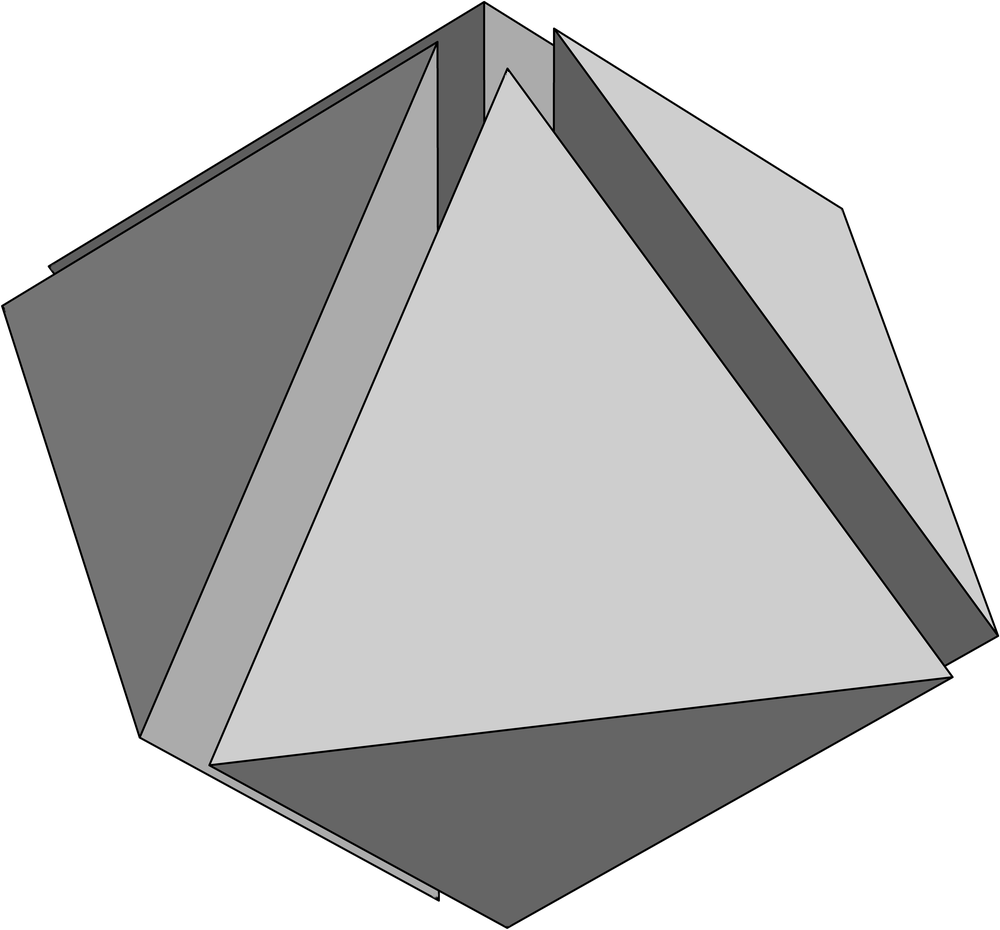}\\[\baselineskip]
\includegraphics[height=2.75cm]{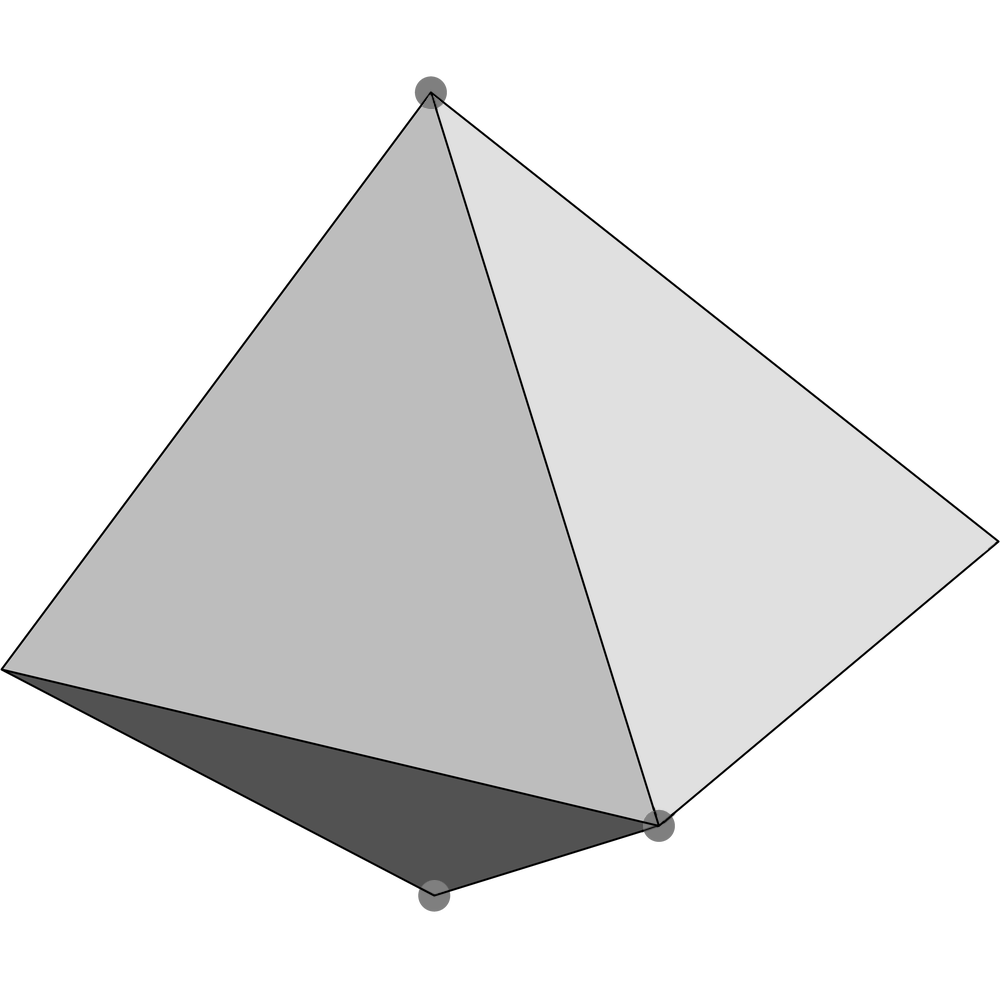}
\,
$\overset{\overset{(1)}{\longrightarrow}}{\rule{0cm}{1.25cm}}$
\includegraphics[height=2.75cm]{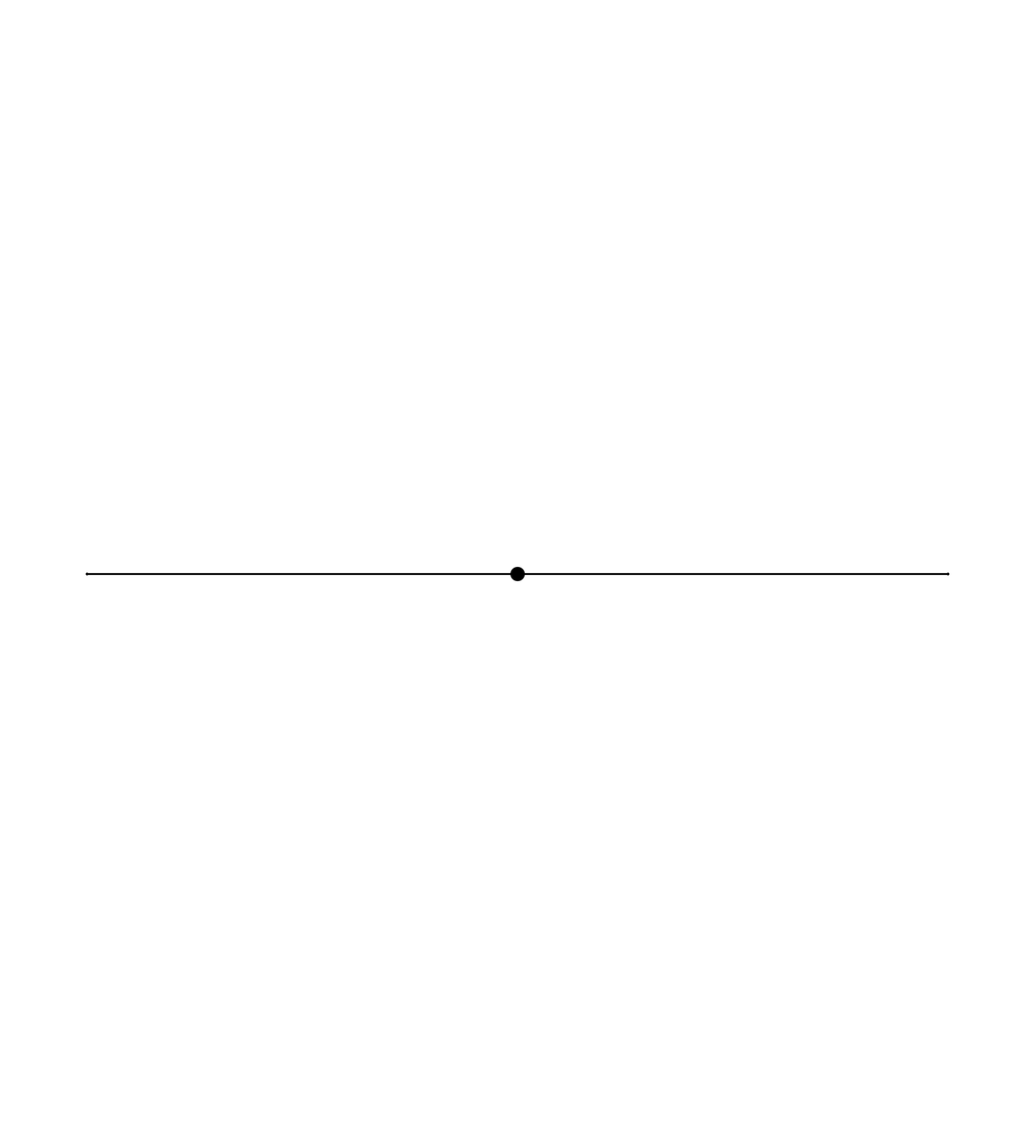}
$\overset{\overset{(2)}{\longrightarrow}}{\rule{0cm}{1.25cm}}$
\includegraphics[height=2.75cm]{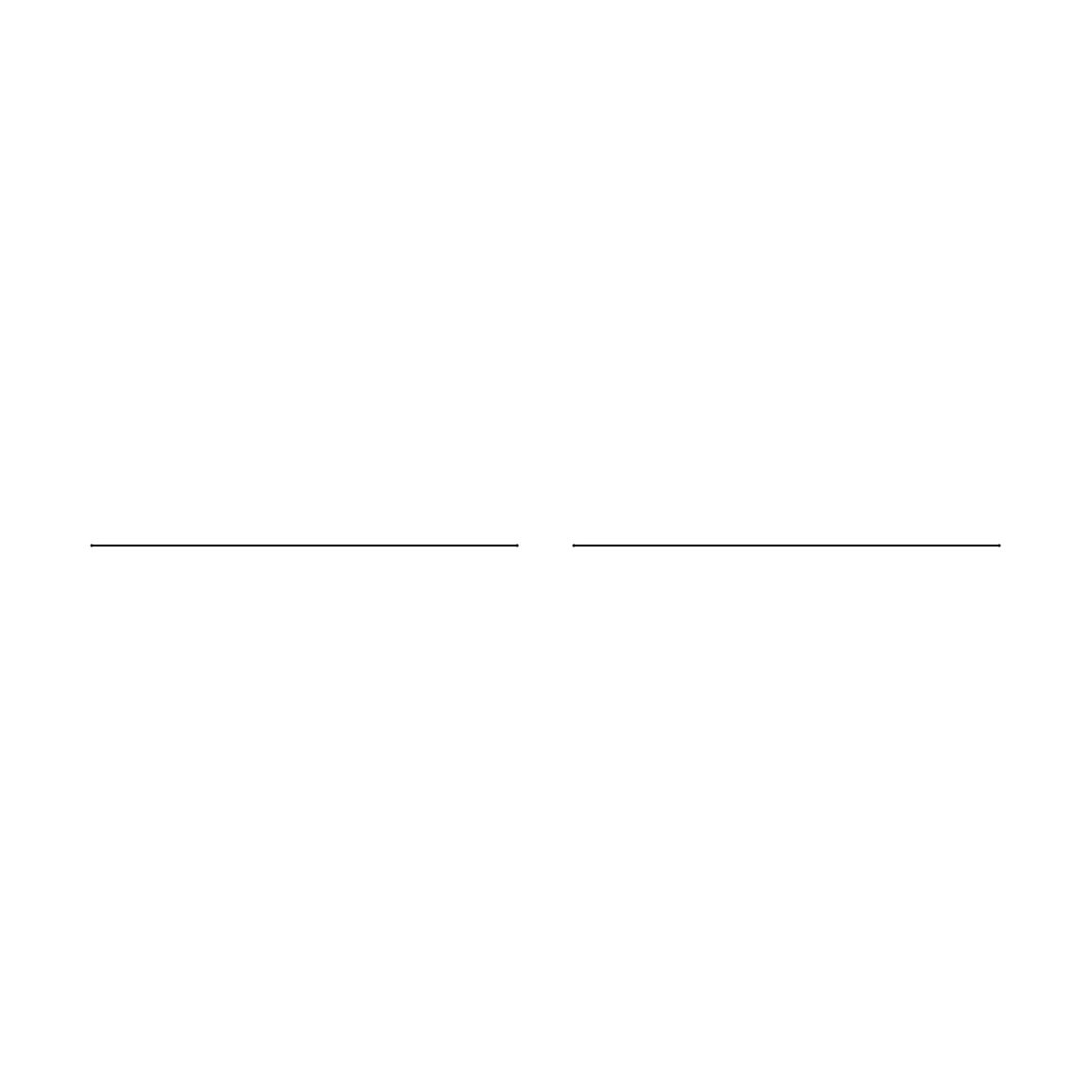}
$\overset{\overset{(3)}{\longrightarrow}}{\rule{0cm}{1.25cm}}$
\,
\includegraphics[height=2.75cm]{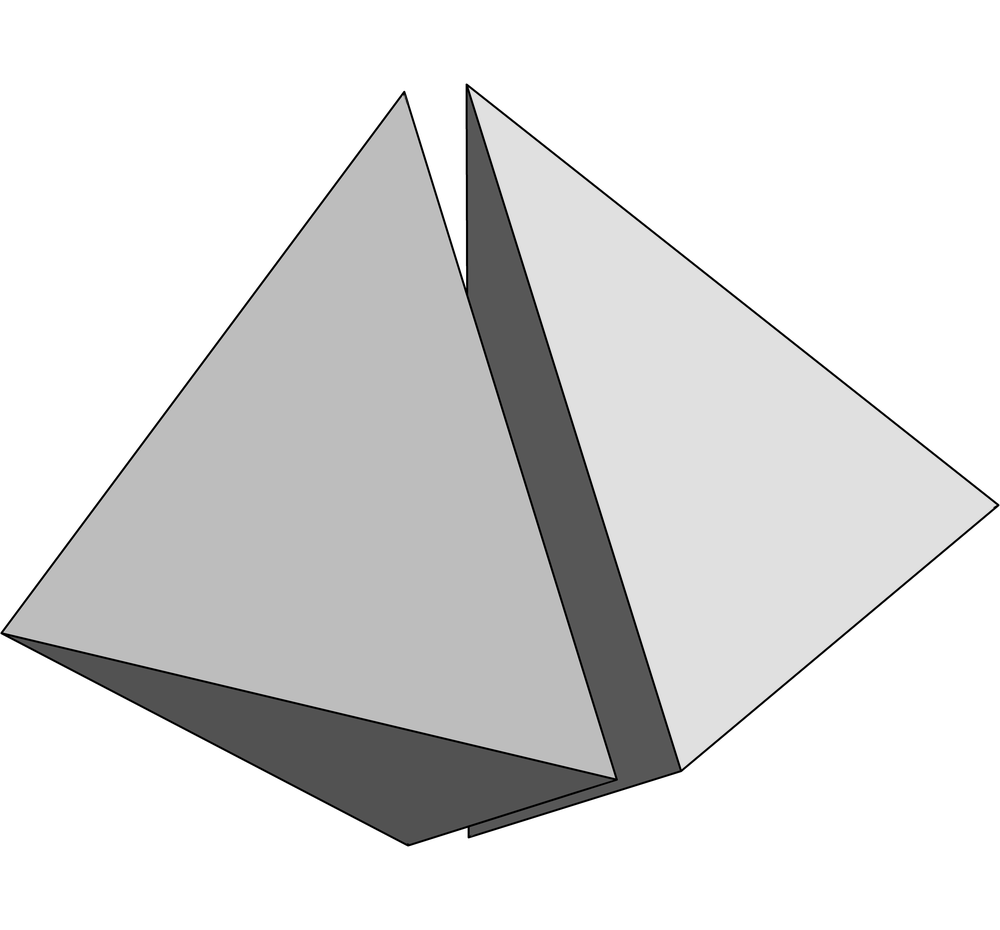}
\caption{Two examples of the lifting process: (1) Project polytope $\polytope$ to the orthogonal complement of the flat spanned by $\spineset$ (vertices marked by prominent dots) to obtain shadow $\fold{\polytope}$. (2) Star-triangulate $\fold{\polytope}$ with respect to the origin. (3) Lift star triangulation of $\fold{\polytope}$ to obtain $\spineset$-spinal triangulation of $\polytope$. Note: every facet of $\polytope$ contains exactly $\abs{\spineset}-1$ points of $\spineset$ in both examples.}
\label{fig:example_lifting_process}
\end{figure}
An important consequence of our characterization is that a spine
allows us to relate the volumes
of a convex polytope $\polytope$ and its shadow $\fold{\polytope}$
(with respect to that spine)
by a precise equation.
An application of this observation leads to our second result:
an exact volume formula of an important
polytope arising in number theory which we call the \emph{Everest polytope}.
We show that this polytope is the shadow of a higher-dimensional 
\emph{simplotope}, the product of simplices, whose volume is easy
to determine.

\paragraph{Number theoretic background.}
In the following we briefly discuss the number-theoretic background of the Everest polytope. G.~R. Everest \cite{Everest:1989,Everest:1990} studied various counting problems related to Diophantine equations. 
In particular, he proved asymptotic results for the number of values 
taken by a linear form whose variables are restricted to lie 
inside a given finitely generated subgroup of a number field. 
This includes norm form- and discriminant form equations, 
normal integral bases and related objects. 
Everest's work contains important contributions 
to the quantitative theory of $S$-unit equations 
and makes use of Baker's theory of linear forms in logarithms 
and Schmidt's subspace theorem from Diophantine approximation; 
see for instance \cite{Schlickewei:1990a,EvertseSchlickewei:2002,BakerWuestholz:1993}. 
Later, other authors \cite{FreiTichyZiegler:2014} applied the methods of Everest
to solve combinatorial problems in algebraic number fields. The corresponding counting results involve 
various important arithmetic constants, 
one of them being the volume of a certain convex polytope. 

In order to introduce Everest's constant, 
we use basic facts from algebraic number theory. 
Let $K$ be a number field, $N=N_{K\slash\mathbb{Q}}$ the field norm 
and $S$ a finite set of places of $K$ including the archimedian ones. 
We denote by $O_{K,S}=\{\alpha\in K:\lvert \alpha\rvert_{v}\leq 1\text{ for all }v\not\in S\}$ 
the ring of $S$-integers and its unit group by $U_{K,S}$; 
the group of $S$-units. 
Let $c_{0},\ldots , c_{n}$ denote given non-zero algebraic numbers. 
During the last decades, a lot of work is devoted
 to the study of the values taken by the expression 
$c_{0}x_{0}+\ldots +c_{n}x_{n}$, where the $x_{n}$ are allowed to run 
through $U_{K,S}$, see for instance \cite{Nishioka:1994,GyoryYu:2006}. A specific instance of this kind of general $S$-unit equations 
is the following combinatorial problem. 
As usual, two $S$-integers $\alpha$ and $\beta$ are said 
to be associated (for short $\alpha \sim \beta$) 
if there exists an $S$-unit $\epsilon$ such that $\alpha = \beta\epsilon$. 
It is well-known that the group of $S$-units $U_{K,S}$ is a free abelian group with $s=\lvert S\rvert-1$ generators, $\omega_{K}$ and $\operatorname{Reg}_{K,S}$ denote as usual the number of roots of unity and the $S$-regulator of $K$, respectively (for the basic concepts of algebraic number theory see \cite{Neukirch:1999}). 
Then for given $n\in \mathbb{N}, q>0$ the counting function $u(n,q)$ 
is defined as the number of equivalence classes $[\alpha]_{\sim}$ such that

$$N(\alpha)\ce \prod_{v\in S}\lvert\alpha\rvert _{v}\leq q,\; \alpha=\sum^{n}_{i=1}\varepsilon_{i},$$

\noindent
where $\varepsilon_{i}\in U_{K,S}$ and no subsum of $\varepsilon_{1}+\ldots +\varepsilon_{n}$ vanishes. From the work of Everest \cite{Everest:1989,Everest:1990}, the following asymptotic formula can be derived:
\begin{align*}
u(n,q)=\frac{c(n-1,s)}{n!}\left(\frac{\omega_{K}(\log q)^{s}}{\operatorname{Reg}_{K,S}}\right)^{n-1}+ o((\log q)^{(n-1)s-1+\varepsilon})
\end{align*}

\noindent
for arbitrary $\varepsilon>0$. Here $c(n-1,s)$ is a positive constant, and its exact value has been known only in special cases; see \cite{BarroeroFreiTichy:2011} for more details. In general, $c(n,s)$ is given as the volume of a convex polytope in~$\mathbb{R}^{ns}$
that we define and study in Section~\ref{sec:everest_polytope}.
Our results show that
\begin{align*}
c(n,s)= \frac{1}{(s!)^{n+1}}\frac{((n+1)s)!}{(ns)!},
\end{align*}
\noindent
which can also be written in terms of a multinomial coefficient as $\binom{(n+1)s}{s,\ldots,s}\frac{1}{(ns)!}$.

\paragraph{Geometric background.}
Our results fall into the category of \emph{constrained triangulations
of convex polytopes}. Triangulations of polytopes
are a classic topic in discrete geometry;
an infamous question is the quest for triangulating a $d$-hypercube with
a minimal number of simplices~\cite{CroftFalconerGuy:1991}. 
Precise answers are only known
up to dimension $7$ using computer-assisted proofs~\cite{HughesAnderson:1996}.
A contemporary overview on results relating to triangulations
of polytopes and more general point configurations is provided
by de~Loera, Rambau, and Santos~\cite{LoeraRambauSantos:2010}.
It includes a discussion on the triangulation of simplotopes, 
a geometric object whose study 
goes back to Hadwiger~\cite{Hadwiger:1957},
and has been studied, for instance, in the context 
of combinatorics~\cite{Freund:1986},
game theory~\cite{LaanTalman:1982}
and algebraic geometry~\cite[Ch.~7]{GelfandKapranovZelevinsky:2008}.
Simplotopes admit a standard triangulation,
the so-called \emph{staircase triangulation}, which can easily be described
in combinatorial terms. A by-product of our results is that simplotopes
can also be triangulated by a family of spinal triangulations.

An $\spinesetcard$-element subset $\spineset$ of the vertex set of a polytope 
with the property that each facet contains exactly $\spinesetcard-1$ points of $\spineset$
is called a \emph{special simplex} in the literature.
Special simplices
have been studied by Athanasiadis~\cite{Athanasiadis:2005}
to relate the Ehrhart polynomial of integer polytopes with special simplex
with the h-vector of the shadow. This results found applications 
in the study of toric rings and Gorenstein polytopes~\cite{HibiOhsugi:2006,BrunsRoemer:2007}. Polytopes with special simplices are further studied
by de Wolff~\cite{Wolff:UP}. His classification yields, among other results,
upper bounds for the number of faces of a polytope with special simplices.
Remarkably, special simplices with two vertices, called \emph{spindles},
are also used by Santos for his celebrated
counterexample for the Hirsch conjecture~\cite{Santos:2012}.
While these works employ similar techniques as our work, for instance,
lifting triangulations of the shadow to triangulations of the polytope,
the (more elementary) questions of this paper are not addressed in the related
work. We also point out that despite the close relation to special simplices,
the notion of spines introduced in this paper
is slightly more general because a facet is allowed to
contain all vertices of $\spineset$.

Otherwise, 
constraining triangulations has mostly been considered for low-dimensional
problems under an algorithmic angle. For instance, 
a \emph{constrained Delaunay triangulation} is a triangulation
which contains a fixed set of pre-determined simplices; apart from these
constraints, it tries to be ``as Delaunay as possible''; see
Shewchuk's work~\cite{Shewchuk:2008} for details. While our work is related
in spirit, there appears to be no direct connection to this framework,
because our constraint does not only ensure the presence of certain simplices
in the triangulation, but rather constrains all $d$-simplices at once.

Computing volumes of high-dimensional 
convex polytopes is another notoriously hard problem, 
from a computational perspective~\cite[Sec.~13]{Matousek:2002}\cite{EmirisFisikopoulos:2014} as well as
for special cases. A famous example is the \emph{Birkhoff polytope} of all doubly-stochastic $n\times n$-matrices,
whose exact volume is known exactly only up to $n=10$~\cite{Pak:2000,LoeraLiuYoshida:2009};
see Section~\ref{sec:birkhoff} for details.
Our contribution provides a novel technique to compute volumes
of polytopes through lifting into higher dimensions.
We point out that lifting increases the dimension, so that the lift
of a polytope is not the image of a linear transformation.
Therefore, the well-known formula
$\vol(A\polytope)=\sqrt{\det(A^TA)}\,\vol(\polytope)$
with $A\in\R^{e\times d}$ and $e\geq d$ does not apply to our case.
Another interpretation of our technique is to relate the volume
of a polytope and the volume of its (spinal) projection. 
For a fixed normal vector $\vector{u}$, the volume $\vol_\vector{u}$
of the projection of a polytope to the hyperplane normal to $\vector{u}$
is given by
\[\vol_{\vector{u}}=\frac{1}{2}\sum_{i=1}^m \abs{\vector{u}\tilde{\vector{u}}_i},\]
where $\tilde{\vector{u}}_1,\ldots,\tilde{\vector{u}}_m$ are the outward normal vectors
of the facets of the polytope
scaled by their volume~\cite[Thm~1.1]{BurgerGritzmannKlee:1996}.
The cited paper also discusses the algorithmic problem of finding
the projection vector that yields the maximal and minimal projected volume.
More results relating the volume of a polytope with its projections
(in codimension $1$) are studied 
under the name of \emph{geometric tomography}~\cite{Gardner:2006};
see also~\cite{Schneider:2014}.
We do not see a simple way
to derive the volume of a spinal projection using these approaches,
partially because our result relates the volume of polytopes 
of larger codimension.

\paragraph{Organization.}
We start by introducing the basic concepts from convex geometry 
in Section~\ref{sec:geometry}. 
We proceed with our structural result on spinal triangulations,
in Section~\ref{sec:characterization}.
We calculate the vertices of the Everest
polytope in Section~\ref{sec:everest_polytope} 
and define a map
from a simplotope to the Everest polytope 
in Section~\ref{sec:projections_of_simplotopes},
leading to the volume formula for the Everest polytope.
A case study of applying our results to the Birkhoff polytope
is given in Section~\ref{sec:birkhoff}.
We conclude with some additional remarks in Section~\ref{sec:conclusions_and_remarks}.

\smallskip

A conference version of this paper appeared at the 
International Symposium on Computational Geometry (SoCG) 2017~\cite{KerberTichyWeitzer:2017}.
This extended version contains the missing proof details that were omitted
in the conference version for brevity. Moreover, Section~\ref{sec:birkhoff}
did not appear in that previous version.

%%%%%%%%%%%%%%%%%%%%%%%%%%%%%%%%%%%%%%%%%%%%%%%%%%%%%%%%%%%%

\section{Geometric concepts}
\label{sec:geometry}
Let $M$ be an arbitrary subset of $\R^d$ with some integer $d\geq 1$. The \emph{dimension} of $M$ is the dimension of the smallest affine subspace of $\R^d$ containing $M$.
We say that $M$ is \emph{full-dimensional}
if its dimension is equal to~$d$. Throughout the entire paper, $\pointset$ will always stand for a finite point set in $\R^d$ that is full-dimensional and in \emph{convex position}, that is $x\notin\conv(\pointset\setminus\{x\})$
for every $x\in\pointset$, where $\conv(\cdot)$ 
denotes the \emph{convex hull} in~$\R^d$.

\paragraph{Polytopes and simplicial complexes.}
We use the following standard definitions 
(compare, for instance, Ziegler~\cite{Ziegler:2007}):
A \emph{polytope} $\polytope$ is the convex hull of a finite point set in $\R^d$ in which case we say that the point set \emph{spans} $\polytope$.
A hyperplane $\hyperplane\subseteq\R^d$ is called \emph{supporting} (for $\polytope$) if $\polytope$ is contained in one of the closed half-spaces induced by $\hyperplane$.
A \emph{face} of $\polytope$ is either $\polytope$ itself, or the intersection of $\polytope$ with a supporting hyperplane.
If a face is neither the full polytope nor empty, we call it \emph{proper}.
A face of dimension $\ell$ is also called $\ell$-face of $\polytope$, with the convention that the empty set is a $(-1)$-face.
We call the union of all proper faces of $\polytope$ the \emph{boundary} of $\polytope$, and the points of $\polytope$ not on the boundary
the \emph{interior} of $\polytope$. 
$0$-faces are called the \emph{vertices} of $\polytope$, and we let $\SoVertices(\polytope)$ denote the set of vertices.
With $\ell$ being the dimension of $\polytope$, we call $(\ell-1)$-faces \emph{facets}, and $(\ell-2)$-faces \emph{ridges} of $\polytope$.
Any face $\face$ of $\polytope$ is itself a polytope whose vertex set is $\SoVertices(\polytope)\cap\face$.

It is well-known that every point $\vector{p}\in\polytope$ (and only those)
can be written as a \emph{convex combination} of vertices of $\polytope$, 
that is $\vector{p}=\sum_{\vector{v}\in\SoVertices(\polytope)} \lambda_{\vector{v}} \vector{v}$ 
with real values $\lambda_\vector{v}\geq 0$ for all $\vector{v}$ 
and $\sum_{\vector{v}\in\SoVertices(\polytope)}\lambda_\vector{v}=1$.
By Carath\'{e}odory's theorem, there exists a convex combination with at most $d+1$ non-zero entries, that is,
$\vector{p}=\sum_{i=1}^{d+1} \lambda_i \vector{v}_i$ with $\vector{v}_i\in\SoVertices(\polytope)$, $\lambda_i\geq 0$ and $\sum\lambda_i=1$.

\smallskip

An \emph{$\ell$-simplex} $\sigma$ with $\ell\in\set{-1,\ldots,d}$ is a polytope of dimension $\ell$ that has exactly $\ell+1$ vertices.
Every point in a simplex is determined by a unique convex combination of the vertices.
A \emph{simplicial complex} $\complex$ in $\R^d$ is a set of simplices in $\R^d$ 
such that for a simplex $\sigma$ in $\complex$, all faces of $\sigma$ are in $\complex$ as well, 
and if $\sigma$ and $\tau$ are in $\complex$, the intersection $\sigma\cap\tau$ is a common face of both (note that the empty set is a face of any polytope).
We let $\SoVertices(\complex)$ denote the set of all vertices in $\complex$.
The \emph{underlying space} $\underlyingspace{\complex}$ of $\complex$ is the union of its simplices.
We call a simplex in $\complex$ \emph{maximal} if it is not a proper face of another simplex in $\complex$.
A simplicial complex equals the set of its maximal simplices together with all their faces
and is therefore uniquely determined by its maximal simplices.
Also, the underlying space of $\complex$ equals the union of its maximal simplices. 

\smallskip

In what follows, we let $\polytope\ce \conv(\pointset)$ be the polytope spanned by $\pointset$ as fixed above.
In particular, $\dim(\polytope)=d$ because $\pointset$ is full-dimensional and $\SoVertices(\polytope)=\pointset$
because $\pointset$ is in convex position.

\paragraph{Spines.}
We call $\spineset\subseteq\pointset$ with $\abs{\spineset}=\spinesetcard$
a \emph{spine} of $\pointset$
if each facet of $\polytope$ contains at least $\spinesetcard-1$ points of $\spineset$.
Trivially, a one-point subset of $\pointset$ is a spine.
If $\polytope$ is a simplex, any non-empty subset
of vertices is a spine.
For a hypercube, every pair of opposite vertices forms a spine, 
but no other spines with two or more elements exist.

We derive an equivalent geometric characterization of spines next.
The \emph{$\spineset$-span (in $\pointset$)} 
is the set of all $d$-simplices $\sigma$
satisfying $\spineset\subseteq \SoVertices(\sigma)\subseteq\pointset$. Equivalently, it is the set of all $d$-simplices
with vertices in $\pointset$ which have $\conv(\spineset)$ as a common face. 
Clearly, each simplex $\sigma$ of the $\spineset$-span 
is contained in $\polytope$, 
and the same is true for the union of all simplices in the $\spineset$-span. 

\begin{lemma}
\label{lem:spine_facet_equiv}
Let $\spineset\subseteq\pointset$ with $\abs{\spineset}=\spinesetcard$. 
Then, $\spineset$ is a spine of $\pointset$ if and only 
if the union of all $\spineset$-span simplices is equal to $\polytope$, 
that is, 
if every point in $\polytope$ belongs to at least one simplex in the $\spineset$-span. 
\end{lemma}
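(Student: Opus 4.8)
The statement is an ``if and only if'', so I would prove the two directions separately, and I expect the forward direction (spine $\Rightarrow$ covering) to be the substantive one. For the easy direction, suppose the $\spineset$-span simplices cover $\polytope$. Given a facet $\facet$ of $\polytope$, pick a point $\vector{p}$ in the relative interior of $\facet$; it lies in some $\spineset$-span simplex $\sigma$, and since $\sigma\subseteq\polytope$ and $\vector{p}$ is on the boundary, $\vector{p}$ lies in a proper face $\tau$ of $\sigma$ contained in $\facet$. The vertices of $\tau$ lie in $\SoVertices(\polytope)\cap\facet$, and because $\vector{p}$ is in the relative interior of $\facet$, the face $\tau$ must span $\facet$, hence have at least $d$ vertices. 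Since $\tau$ is a proper face of the $d$-simplex $\sigma$, it omits at least one vertex of $\sigma$; but $\sigma$ has $\spinesetcard$ of its $d+1$ vertices in $\spineset$, so $\tau$ retains at least $\spinesetcard-1$ of them, giving $\abs{\spineset\cap\facet}\ge\spinesetcard-1$. Thus $\spineset$ is a spine.

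For the forward direction I would argue by contraposition on the covering, or more constructively: assume $\spineset$ is a spine and take an arbitrary $\vector{p}\in\polytope$; I must exhibit a $d$-simplex $\sigma$ with $\spineset\subseteq\SoVertices(\sigma)\subseteq\pointset$ and $\vector{p}\in\sigma$. The natural approach is induction on $\spinesetcard$. The base case $\spinesetcard=1$ is the classical fact that a star triangulation from any vertex covers the polytope (equivalently, $\vector{p}$ lies in the cone from a fixed vertex $\vector{v}$ over some facet not containing $\vector{v}$; by Carath\'eodory applied within that facet, $\vector{p}$ lies in a $d$-simplex through $\vector{v}$). For the inductive step, fix one point $\vector{u}\in\spineset$ and consider $\spineset':=\spineset\setminus\{\vector{u}\}$. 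The key geometric move is to reduce to a lower-dimensional situation: look at the ``link'' or the central projection from $\vector{u}$, or alternatively intersect $\polytope$ with a hyperplane. Concretely, I expect the cleanest route is: since $\vector{u}$ is a vertex, the star of $\vector{u}$ covers $\polytope$, so $\vector{p}$ lies in the convex hull of $\vector{u}$ and some facet $\facet$ not containing $\vector{u}$; that is, $\vector{p}=\lambda\vector{u}+(1-\lambda)\vector{q}$ for some $\vector{q}\in\facet$, $\lambda\in[0,1]$. Now $\facet$ is a polytope of dimension $d-1$, and because $\spineset$ is a spine with $\vector{u}\notin\facet$, every facet of $\polytope$ other than $\facet$ contains at least $\spinesetcard-1$ points of $\spineset$, of which at least $\spinesetcard-2$ lie in $\spineset'$; intersecting with $\facet$, I want to show $\spineset'\subseteq\facet$ and that $\spineset'$ is a spine of $\SoVertices(\facet)$ inside the hyperplane spanned by $\facet$. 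Then by induction $\vector{q}$ lies in a $(d-1)$-simplex $\tau\subseteq\facet$ with $\spineset'\subseteq\SoVertices(\tau)\subseteq\SoVertices(\facet)$, and $\sigma:=\conv(\{\vector{u}\}\cup\SoVertices(\tau))$ is the desired $d$-simplex containing $\vector{p}$.

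The main obstacle, and the step that needs care, is verifying that the restriction $\spineset'$ really is a spine of the facet $\facet$ — i.e. that (i) $\spineset'\subseteq\SoVertices(\facet)$ and (ii) every facet of the $(d-1)$-polytope $\facet$ contains at least $\spinesetcard-2$ points of $\spineset'$. For (i): if some $\vector{w}\in\spineset'$ were not on $\facet$, then the two points $\vector{u},\vector{w}\in\spineset$ both lie off $\facet$, so $\facet$ contains at most $\spinesetcard-2<\spinesetcard-1$ points of $\spineset$, contradicting the spine hypothesis; hence $\spineset'\subseteq\facet$, and since $\spineset'\subseteq\pointset=\SoVertices(\polytope)$ and $\spineset'\subseteq\facet$, each element of $\spineset'$ is a vertex of $\facet$. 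For (ii), a facet $\ridge$ of $\facet$ is a ridge of $\polytope$, hence $\ridge=\facet\cap\facet'$ for some other facet $\facet'$ of $\polytope$; then $\facet'$ contains at least $\spinesetcard-1$ points of $\spineset$, and since $\vector{u}\notin\facet\supseteq\ridge$ we lose at most the point $\vector{u}$ when passing to $\ridge=\facet\cap\facet'$, so $\ridge$ contains at least $\spinesetcard-2$ points of $\spineset$, all of which lie in $\spineset'$ because $\spineset'\subseteq\facet$ forces $\spineset\cap\ridge=\spineset'\cap\ridge$. (One should double-check the edge case $\ridge$ when $\facet'$ happens to contain all of $\spineset$, but then $\ridge$ still contains $\spineset\setminus\{\vector{u}\}=\spineset'$, which has $\spinesetcard-1\ge\spinesetcard-2$ points, so this case is fine too.) With (i) and (ii) established the induction goes through, completing the forward direction and hence the lemma.
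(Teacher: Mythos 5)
Your forward direction (spine $\Rightarrow$ covering) matches the paper's proof in all essentials: induction on $\spinesetcard$, project from a spine point $\vector{u}$ onto a facet $\facet$ not containing $\vector{u}$, and verify that $\spineset\setminus\{\vector{u}\}$ is a spine of $\facet$ via the ridges $\ridge=\facet\cap\facet'$. (The paper starts the induction at $\spinesetcard=0$ via Carath\'eodory rather than $\spinesetcard=1$, and packages the restriction step as a separate Lemma~\ref{lem:spine_of_faces}, but the argument is the same.)

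Your backward direction has a genuine gap. You pick $\vector{p}$ in the relative interior of $\facet$, take a $\spineset$-span simplex $\sigma\ni\vector{p}$, and assert that $\tau:=\sigma\cap\facet$ ``must span $\facet$'' because $\vector{p}$ is interior to $\facet$. That inference is false for a particular $\vector{p}$: take $\polytope=[0,1]^3$, $\spineset=\{(0,0,1),(1,1,0)\}$ (a space diagonal, hence a spine), $\facet$ the top face $z=1$, and $\vector{p}=(\tfrac12,\tfrac12,1)$. The tetrahedron $\sigma=\conv\{(0,0,1),(1,1,1),(1,1,0),(0,1,0)\}$ is in the $\spineset$-span and contains $\vector{p}$ (it is the midpoint of the edge $(0,0,1)(1,1,1)$), yet $\sigma\cap\facet$ is just that edge, which is $1$-dimensional and has only $2<d$ vertices. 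Your subsequent count ``$\tau$ omits exactly one vertex of $\sigma$'' therefore does not go through for this $\sigma$. Two standard fixes: (a) choose $\vector{p}$ \emph{generically} in the relative interior of $\facet$, i.e.\ avoiding the finitely many affine subspaces of $\operatorname{aff}(\facet)$ spanned by at most $d-1$ points of $\pointset\cap\facet$, which forces any simplex with vertices in $\pointset\cap\facet$ containing $\vector{p}$ to be $(d-1)$-dimensional; or (b) argue by contraposition as the paper does: if some facet $\facet$ contains fewer than $\spinesetcard-1$ points of $\spineset$, then every $\spineset$-span simplex has at least two vertices off $\facet$, hence meets $\facet$ in a face of dimension at most $d-2$, and finitely many such sets cannot cover the $(d-1)$-dimensional $\facet$. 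Either repair makes your argument correct; the paper's version (b) is the cleaner of the two.
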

\begin{proof}
We prove both directions of the equivalence separately. 
For ``$\Rightarrow$'', we proceed by induction on $\spinesetcard$. The statement is true for $\spinesetcard=0$ by Carath\'{e}odory's theorem. 
Let $\spineset$ be a set with at least one element, $\vector{u}\in\spineset$ arbitrary, and $\vector{p}\in\polytope\setminus\{\vector{u}\}$.
The ray starting in $\vector{u}$ through $\vector{p}$ leaves the polytope in a point $\bar{\vector{p}}$, 
and this point lies on (at least) one facet $\facet$ of $\polytope$ that does not contain $\vector{u}$. 
By assumption, $\facet$ contains all points in $\bar{\spineset}\ce\spineset\setminus\{\vector{u}\}$.
We claim that $\bar{\spineset}$ is a spine of $\bar{\pointset}\ce\pointset\cap\facet$.
To see that, note that $\facet$ is spanned by $\bar{\pointset}$
and the facets of $\facet$ (considered as a polytope in $\R^{d-1}$) are the ridges of $\polytope$ contained in $\facet$.
Every such ridge $\ridge$ is the intersection of $\facet$ and another facet $\facet'$ of $\polytope$. 
By assumption, $\facet'$ also contains at least $\spinesetcard-1$ points of $\spineset$
and it follows at once that $\ridge$ contains $\spinesetcard-2$ points of $\bar{\spineset}$.
So, $\bar{\spineset}$ is a spine of $\bar{\pointset}$,
and by induction hypothesis, there exists a $(d-1)$-simplex $\bar{\sigma}$ in the $\bar{\spineset}$-span in $\bar{\pointset}$ that contains $\bar{\vector{p}}$.
The vertices of $\bar{\sigma}$ together with $\vector{u}$ span a simplex $\sigma$ that contains $\vector{p}$
and $\sigma$ is in the $\spineset$-span by construction.

The direction ``$\Leftarrow$'' is clear if $\spinesetcard\in\set{0,1}$, so we may assume that $\spinesetcard\geq2$ and proceed by contraposition.
If $\spineset$ is not a spine, we have a facet $\facet$ of $\polytope$ 
such that less than $\spinesetcard-1$ points of $\spineset$ lie on $\facet$. 
Then, every simplex $\sigma$ in the $\spineset$-span has at least $2$ vertices not on $\facet$, and therefore
at most $d-1$ vertices on $\facet$. This implies that $\sigma\cap\facet$ is at most $(d-2)$-dimensional. 
Therefore, the (finite) union of all $\spineset$-span simplices cannot cover the $(d-1)$-dimensional facet $\facet$, which means that the $\spineset$-span
is not equal to $\polytope$.
\end{proof}

From now on, we 
use the (equivalent) geometric characterization from the preceding lemma and the combinatorial definition
of a spine interchangeably.
A useful property is that spines extend to faces in the following sense.

\begin{lemma}
\label{lem:spine_of_faces}
Let $\spineset$ be a spine of $\pointset$, and let $\face$ be an $\ell$-face of $\polytope$.
Then $\bar{\spineset}\ce\spineset\cap\face$ is a spine of $\bar{\pointset}\ce\pointset\cap\face$,
both considered as point sets in $\R^\ell$. In particular, $\face$ contains at least $\spinesetcard-(d-\ell)$ points of $\spineset$.
\end{lemma}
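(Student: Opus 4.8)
The plan is to induct on the codimension $c\ce d-\ell$ of $\face$, reducing the general statement to the single step from a polytope to one of its facets. The base case $c=0$ is trivial, since then $\face=\polytope$ and $\bar\spineset=\spineset$. Throughout I use the fact recalled in Section~\ref{sec:geometry} that the vertex set of a face $\face$ of $\polytope$ equals $\pointset\cap\face$, so that it is meaningful to ask whether $\bar\spineset=\spineset\cap\face$ is a spine of $\bar\pointset=\pointset\cap\face$ as a point set in $\R^\ell$ (these are in convex position and span their $\ell$-dimensional affine hull).

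The heart of the argument is the following \emph{codimension-one claim}: if $\spineset$ is a spine of $\pointset$ and $\facet$ is a facet of $\polytope$, then $\spineset\cap\facet$ is a spine of $\pointset\cap\facet$, and moreover $\abs{\spineset\cap\facet}\geq\spinesetcard-1$. To see it, first note that since $\spineset$ is a spine, $\facet$ contains either all of $\spineset$ or all but exactly one of its points: two or more missing points would leave at most $\spinesetcard-2$ points of $\spineset$ on $\facet$, contradicting the spine property; in particular $\abs{\spineset\cap\facet}\geq\spinesetcard-1$. Recall that the facets of $\facet$, viewed as a polytope in $\R^{d-1}$, are exactly the ridges $\ridge=\facet\cap\facet'$ of $\polytope$ lying in $\facet$, where $\facet'$ ranges over the remaining facets of $\polytope$, and that each such $\facet'$ contains at least $\spinesetcard-1$ points of $\spineset$. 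If $\facet\supseteq\spineset$, then $\spineset\cap\ridge=\spineset\cap\facet'$ has at least $\spinesetcard-1=\abs{\spineset\cap\facet}-1$ points, so $\spineset\cap\facet=\spineset$ is a spine of $\facet$. If instead $\facet$ misses a single point $\vector u\in\spineset$, put $\bar\spineset\ce\spineset\setminus\set{\vector u}$; then $\spineset\cap\ridge=\bar\spineset\cap\facet'$ and $\abs{\bar\spineset\cap\facet'}\geq\abs{\spineset\cap\facet'}-1\geq\spinesetcard-2=\abs{\bar\spineset}-1$, so $\bar\spineset$ is a spine of $\facet$. This last computation is precisely the one carried out in the ``$\Rightarrow$'' part of the proof of Lemma~\ref{lem:spine_facet_equiv}, which I would simply cite.

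For the inductive step with $c\geq1$: every proper face $\face$ of $\polytope$ is contained in some facet $\facet$, and $\face$ is then a face of the polytope $\facet$ of the same dimension $\ell$, hence of codimension $c-1$ in $\facet$. By the codimension-one claim, $\spineset\cap\facet$ is a spine of $\pointset\cap\facet$; applying the induction hypothesis inside $\facet$ yields that $\spineset\cap\face=(\spineset\cap\facet)\cap\face$ is a spine of $\pointset\cap\face=(\pointset\cap\facet)\cap\face$, together with $\abs{\spineset\cap\face}\geq\abs{\spineset\cap\facet}-(c-1)$. Combining this with $\abs{\spineset\cap\facet}\geq\spinesetcard-1$ gives $\abs{\spineset\cap\face}\geq\spinesetcard-c=\spinesetcard-(d-\ell)$, which is the ``in particular'' part.

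The only place that needs care is the codimension-one claim, and within it the ridge count $\spineset\cap\ridge=\bar\spineset\cap\facet'$: a ridge $\ridge$ of $\polytope$ inside $\facet$ retains every point of $\bar\spineset$ that $\facet'$ carries, and one only loses a point when the already-removed $\vector u$ happens to lie on $\facet'$. This bookkeeping is identical to the one in Lemma~\ref{lem:spine_facet_equiv}, so beyond it the proof amounts to organizing the induction and disposing of the easy sub-case where $\facet$ contains all of $\spineset$.
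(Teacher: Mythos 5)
Your proof is correct, and it takes a genuinely different route from the paper's. The paper works directly from the geometric characterization established in Lemma~\ref{lem:spine_facet_equiv}: it intersects each $\spineset$-span simplex $\sigma$ with $\face$, observes that these intersections cover $\face$ and contain $\bar\spineset$, pads them up to $\ell$-simplices in the $\bar\spineset$-span of $\bar\pointset$, and reads off the result for arbitrary codimension in one step (only the ``in particular'' clause uses downward induction on $\ell$). You instead stay entirely at the combinatorial level of the spine definition, isolate a codimension-one claim about facets and ridges, and chain it by induction on codimension; the claim itself is the ridge-counting argument that already appears inside the ``$\Rightarrow$'' direction of Lemma~\ref{lem:spine_facet_equiv}, which you correctly cite rather than re-derive. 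Your approach is more elementary in that it never invokes the covering equivalence of Lemma~\ref{lem:spine_facet_equiv}, only the facet-count definition; the trade-off is the induction and the slight overhead of checking that the transitivity of faces (a face of $\polytope$ inside a facet $\facet$ is a face of $\facet$) and the correspondence $\SoVertices(\facet)=\pointset\cap\facet$ line up, which you do. One small refinement: your inductive step tacitly quantifies over all polytopes, since you pass from $\polytope$ to $\facet$; stating the induction hypothesis as ``for all polytopes, spines, and faces of codimension $<c$'' would make this explicit. The paper's argument is shorter once Lemma~\ref{lem:spine_facet_equiv} is in hand, but yours is a perfectly good self-contained alternative.
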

\begin{proof}
For every simplex $\sigma$ in the $\spineset$-span, let $\bar{\sigma}\ce\sigma\cap\face$.
Clearly, $\bar{\sigma}$ is itself a simplex, spanned by the vertices $\SoVertices(\bar{\sigma})=\SoVertices(\sigma)\cap\face$,
and is of dimension at most $\ell$.
Because $\spineset$ is a spine of $\pointset$, 
the union of all $\bar{\sigma}$ covers $\face=\conv(\bar{\pointset})$.
Moreover, $\SoVertices(\bar{\sigma})$ contains $\bar{\spineset}$.
If $\bar{\sigma}$ is not of dimension $\ell$, we can find an $\ell$-simplex in the $\bar{\spineset}$-span of $\bar{\pointset}$
that has $\bar{\sigma}$ as a face just by adding suitable vertices from $\bar{\pointset}$.
This implies that the union of the $\bar{\spineset}$-span covers $\face$. The ``in particular'' part follows by downward induction on $\ell$.
\end{proof}

\paragraph{Star triangulations.}
Let $\pointsetnonconvex\subseteq\R^d$ be a finite point set that is full-dimensional, but not necessarily in convex position.
We call a simplicial complex $\complex$ a \emph{triangulation} of $\pointsetnonconvex$ 
if $\SoVertices(\complex)=\pointsetnonconvex$ and $\underlyingspace{\complex}=\conv(\pointsetnonconvex)$. 
In this case, we also call $\complex$ a triangulation of the polytope $\conv(\pointsetnonconvex)$.
In a triangulation of $\pointsetnonconvex$, every maximal simplex must be of dimension $d$. 

We will consider several types of triangulations in this paper. For the first type, we assume that $\pointsetnonconvex=\pointset\cup\{\vector{0}\}$,
where $\vector{0}=(0,\ldots,0)\notin\pointset$, $\pointset$ is in convex position (as fixed before), and either $\vector{0}$ lies in $\conv(\pointset)$ 
or $\pointsetnonconvex$ is in convex position as well.
We define a \emph{star triangulation} of $\pointsetnonconvex$ as a triangulation where all $d$-simplices contain $\vector{0}$ as a vertex
(Figure~\ref{fig:star_triangulation}).

\begin{figure}[H]
\centering
\includegraphics[width=8cm]{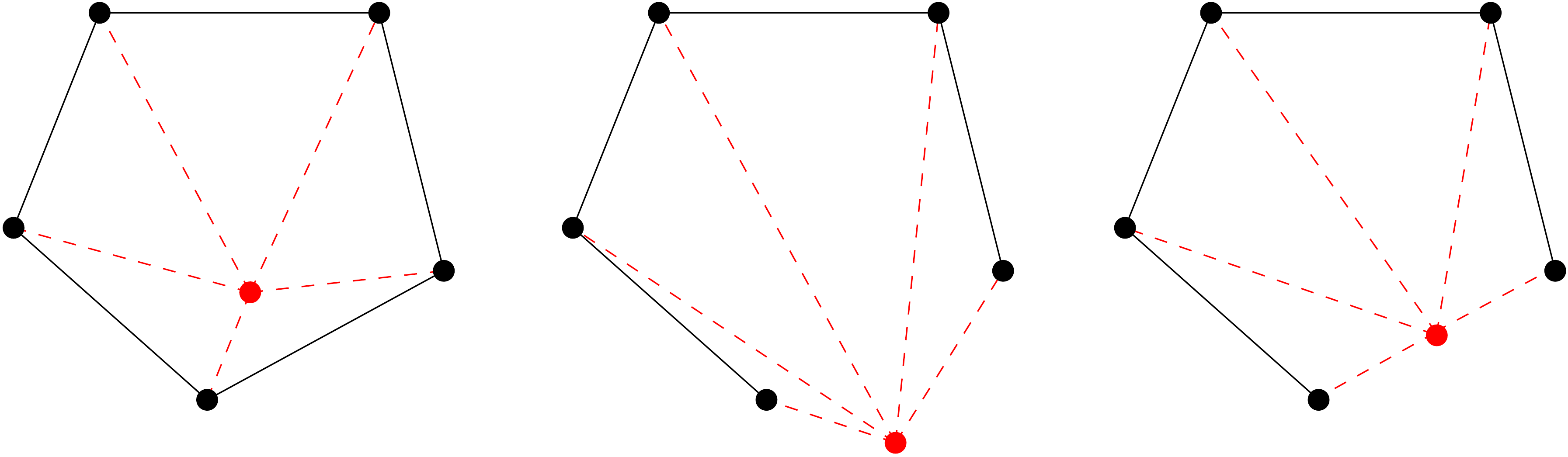}
\caption{The three types of star triangulations.}
\label{fig:star_triangulation}
\end{figure}

\begin{lemma}
\label{lem:star_exists}
A star triangulation of $\pointsetnonconvex$ exists.
\end{lemma}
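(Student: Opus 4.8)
The plan is to prove the existence of a star triangulation by a standard ``cone over a boundary triangulation'' argument, handling the three cases from the figure uniformly where possible. First I would triangulate the boundary $\partial\polytope$ where $\polytope\ce\conv(\pointset)$, so I need a triangulation $\complex_\partial$ of $\partial\polytope$ whose vertex set is exactly $\pointset$ (no new vertices). The cleanest way to get this is to pick any triangulation of $\polytope$ using only vertices in $\pointset$ (which exists by a routine placing/pulling argument, or can itself be obtained by induction on $d$), and restrict it to the facets; alternatively, one recursively triangulates each facet of $\polytope$, which is a $(d-1)$-polytope spanned by $\pointset\cap\facet$, using the inductive hypothesis in dimension $d-1$, and then checks that on each ridge the induced triangulations from the two adjacent facets agree --- this compatibility is the one slightly delicate point and is most easily sidestepped by taking a single global \emph{pulling triangulation} of $\polytope$ (pull an arbitrary fixed vertex order) and taking its boundary, since a pulling triangulation restricts to a pulling triangulation on every face.

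Given such a boundary triangulation $\complex_\partial$, I would then form the cone with apex $\vector{0}$: set
\[
\complex \ce \complex_\partial \cup \set{\conv(\set{\vector{0}}\cup\SoVertices(\tau)) : \tau\in\complex_\partial} \cup \set{\text{all faces of these cones}}.
\]
I then need to verify three things. (i) $\complex$ is a simplicial complex: each $\conv(\set{\vector0}\cup\SoVertices(\tau))$ is a genuine $d$-simplex because $\vector0$ is affinely independent from the hyperplane spanned by a facet-simplex $\tau$ (true in all three cases --- if $\vector0$ is interior to $\polytope$, or a vertex of $\pointsetnonconvex$, it does not lie on any supporting hyperplane of a facet), and two such cones intersect in the cone over $\tau\cap\tau'$, which is a common face since $\tau\cap\tau'$ is a common face of $\tau,\tau'$ in $\complex_\partial$; a cone and a boundary simplex intersect in a face of $\tau$, hence a face of both. (ii) $\SoVertices(\complex)=\pointsetnonconvex$: the boundary contributes exactly $\pointset$ and the apex contributes $\vector0$. (iii) $\underlyingspace\complex=\conv(\pointsetnonconvex)$: when $\vector0\in\conv(\pointset)$ this is $\polytope$ and the cones over the facets exactly tile it (every point of $\polytope\setminus\set{\vector0}$ lies on a unique ray from $\vector0$ which hits $\partial\polytope$ in some $\tau$); when $\pointsetnonconvex$ is in convex position, $\conv(\pointsetnonconvex)$ is the union of $\polytope$ with the cone from $\vector0$ over the ``lower'' facets visible from $\vector0$, and one checks $\complex$ covers exactly this --- here it is convenient to note $\partial\polytope$ splits into the part visible from $\vector0$ and the part not visible, and coning the visible part outward plus the non-visible part is irrelevant, so in fact it is cleaner to only cone the facets \emph{not} visible from $\vector0$, i.e. those whose supporting halfspace contains $\vector0$; this subtlety is exactly what the three-case picture is tracking.

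The main obstacle I expect is bookkeeping in case (iii) in the convex-position subcase --- making precise which facets to cone over and checking the cones tile $\conv(\pointsetnonconvex)$ without overlap --- together with the compatibility-along-ridges issue in constructing $\complex_\partial$. Both are resolved by committing to pulling triangulations from the outset: a pulling triangulation of $\conv(\pointsetnonconvex)$ with $\vector0$ pulled \emph{first} automatically makes every maximal simplex contain $\vector0$ (that is precisely the defining property of pulling the first vertex), which gives the star triangulation directly and simultaneously handles all three cases, at the cost of needing the basic fact that pulling triangulations exist for arbitrary full-dimensional point sets. I would therefore structure the proof as: recall/establish that pulling triangulations exist (by induction on the number of points, or cite \cite{LoeraRambauSantos:2010}), observe that pulling $\vector0$ first yields a triangulation all of whose maximal simplices are $d$-simplices containing $\vector0$, and conclude.
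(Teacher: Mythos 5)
Your initial ``cone over a boundary triangulation'' plan is essentially the paper's own argument: the paper distinguishes the same three cases ($\vector{0}$ in the interior of $\conv(\pointset)$, $\pointsetnonconvex$ in convex position, $\vector{0}$ on the boundary but not a vertex), takes in the first two cases a triangulation of the boundary facets and cones from $\vector{0}$ over the facets not containing it, and in the third case recursively re-triangulates each face that contains $\vector{0}$ --- exactly the bookkeeping you flag as the delicate part. One small miscalibration in your sketch: the delicacy of the third case is not about ``visibility'' (that is the convex-position subcase), but about the fact that $\vector{0}$ sits in the relative interior of some proper face and must become a vertex of the triangulation, forcing the recursive re-triangulation of that face and all faces above it.

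Your final pivot to ``pull $\vector{0}$ first'' is a genuinely different, cleaner route, and it is correct --- but it rests on the general pulling triangulation for arbitrary (not necessarily convex-position) point configurations, which this paper does not establish. The paper's pulling triangulation is introduced only later in the same section, and only for a point set in convex position; its recursion runs over faces of $\polytope$ and therefore never even sees $\vector{0}$ when $\vector{0}$ lies in the interior or on the boundary of $\conv(\pointset)$. To take your shortcut you would either need to generalize that definition (allowing a non-vertex point to be pulled, coning over all facets of the current cell) or appeal to the external reference you cite. The payoff of your route is a single uniform argument covering all three cases; the payoff of the paper's explicit three-case construction is self-containment using only elementary reasoning available at that point in the exposition.
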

\begin{proof}
We distinguish three cases, dependent on the position of $\vector{0}$. 
If it lies in the interior of $\conv(\pointset)$,
we consider an arbitrary triangulation of $\pointset$ (for instance, the Delaunay triangulation~\cite[Ch.~9]{BergCheongKreveldOvermars:2008}),
remove all interior simplices from this triangulation, and obtain a star triangulation by joining every $(d-1)$-simplex with $\vector{0}$.

If $\pointsetnonconvex$ is in convex position, we proceed similarly: Starting with an arbitrary triangulation of $\pointsetnonconvex$,
we remove the $d$-simplices to obtain a triangulation of the boundary facets, and join $\vector{0}$ with every $(d-1)$-simplex
that does not contain $\vector{0}$ to obtain a star triangulation.

It remains the case that $\vector{0}$ lies on the boundary of $\conv(\pointset)$. For this case, consider again a triangulation
of $\pointset$. Inductively in dimension, re-triangulate any $k$-face $\tau$ that contains $\vector{0}$ by joining $\vector{0}$
with each $(k-1)$-face on the boundary of $\tau$ that does not contain $\vector{0}$. The result of this process is a star triangulation.
\end{proof}
\noindent

\paragraph{Pulling triangulations.} 
As usual, let $\pointset$ be a point set in convex position spanning a polytope $\polytope$ in $\R^d$, 
and let $\vector{p}_1\in\pointset$.
We can describe a star triangulation with respect to $\vector{p}_1$ also as follows:
Triangulate each facet of $\polytope$ that does not contain $\vector{p}_1$
such that the triangulations agree on their common boundaries.
Writing $\Sigma\ce\{\sigma_1,\ldots,\sigma_m\}$ for the maximal simplices triangulating these facets, it is not difficult
to see that a (star) triangulation of $\polytope$ is given by the maximal simplices 
\[\vector{p}_1\ast\Sigma\ce\{\vector{p}_1\ast\sigma_1,\ldots,\vector{p}_1\ast\sigma_m\},\]
where $\vector{v}\ast\sigma$ is the simplex spanned by $\vector{v}$ and the vertices of $\sigma$. Recursively star-triangulating the facets
not containing $\vector{p}_1$ in the same manner, this construction gives rise to the \emph{pulling triangulation}.

To define the triangulation formally, we fix a total order $\vector{p}_1\prec\vector{p}_2\prec\ldots\prec\vector{p}_n$ on $\pointset$.
For a single point, we set $\pulling(\{\vector{p}\})\ce\{\vector{p}\}$.
For any face $\face$ of $\polytope$ with positive dimension, let $\vector{p}_k$ denote the smallest vertex of $\face$ with respect to $\prec$.
Then
\[\pulling(\face)\ce\vector{p}_k\ast \bigcup_{\stackrel{\text{$\ridge$ facet of $\face$}}{\vector{p}_k\notin\ridge}} \pulling(\ridge).\]
The following result follows directly by induction on the dimension of the faces. See~\cite[Sec.5.6]{BeckSanyal:2016}, \cite[Lemma 4.3.6]{LoeraRambauSantos:2010}:
\begin{theorem}
$\pulling(\polytope)$ are the maximal simplices of a triangulation of $\polytope$.
\end{theorem}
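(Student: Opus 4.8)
The plan is to prove the statement by induction on $\dim(\face)$, following the recursive definition of $\pulling$. The base case is a single vertex $\{\vector{p}\}$, where $\pulling(\{\vector{p}\})=\{\vector{p}\}$ is trivially (the set of maximal simplices of) a triangulation of the $0$-dimensional polytope $\{\vector{p}\}$. For the inductive step, fix a face $\face$ of positive dimension $\ell$, let $\vector{p}_k$ be its $\prec$-smallest vertex, and assume the claim holds for all faces of dimension less than $\ell$ — in particular, for every facet $\ridge$ of $\face$, the set $\pulling(\ridge)$ consists of the maximal simplices of a triangulation $\complex_\ridge$ of $\ridge$.

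The key steps, in order, would be: (1) Observe that the facets $\ridge$ of $\face$ not containing $\vector{p}_k$ together cover the ``far'' part of $\partial\face$, i.e. the union of the closed facets avoiding $\vector{p}_k$ equals the closure of $\partial\face\setminus\{$facets containing $\vector{p}_k\}$, and every point of $\face$ lies on a segment from $\vector{p}_k$ to such a far facet (this is the same ``shoot a ray from $\vector{p}_k$'' argument used in Lemma~\ref{lem:spine_facet_equiv}). (2) Check that the triangulations $\complex_\ridge$ of adjacent far facets agree on shared sub-faces: two far facets $\ridge,\ridge'$ meet in a face $\ridge\cap\ridge'$ of $\face$ of dimension $\ell-2$, and $\pulling$ is defined face-by-face on $\polytope$, so $\pulling(\ridge)$ and $\pulling(\ridge')$ restricted to $\ridge\cap\ridge'$ both equal $\pulling(\ridge\cap\ridge')$ — hence they are compatible. (3) Conclude that $\bigcup_{\vector{p}_k\notin\ridge}\complex_\ridge$ is a simplicial complex triangulating the union of the far facets, a $(\ell-1)$-dimensional ball. (4) Form $\vector{p}_k\ast(\cdot)$: since $\vector{p}_k$ is a vertex of $\face$ and does not lie in the affine hull of any far facet (that hull is a supporting hyperplane of $\face$ not through $\vector{p}_k$), each $\vector{p}_k\ast\sigma$ is a genuine $\ell$-simplex, and the join of a complex triangulating a ball with an external apex is again a simplicial complex whose underlying space is the cone, which here equals $\face$. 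One must verify the intersection condition for the new top-dimensional simplices: $(\vector{p}_k\ast\sigma)\cap(\vector{p}_k\ast\tau)=\vector{p}_k\ast(\sigma\cap\tau)$, which holds because $\vector{p}_k$ is in convex position with respect to the far facets and the far-facet complex is itself a complex.

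The main obstacle I expect is step (2)–(3): making precise that the pulling triangulations of the various far facets glue into a single simplicial complex, i.e. that the intersection condition holds across facets and not just within one facet. The clean way to handle this is to exploit that $\pulling$ is defined uniformly on \emph{all} faces of $\polytope$ simultaneously via the fixed global order $\prec$, so that for any two faces $\face_1,\face_2$ one has $\pulling(\face_1)$ and $\pulling(\face_2)$ inducing the same subdivision on $\face_1\cap\face_2$ by a secondary induction; this is exactly the content cited from \cite[Lemma 4.3.6]{LoeraRambauSantos:2010}. The cone step (4) is then standard: coning a triangulated ball from a point outside its affine span, with the apex in convex position, yields a triangulated ball one dimension up. Assembling these, $\pulling(\face)=\vector{p}_k\ast\bigcup_{\vector{p}_k\notin\ridge}\pulling(\ridge)$ is the set of maximal simplices of a triangulation of $\face$; taking $\face=\polytope$ gives the theorem.
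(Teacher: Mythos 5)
Your proof is correct and follows exactly the approach the paper indicates: the paper gives no explicit argument but states that the theorem ``follows directly by induction on the dimension of the faces'' and cites \cite[Sec.5.6]{BeckSanyal:2016} and \cite[Lemma 4.3.6]{LoeraRambauSantos:2010}, and your induction --- coning the compatibly-glued pulling triangulations of the far facets from the $\prec$-smallest vertex, with compatibility across facets secured by the global order $\prec$ --- is precisely the standard argument those references supply.
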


\section{Spinal triangulations}
\label{sec:characterization}

Fix a simplicial complex $\complex$ with vertex set $\SoVertices(\complex)$ in $\R^d$ 
and a set $\spineset\subseteq\SoVertices(\complex)$
of size at most $d+1$. Let $\sigma$ denote the simplex spanned by $\spineset$.
We call $\complex$ \emph{$\spineset$-spinal} if every maximal simplex of $\complex$ contains $\sigma$ as a face.
If $\complex$ is a triangulation of $\pointset$, 
we talk about a \emph{$\spineset$-spinal triangulation} accordingly.
$\spineset$-spinal triangulations are closely related to spines: 
if a $\spineset$-spinal triangulation of $\pointset$ exists, then $\spineset$ is a spine of $\pointset$,
because all maximal simplices of the triangulation lie in the $\spineset$-span.
For the previously discussed spine of a hypercube consisting of two opposite points, also a spinal triangulation
exists, consisting of $d!$ $d$-simplices that all share the diagonal connecting these points.
This construction is called \emph{staircase triangulation}~\cite{LoeraRambauSantos:2010} or \emph{Freudenthal triangulation}~\cite{EdelsbrunnerKerber:2012}.

We show next that a spine always induces a spinal triangulation.
Let $\polytope$ be a polytope spanned by a finite full-dimensional point set $\pointset\subseteq\R^d$ in convex position
and let $\spineset=\{\vector{u}_1,\ldots,\vector{u}_\spinesetcard\}$ be a spine of $\pointset$.
We fix a total order $\prec$ on $\pointset$ where $\vector{u}_1 \prec \vector{u}_2\prec\ldots\prec\vector{u}_\spinesetcard$ are the $\spinesetcard$ smallest elements,
preceeding all points in $\pointset\setminus\spineset$.

\begin{lemma}
\label{lem:spinal_triang_exists}
The pulling triangulation with respect to $\prec$ is a $\spineset$-spinal triangulation.
\end{lemma}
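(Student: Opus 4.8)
The plan is to prove by induction on the dimension $d$ that $\pulling(\polytope)$ is $\spineset$-spinal, i.e.\ that every maximal simplex in the pulling triangulation contains the full spine $\spineset$ as a set of vertices. Since we already know from the cited theorem that $\pulling(\polytope)$ is a genuine triangulation, the only thing to check is the spinal property. The base case $d=1$ (where $\polytope$ is a segment and $\spineset$ is either one or both endpoints) is immediate: the single maximal simplex is the whole segment, which contains all of $\spineset$.

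For the inductive step, I would unwind the recursive definition of $\pulling(\polytope)$. Let $\vector{p}_k$ be the $\prec$-minimal vertex of $\polytope$; by our choice of order this is $\vector{u}_1$. Then $\pulling(\polytope) = \vector{u}_1 \ast \bigcup_{\facet} \pulling(\facet)$, where $\facet$ ranges over the facets of $\polytope$ not containing $\vector{u}_1$. A maximal simplex of $\pulling(\polytope)$ thus has the form $\vector{u}_1 \ast \bar\sigma$ where $\bar\sigma$ is a maximal simplex of $\pulling(\facet)$ for such a facet $\facet$. Now I invoke Lemma~\ref{lem:spine_of_faces}: since $\spineset$ is a spine of $\pointset$ and $\facet$ is a facet (an $(d-1)$-face), $\bar\spineset := \spineset \cap \facet$ is a spine of $\bar\pointset := \pointset \cap \facet$ as a point set in $\R^{d-1}$. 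Moreover, because $\facet$ does not contain $\vector{u}_1$ and $\facet$ must contain at least $\spinesetcard - 1$ points of $\spineset$ (spine property applied to the facet directly), we get exactly $\bar\spineset = \spineset \setminus \{\vector{u}_1\} = \{\vector{u}_2,\ldots,\vector{u}_\spinesetcard\}$.

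The crucial point is that the total order $\prec$ restricted to $\bar\pointset$ still has $\vector{u}_2 \prec \cdots \prec \vector{u}_\spinesetcard$ as its smallest elements, so the pulling triangulation $\pulling(\facet)$ is exactly the one the induction hypothesis applies to; hence every maximal simplex $\bar\sigma$ of $\pulling(\facet)$ contains all of $\bar\spineset$. Joining with $\vector{u}_1$, the simplex $\vector{u}_1 \ast \bar\sigma$ contains $\{\vector{u}_1\} \cup \bar\spineset = \spineset$, as desired. One should be a little careful that the recursion in the definition of $\pulling$ descends through \emph{all} faces, not just facets, but the induction on dimension handles this uniformly: the same argument shows that for every face $\face$ of $\polytope$, $\pulling(\face)$ is $(\spineset \cap \face)$-spinal, and restricting the order is compatible with the recursion because the $\prec$-minimal vertex of any face $\face$ containing some $\vector{u}_i$ is the smallest such $\vector{u}_i$, which lands in $\spineset \cap \face$.

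The main obstacle, such as it is, is bookkeeping rather than conceptual: one must verify that the inductive hypothesis is applied to the \emph{right} pulling triangulation on each facet, which means checking that the restriction of $\prec$ to $\bar\pointset$ is again an admissible order (spine elements smallest) — this is where the hypothesis that $\vector{u}_1,\ldots,\vector{u}_\spinesetcard$ are the $\spinesetcard$ smallest elements of $\pointset$ is used in an essential way. A secondary subtlety is the case analysis on whether a facet contains $\vector{u}_1$ or not, and whether $\spinesetcard = 1$ (in which case the statement reduces to the fact that a star/pulling triangulation with respect to a vertex puts that vertex in every maximal simplex) versus $\spinesetcard \geq 2$; but Lemma~\ref{lem:spine_facet_equiv} and Lemma~\ref{lem:spine_of_faces} have already done the real work of controlling how $\spineset$ distributes over the facets, so the remaining argument is a clean induction.
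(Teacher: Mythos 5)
Your proof is correct and follows essentially the same route as the paper: unwind the pulling recursion one level, observe that a maximal simplex is $\vector{u}_1 \ast \bar\sigma$ with $\bar\sigma$ maximal in the pulling triangulation of a facet $\facet$ not containing $\vector{u}_1$, use Lemma~\ref{lem:spine_of_faces} to get that $\spineset\setminus\{\vector{u}_1\}$ is a spine of $\facet$, and apply the induction hypothesis to $\facet$. The only cosmetic difference is the induction parameter — you induct on the dimension $d$, while the paper inducts on $\spinesetcard$ — but both decrement in lockstep when passing to the facet, so the two inductions are interchangeable here; if anything, you are slightly more careful than the paper in explicitly noting that the restriction of $\prec$ to the facet is again an admissible order with the remaining spine vertices smallest, a point the paper leaves implicit.
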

\begin{proof}
We prove the statement by induction on $\spinesetcard$. 
Let $\triangulation$ denote the pulling triangulation with respect to $\prec$.
For $\spinesetcard=1$, every maximal simplex of $\triangulation$ contains $\vector{u}_1$ by construction.
For $\spinesetcard>1$, every maximal simplex
is a join of $\vector{u}_1$ with a $(d-1)$-simplex $\sigma$ that is contained in a facet $\facet$ of $\polytope$ that
does not contain $\vector{u}_1$. $\sigma$, however, is itself a maximal simplex of the pulling triangulation
of $\facet$. By the spine property, $\spineset'\ce\{\vector{u}_2,\ldots,\vector{u}_\spinesetcard\}$ are vertices of $\facet$
and form a spine by Lemma~\ref{lem:spine_of_faces}. By induction, the pulling triangulation of $\facet$
is $\spineset'$-spinal. Therefore, $\sigma$ contains all vertices of $\spineset'$, and the join with $\vector{u}_1$
contains all vertices of $\spineset$.
\end{proof}

\paragraph{Folds and lifts.}
As before, let $\polytope$ be a polytope spanned by a finite full-dimensional point set $\pointset\subseteq\R^d$ in convex position, 
$\spineset\subseteq\pointset$ a spine of $\pointset$ with $\spinesetcard$ elements, and set $e\ce d-\spinesetcard+1$. 
Assume without loss of generality that the origin is among the points in $\spineset$. 
Furthermore let $\rotationaxis_{\spineset}$ be the subspace of $\R^d$ spanned by $\spineset$.
It is easy to see that the spine points are affinely independent, so that the dimension
of $\rotationaxis_{\spineset}$ is $\spinesetcard-1$.
Let $\rotationaxis_{\spineset}^\perp$ be the orthogonal complement, which is of dimension $e$.
Let $\linearmap_{\spineset}:\R^d\to\rotationaxis_{\spineset}^\perp$ the (orthogonal) projection of $\R^d$ to $\rotationaxis_{\spineset}^\perp$. 
For notational convenience, we use the short forms $\fold{x}\ce\linearmap_{\spineset}(x)$
and $\fold{X}\ce\linearmap_{\spineset}(X)$
for the images of points and sets in $\R^d$.

Fix a $\spineset$-spinal triangulation $\triangulation$ and let $\sigma$ be a maximal simplex of $\triangulation$.
Recall that the vertices of $\sigma$ are the points of $U$, which all map to $\vector{0}$ under $\linearmap_{\spineset}$,
and $e$ additional vertices $\vector{v}_1,\ldots,\vector{v}_e$. It follows that $\fold{\sigma}$ is 
the convex hull of $\{\vector{0},\fold{\vector{v}}_1,\ldots,\fold{\vector{v}}_e\}$. Moreover, since $\sigma$ has positive
($d$-dimensional) volume, its projection $\fold{\sigma}$ has positive ($e$-dimensional) volume as well.
It follows that $\fold{\sigma}$ is a $e$-simplex spanned by $\{\vector{0},\fold{\vector{v}}_1,\ldots,\fold{\vector{v}}_e\}$.
Consequently, with $\sigma_1,\ldots,\sigma_t$ being the maximal simplices of $\triangulation$,  
we call its \emph{fold} the set of simplices consisting of $\fold{\sigma}_1,\ldots,\fold{\sigma}_m$
and all their faces.
The following statement is a reformulation of~\cite[Prop.2.3]{Athanasiadis:2005} and~\cite[Prop.3.12]{ReinerWelker:2005}

\begin{lemma}
\label{lem:fold_is_star}
The fold of a $\spineset$-spinal triangulation $\triangulation$ is a star triangulation (with respect to the origin).
\end{lemma}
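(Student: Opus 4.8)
The goal is to show that $\fold{\triangulation}$, the fold of a $\spineset$-spinal triangulation $\triangulation$ of $\polytope$, is a star triangulation of $\fold{\pointset}$ with respect to $\vector{0}$. We already know from the discussion preceding the lemma that each $\fold{\sigma}_i$ is a genuine $e$-simplex containing $\vector{0}$ as a vertex, so $\vol(\fold{\sigma}_i)>0$ and every maximal simplex of $\fold{\triangulation}$ is $e$-dimensional and incident to the origin. It therefore remains to establish the two defining properties: (1) $\fold{\triangulation}$ is a simplicial complex (i.e.\ any two of its simplices meet in a common face), and (2) its underlying space equals $\conv(\fold{\pointset}) = \fold{\polytope}$, with vertex set exactly $\fold{\pointset}$. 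Once these are in place, the ``star'' qualifier is immediate from the vertex-incidence observation, and the existence of a valid star triangulation structure is then automatic.

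\textbf{Key steps.} First I would verify the covering/underlying-space property. Since $\triangulation$ is a triangulation of $\polytope$, we have $\bigcup_i \sigma_i = \polytope$, hence $\bigcup_i \fold{\sigma}_i = \linearmap_{\spineset}(\bigcup_i \sigma_i) = \linearmap_{\spineset}(\polytope) = \fold{\polytope}$; and $\fold{\polytope} = \conv(\fold{\pointset})$ because $\linearmap_{\spineset}$ is linear and commutes with taking convex hulls. For the vertex set, every vertex of every $\fold{\sigma}_i$ is $\fold{\vector{v}}$ for some $\vector{v}\in\pointset$ or is $\vector{0}$ (which is itself $\fold{\vector{u}}$ for any $\vector{u}\in\spineset$), so $\SoVertices(\fold{\triangulation})\subseteq\fold{\pointset}$; conversely each $\vector{v}\in\pointset$ is a vertex of some maximal $\sigma_i$, and since the $e$ non-spine vertices of $\sigma_i$ map bijectively to the $e$ nonzero vertices of $\fold{\sigma}_i$ while the spine vertices all map to $\vector{0}$, the point $\fold{\vector{v}}$ is a vertex of $\fold{\sigma}_i$. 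The harder half is the intersection property. The natural approach: given maximal simplices $\sigma,\tau$ of $\triangulation$, I would show $\fold{\sigma}\cap\fold{\tau} = \widehat{\sigma\cap\tau}$. One inclusion is trivial. For the reverse, the crucial point is that on each maximal simplex $\sigma = \conv(\spineset\cup\{\vector{v}_1,\dots,\vector{v}_e\})$, the restriction of $\linearmap_{\spineset}$ has the following structure: writing a point of $\sigma$ in barycentric coordinates $\sum_j \mu_j \vector{u}_j + \sum_k \lambda_k \vector{v}_k$, its image is $\sum_k \lambda_k \fold{\vector{v}}_k$ where $(\lambda_1,\dots,\lambda_e)$ together with $\lambda_0 := 1 - \sum_k\lambda_k = \sum_j \mu_j$ are exactly the barycentric coordinates of $\fold{\sigma}$ with respect to $(\vector{0},\fold{\vector{v}}_1,\dots,\fold{\vector{v}}_e)$. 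In particular, given a point $\fold{x}\in\fold{\sigma}\cap\fold{\tau}$, I can lift it to the \emph{unique} point $x_\sigma\in\sigma$ whose spine-barycentric mass is concentrated entirely on $\vector{u}_1$ (say) — i.e.\ choose $\mu_1 = \lambda_0$, $\mu_2=\dots=\mu_\spinesetcard = 0$ — and similarly lift to $x_\tau\in\tau$; then $x_\sigma = x_\tau$ because both equal $\vector{u}_1 + \sum_k \lambda_k(\vector{v}_k - \vector{u}_1)$ where the $\lambda_k$ are the common (origin-based) barycentric coordinates of $\fold{x}$ — wait, one must be careful that $\fold{x}$ determines the $\lambda_k$ only relative to a choice of simplex, so instead I would argue: the common lift lies in $\sigma\cap\tau$, which is a face $\rho$ of both, and $\fold{\rho}$ is a face of both $\fold{\sigma}$ and $\fold{\tau}$ containing $\fold{x}$; comparing dimensions and using that $\linearmap_{\spineset}$ drops dimension by exactly $\spinesetcard-1$ on any $\spineset$-spanning simplex shows $\fold{\sigma}\cap\fold{\tau}=\fold{\rho}$ is a common face. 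I would also note that distinct maximal simplices of $\triangulation$ need not give distinct folds a priori, but if $\fold{\sigma}=\fold{\tau}$ the same lifting argument forces $\sigma=\tau$, so maximal simplices are in bijection with their folds.

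\textbf{Main obstacle.} The subtle point — and the one I would spend the most care on — is the intersection property, specifically ruling out ``overlaps that project to the same region but come from incomparable simplices.'' The linear map $\linearmap_{\spineset}$ could in principle fold two simplices on top of each other in a way that is not a face relation. The resolution rests on the rigidity provided by the spine: on a $\spineset$-spanning $e$-simplex, $\linearmap_{\spineset}$ behaves like an affine isomorphism onto its image once we quotient out the common flat $\rotationaxis_{\spineset}$, so a point of $\fold{\polytope}$ that lies in the relative interior of $\fold{\sigma}$ has a full $(\spinesetcard-1)$-dimensional preimage fiber inside $\sigma$ (the ``$\spineset$-fiber''), and any two maximal simplices containing a common point of $\polytope$ must share a face on which these fibers are consistent. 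Making this fiber-consistency argument airtight — rather than hand-waving with barycentric coordinates that are simplex-dependent — is where the real work lies; everything else (covering, vertex set, dimension count, the star incidence at $\vector{0}$) is routine. The cited references \cite[Prop.2.3]{Athanasiadis:2005} and \cite[Prop.3.12]{ReinerWelker:2005} presumably handle exactly this, so an alternative is simply to invoke them after checking that our hypotheses match theirs.
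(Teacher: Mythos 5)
Your decomposition of what must be verified is correct, and the routine parts (covering, vertex set, incidence at $\vector{0}$) are fine. The problem is the intersection property, and you yourself flag exactly where your sketch breaks down. The step you cannot justify is the claim that the two lifts $x_\sigma$ and $x_\tau$ coincide: each is defined as the preimage of $\fold{x}$ inside the $e$-dimensional affine flat $\operatorname{aff}(\vector{u}_1,\vector{v}_1,\ldots,\vector{v}_e)$ (resp.\ the analogous flat in $\tau$), and these two flats are \emph{different} $e$-flats through $\vector{u}_1$ transverse to $\rotationaxis_\spineset$. Both $x_\sigma$ and $x_\tau$ project to $\fold{x}$, so they differ by some vector in $\rotationaxis_\spineset$, but nothing in your argument forces that vector to be zero. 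Without that, the chain ``the common lift lies in $\sigma\cap\tau$'' has no common lift to start from, and the fallback of ``invoke Athanasiadis or Reiner--Welker'' is a confession that the proof isn't there.

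The paper closes this gap with a one-line idea that you did not find and that makes the barycentric bookkeeping unnecessary. Since $\triangulation$ is a simplicial complex and both $\sigma_1,\sigma_2$ contain $\conv(\spineset)$, there is a hyperplane $\hyperplane\subseteq\R^d$ separating $\sigma_1$ and $\sigma_2$ with $\sigma_1\cap\hyperplane=\sigma_1\cap\sigma_2=\sigma_2\cap\hyperplane\supseteq\conv(\spineset)$. Because $\vector{0}\in\spineset$, the affine hull of $\conv(\spineset)$ is the linear subspace $\rotationaxis_\spineset=\ker\linearmap_\spineset$, and it is contained in $\hyperplane$. Consequently $\linearmap_\spineset(\hyperplane)$ is still a hyperplane in $\R^e$ (codimension is preserved precisely because the kernel lies inside $\hyperplane$), and it separates $\fold{\sigma_1}$ from $\fold{\sigma_2}$ while cutting out $\fold{\sigma_1\cap\sigma_2}$ from each. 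This simultaneously gives disjointness of interiors and the ``intersection is a common face'' property, with no simplex-dependent coordinates. The lesson: when a linear map's kernel is known to lie inside the locus you care about, push the separating hyperplane through the map rather than trying to lift points back.
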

\begin{proof}
All maximal simplices of the fold contain the origin by construction. Moreover, their union covers $\fold{\polytope}$
because $\triangulation$ covers $\polytope$. Finally, we argue that the fold of two distinct maximal simplices $\sigma_1,\sigma_2$ cannot overlap:
Let $\hyperplane$ be a hyperplane that separates $\sigma_1$ and $\sigma_2$. Since $\rotationaxis_{\spineset}$ is contained in the affine span of $\sigma_1\cap\sigma_2$, $\hyperplane$ contains $\rotationaxis_{\spineset}$. Then, $\linearmap_{\spineset}(\hyperplane)$ is a hyperplane in $\R^e$
which separates the two $e$-simplices $\fold{\sigma_1}$ and $\fold{\sigma_2}$.
\end{proof}

We will now define the converse operation to get from a star triangulation in $\R^e$ to a $\spineset$-spinal triangulation in $\R^d$.
We first show that pre-images of vertices are well-defined.

\begin{lemma}
\label{lem:no_two_coincide}
If $\vector{v}\in\pointset\setminus\spineset$ then $\fold{\vector{v}}\neq\vector{0}$. Furthermore, if $\vector{v}\neq\vector{w}\in\pointset\setminus\spineset$ then $\fold{\vector{v}}\neq\fold{\vector{w}}$.
\end{lemma}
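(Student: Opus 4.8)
The plan is to argue by contradiction, exploiting the fact that $\spineset$ is a spine to produce a forbidden coincidence on a facet. For the first claim, suppose $\vector{v}\in\pointset\setminus\spineset$ with $\fold{\vector{v}}=\vector{0}$; equivalently, $\vector{v}\in\rotationaxis_{\spineset}$, i.e.\ $\vector{v}$ lies in the affine flat spanned by $\spineset$. Since $\vector{v}$ is a vertex of $\polytope$ and the spine points are affinely independent (as noted in the excerpt), $\vector{v}$ would be an affine combination of the $\spinesetcard$ points of $\spineset$, and in fact, because $\vector{v}\in\pointset$ is in convex position while $\conv(\spineset)$ is a $(\spinesetcard-1)$-simplex, $\vector{v}$ cannot lie inside $\conv(\spineset)$ itself; it must lie on the affine hull but outside the simplex. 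The cleanest way forward is to pick a facet $\facet$ of $\polytope$ avoiding $\vector{v}$ — which exists since $\vector{v}$ is a vertex, so some supporting hyperplane meets $\polytope$ exactly in a facet not containing $\vector{v}$, or more simply any facet not through $\vector{v}$ — and use the spine property that $\facet$ contains at least $\spinesetcard-1$ of the spine points. Then $\facet$ and $\vector{v}$ together with those $\spinesetcard-1$ spine points would force $\vector{v}$ into an affine relation incompatible with $\facet$ being a proper face; I would make this precise by observing that the affine hull of $\spineset$ is $(\spinesetcard-1)$-dimensional and already contains $\spinesetcard-1$ points of $\facet$, so if it also contained $\vector{v}\notin\facet$ then $\facet$'s supporting hyperplane would have to either contain all of $\rotationaxis_\spineset$ (hence $\vector{v}$, contradiction) or meet it in dimension $\spinesetcard-2$, pinning the $\spinesetcard-1$ spine points on $\facet$ into that smaller flat — contradicting their affine independence.

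For the second claim, suppose $\vector{v}\neq\vector{w}$ in $\pointset\setminus\spineset$ with $\fold{\vector{v}}=\fold{\vector{w}}$. This means $\vector{v}-\vector{w}\in\rotationaxis_\spineset$ (it is killed by the orthogonal projection $\linearmap_\spineset$ onto $\rotationaxis_\spineset^\perp$), so the whole line through $\vector{v}$ and $\vector{w}$ is parallel to the flat spanned by $\spineset$. I would again select a facet $\facet$ of $\polytope$ that does not contain both $\vector{v}$ and $\vector{w}$ — since $\vector{v}$ and $\vector{w}$ are distinct vertices, there is a facet containing exactly one of them, say $\vector{v}\in\facet$, $\vector{w}\notin\facet$ — and invoke the spine property: $\facet$ contains at least $\spinesetcard-1$ points of $\spineset$, which already span a flat of dimension $\spinesetcard-2$ inside $\facet$'s supporting hyperplane $\hyperplane$. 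Since $\vector{v}\in\facet$ and $\vector{v}-\vector{w}$ is a direction in $\rotationaxis_\spineset$, I would analyze whether $\hyperplane$ contains the direction $\vector{v}-\vector{w}$: if it does, then $\vector{w}=\vector{v}-(\vector{v}-\vector{w})\in\hyperplane$, so $\vector{w}\in\polytope\cap\hyperplane=\facet$, contradicting $\vector{w}\notin\facet$; if it does not, then $\hyperplane$ meets the $(\spinesetcard-1)$-flat $\rotationaxis_\spineset$ transversally, in dimension exactly $\spinesetcard-2$, so all $\spinesetcard-1$ spine points of $\facet$ — being in both $\hyperplane$ and $\rotationaxis_\spineset$ — lie in a $(\spinesetcard-2)$-flat, contradicting their affine independence.

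The main obstacle I anticipate is bookkeeping the affine-dimension argument cleanly: one must be careful that "$\facet$ contains at least $\spinesetcard-1$ spine points" together with "those points are affinely independent" really does force a $(\spinesetcard-2)$-dimensional flat inside $\hyperplane\cap\rotationaxis_\spineset$, and that the only escape (namely $\hyperplane\supseteq\rotationaxis_\spineset$, which makes $\hyperplane\cap\rotationaxis_\spineset$ have dimension $\spinesetcard-1$) is precisely the case that produces the desired contradiction via $\vector{v}$ or $\vector{w}$ landing on $\facet$. A possibly slicker route, which I would consider as an alternative, is to apply Lemma~\ref{lem:spine_of_faces}: restrict to the facet $\facet$ containing $\vector{v}$ (or containing whichever of $\vector{v},\vector{w}$ is present) and induct on the ambient dimension, since $\bar\spineset\ce\spineset\cap\facet$ is a spine of $\pointset\cap\facet$ with $\abs{\bar\spineset}\ge\spinesetcard-1$, and the projection $\linearmap_\spineset$ restricted to $\facet$ is compatible with $\linearmap_{\bar\spineset}$ up to the one extra coordinate; the base case $\spinesetcard=1$ is immediate because then $\rotationaxis_\spineset=\{\vector 0\}$ and $\linearmap_\spineset$ is the identity, so $\fold{\vector v}=\vector v\neq\vector 0$ and $\fold{\vector v}=\vector v\neq\vector w=\fold{\vector w}$. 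Either way the key leverage is always the same: the spine condition guarantees enough spine points survive on a separating facet to make a coincidence collapse the affine span of $\spineset$.
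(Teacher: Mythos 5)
There is a concrete gap in the dimension count that appears in both halves of your argument. When the facet hyperplane $\hyperplane$ meets $\rotationaxis_\spineset$ transversally you correctly conclude $\dim(\hyperplane\cap\rotationaxis_\spineset)=\spinesetcard-2$, but you then claim that placing $\spinesetcard-1$ affinely independent spine points inside this flat "contradicts their affine independence." It does not: $\spinesetcard-1$ affinely independent points span \emph{exactly} a $(\spinesetcard-2)$-dimensional affine flat, so they sit inside $\hyperplane\cap\rotationaxis_\spineset$ with no contradiction whatsoever. A contradiction via this route would need $\spinesetcard$ spine points on $\facet$; but then $\rotationaxis_\spineset$ (their affine span) is already contained in $\hyperplane$ and you are in the other branch. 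In the case the spine definition explicitly permits — a facet meeting $\spineset$ in exactly $\spinesetcard-1$ points, which is the generic ``special simplex'' situation — the facet-counting argument simply does not close; nothing forces $\vector{v}$ (resp.\ $\vector{w}$) onto $\hyperplane$. Your sketched alternative via Lemma~\ref{lem:spine_of_faces} is also incomplete: the crucial compatibility between $\linearmap_\spineset$ restricted to $\facet$ and $\linearmap_{\bar\spineset}$ taken inside $\facet$'s own affine hull, and the claim that the hypothesised coincidence persists after restriction, are exactly what would need proof.

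The paper argues differently, leaning on the geometric characterization of spines (Lemma~\ref{lem:spine_facet_equiv}) rather than the per-facet count, and this is what makes the dimension count bite. Since $\spineset$ is a spine, $\vector{v}$ is a vertex of some $d$-simplex $\sigma$ in the $\spineset$-span; if $\fold{\vector{v}}=\vector{0}$ then $\vector{v}$ lies in the span $\rotationaxis_\spineset$ of $\spineset$, so the $d+1$ vertices of $\sigma$ are affinely dependent and $\sigma$ cannot be full-dimensional — contradiction. For the second claim: either some $\spineset$-span $d$-simplex has both $\vector{v}$ and $\vector{w}$ as vertices, and then its fold degenerates; or no such simplex exists, in which case, in a $\spineset$-spinal triangulation (Lemma~\ref{lem:spinal_triang_exists}), the disjoint simplex stars around $\vector{v}$ and $\vector{w}$ project to overlapping neighbourhoods of the common image point, contradicting Lemma~\ref{lem:fold_is_star}. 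In both steps it is the full dimensionality of a \emph{single} span simplex — not a facet tally — that forces the contradiction, and that is the ingredient your proposal is missing.
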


\begin{proof}
Since $\spineset$ is a spine of $\pointset$, there is a $d$-simplex $\sigma$ in the $\spineset$-span in $\pointset$ which has $\vector{v}$ among its vertices. If $\vector{v}$ is in the kernel of $\linearmap_{\spineset}$, then $\vector{v}$ and the points in $\spineset$ (which span the kernel of $\linearmap_{\spineset}$) are linearly dependent and $\sigma$ cannot be full-dimensional, which is a contradiction. Thus $\fold{\vector{v}}\neq\vector{0}$.

For the second part, we assume to the contrary that $\vector{v}\neq\vector{w}$ but $\fold{\vector{v}}=\fold{\vector{w}}$. If 
there is also a $d$-simplex $\sigma$ in the $\spineset$-span which has both $\vector{v}$ and $\vector{w}$ among its vertices, $\fold{\sigma}$ cannot be full-dimensional,
which is a contradiction. Otherwise, if no such $d$-simplex $\sigma$ exists, the $d$-simplices incident to $\vector{v}$ triangulate a neighborhood of $\vector{v}$ within $\polytope$,
and the same is true for $\vector{w}$, with the two sets of simplices being disjoint. It follows that their projections under $\linearmap_{\spineset}$ have to overlap,
contradicting Lemma~\ref{lem:fold_is_star}.
\end{proof}

For an $e$-simplex $\fold{\sigma}\subseteq\R^e$ with vertices in $\fold{\pointset}$ and containing $\vector{0}$ as vertex,
the \emph{lifted} $d$-simplex $\sigma\subseteq\R^d$ is spanned by the pre-image of $\SoVertices(\fold{\sigma})$ under $\linearmap_{\spineset}\big\vert_\pointset$ (the restriction of $\linearmap_{\spineset}$ to $\pointset$). 
Note the slight abuse of notation as we chose ``$\fold{\sigma}$'' as the name of a simplex before even defining the simplex $\sigma$, but the naming is justified because
$\fold{\sigma}$ indeed is equal to $\linearmap_{\spineset}(\sigma)$ in this case.
Given a star triangulation of $\fold{\pointset}$, its \emph{lift} is given by the set of lifts of its maximal simplices, together with all their faces.

Our goal is to show that the lift of a star triangulation is a $U$-spinal triangulation of $\polytope$. As a first step, we observe that such a lift is a $\spineset$-spinal 
simplicial complex.

\begin{lemma}
\label{lem:lift_is_simplicial}
The lift of a star triangulation of $\fold{\pointset}$ is a $\spineset$-spinal simplicial complex in $\R^d$ whose underlying space is a subset of $\polytope$.
\end{lemma}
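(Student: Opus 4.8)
The plan is to verify the three defining properties of a $\spineset$-spinal simplicial complex directly from the construction. Write $\complex$ for the lift of a given star triangulation $\fold\complex$ of $\fold\pointset$, and let $\fold\sigma_1,\ldots,\fold\sigma_t$ be the maximal simplices of $\fold\complex$, each containing $\vector0$ and lifting to a $d$-simplex $\sigma_i$ as in the construction. First I would check that each $\sigma_i$ is genuinely a $d$-simplex: by Lemma~\ref{lem:no_two_coincide}, $\linearmap_{\spineset}\big\vert_\pointset$ is injective on $\pointset\setminus\spineset$ and sends no such point to $\vector0$, so the $e$ nonzero vertices of $\fold\sigma_i$ have $e$ distinct preimages $\vector v_1,\ldots,\vector v_e\in\pointset\setminus\spineset$; together with the $\spinesetcard$ points of $\spineset$ (which are affinely independent and all map to $\vector0$) this gives $\spinesetcard+e=d+1$ points. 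They are affinely independent: any affine dependence would project to an affine dependence among $\vector0,\fold{\vector v}_1,\ldots,\fold{\vector v}_e$ (the $\spineset$-part collapsing to a multiple of $\vector0$), contradicting that $\fold\sigma_i$ is an $e$-simplex. Hence $\sigma_i$ is a $d$-simplex, and by construction it contains $\conv(\spineset)$ as a face, so $\complex$ is $\spineset$-spinal once we know it is a simplicial complex.

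Next I would establish the intersection property: for maximal simplices $\sigma_i,\sigma_j$ of $\complex$, the intersection $\sigma_i\cap\sigma_j$ is a common face of both. The key observation is that $\linearmap_{\spineset}$ restricts to an affine isomorphism from $\sigma_i$ onto $\fold\sigma_i$ (it is a bijection on the vertex sets and preserves affine independence, as just argued), and similarly for $\sigma_j$; moreover, because both simplices contain $\conv(\spineset)=\linearmap_{\spineset}^{-1}(\vector0)\cap\sigma_i$, a point of $\sigma_i$ and a point of $\sigma_j$ that project to the same point of $\R^e$ must in fact coincide — one writes the point of $\sigma_i$ in barycentric coordinates over $\spineset\cup\{\vector v_1,\ldots,\vector v_e\}$, notes that the coordinates of the nonzero vertices are determined by the projection, and the remaining weight distributes over $\spineset$, which is the same affine piece $\conv(\spineset)$ shared by $\sigma_j$. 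Concretely, I would show $\sigma_i\cap\sigma_j=\linearmap_{\spineset}^{-1}(\fold\sigma_i\cap\fold\sigma_j)\cap(\sigma_i\cap\sigma_j)$ and that this equals the lift of the common face $\fold\sigma_i\cap\fold\sigma_j$ (which is a common face of $\fold\sigma_i$ and $\fold\sigma_j$ since $\fold\complex$ is a simplicial complex), where by the lift of a face $\tau$ of $\fold\sigma_i$ I mean the simplex spanned by the preimages of its vertices together with all of $\spineset$. The face structure is then inherited: faces of $\sigma_i$ correspond under $\linearmap_{\spineset}$ to faces of $\fold\sigma_i$ that contain $\vector0$ (these are exactly the faces lifted in the construction), and intersecting with the matching face of $\sigma_j$ gives a face of each.

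Finally, the underlying space statement is immediate: each $\sigma_i$ lies in the $\spineset$-span of $\pointset$, hence in $\polytope$, so $\underlyingspace\complex=\bigcup_i\sigma_i\subseteq\polytope$. The main obstacle I anticipate is the intersection property, specifically the bookkeeping needed to argue that two lifted simplices meet only along the lift of a common face of their folds — the subtlety is that a point of $\polytope$ can have many preimages on the line $\rotationaxis_{\spineset}$-fiber direction, and one must use that \emph{both} simplices contain all of $\conv(\spineset)$ to pin down that the fiber component is shared. Once that is set up carefully (projecting separating information as in the proof of Lemma~\ref{lem:fold_is_star}, or arguing directly with barycentric coordinates), the rest is routine. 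An alternative, cleaner route for the intersection step is to invoke Lemma~\ref{lem:fold_is_star}'s converse flavour: pick a hyperplane $\otherhyperplane$ in $\R^e$ separating $\fold\sigma_i$ and $\fold\sigma_j$ along their common face, pull it back to $\linearmap_{\spineset}^{-1}(\otherhyperplane)$, a hyperplane in $\R^d$ containing $\rotationaxis_{\spineset}$, which then separates $\sigma_i$ and $\sigma_j$ along a common face containing $\conv(\spineset)$; I would likely present this version as it parallels the earlier lemma and avoids coordinate computations.
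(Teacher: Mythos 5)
Your proposal is correct and, in the ``cleaner route'' you say you would present, it is essentially the paper's own proof: both pull back a hyperplane $\fold{\hyperplane}\subseteq\R^e$ that separates $\fold{\sigma}_i$ and $\fold{\sigma}_j$ along their common face to the hyperplane $\linearmap_{\spineset}^{-1}(\fold{\hyperplane})\subseteq\R^d$ containing $\rotationaxis_\spineset$, which then separates $\sigma_i$ and $\sigma_j$ along the common face $\conv(\spineset\cup\{\vector{v}_1,\ldots,\vector{v}_k\})$. Your preliminary verification that each lift is a genuine $d$-simplex, via Lemma~\ref{lem:no_two_coincide} and projecting a putative affine dependence, is a detail the paper leaves implicit but is worth spelling out.
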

\begin{proof}
Fix a star triangulation $\fold{\liftedstartriangulation}$ and let $\liftedstartriangulation$ denote its lift.
For any simplex in $\liftedstartriangulation$, all faces are included by construction.
We need to show that for two simplices $\sigma$ and $\tau$ in $\liftedstartriangulation$, 
$\sigma\cap\tau$ is a face of both.
We can assume without loss of generality that $\sigma$ and $\tau$ are maximal, hence $d$-simplices.
By construction, $\sigma$ and $\tau$ are the lifts of $e$-simplices $\fold{\sigma}$ and $\fold{\tau}$
in $\fold{\liftedstartriangulation}$.
Clearly, $\fold{\sigma}$ and $\fold{\tau}$ intersect because they share the vertex $\vector{0}$.
Moreover, since $\fold{\sigma}$ and $\fold{\tau}$ belong to a triangulation, their
intersection is a common face, 
spanned by a set of vertices $\{\vector{0},\fold{\vector{v}}_1\ldots,\fold{\vector{v}}_k\}$.
Hence, there exists a hyperplane $\fold{\hyperplane}$ in $\R^e$ separating $\fold{\sigma}$ and $\fold{\tau}$
such that $\fold{\sigma}\cap\fold{\hyperplane}=\conv\{\vector{0},\fold{\vector{v}}_1\ldots,\fold{\vector{v}}_k\}=\fold{\tau}\cap\fold{\hyperplane}$.
Let $\hyperplane$ denote the preimage of $\fold{\hyperplane}$ under $\linearmap_{\spineset}$.
Then, $\hyperplane$ is a separating hyperplane for $\sigma$ and $\tau$, 
and $\sigma\cap\hyperplane=\conv\{\vector{u}_1,\ldots,\vector{u}_{\spinesetcard},\vector{v}_1,\ldots,\vector{v}_k\}=\tau\cap\hyperplane$
as one can readily verify.
This shows that the lift is a simplicial complex. Its underlying space lies in $\polytope$ because every lifted simplex does. 
It is $\spineset$-spinal because the lift of every simplex contains $\spineset$ by definition.
\end{proof}

\paragraph{Volumes}
The converse of Lemma~\ref{lem:fold_is_star} follows from the fact that all lifts of star triangulations
have the same volume, as we will show next.

\begin{lemma}
\label{lem:volume_formula_simplices}
Let $\sigma$ be a simplex with vertices in $\pointset$ that contains $\spineset=\{\vector{u}_1,\ldots,\vector{u}_\spinesetcard\}$.
Then
\[ \binom{d}{n-1} \vol(\sigma) = \vol(\spineset) \vol(\fold{\sigma}),\]
where $\vol(\spineset)$ denotes the volume of the simplex spanned by $\spineset$.
\end{lemma}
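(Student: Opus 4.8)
The plan is to compute both volumes directly in coordinates adapted to the spine. First I would choose an orthonormal basis of $\R^d$ whose first $\spinesetcard-1$ vectors span $\rotationaxis_{\spineset}$ and whose last $e = d-\spinesetcard+1$ vectors span $\rotationaxis_{\spineset}^\perp$; in these coordinates $\linearmap_{\spineset}$ is the projection onto the last $e$ coordinates, and the simplex $\fold{\sigma}$ is literally spanned by $\{\vector{0}, \fold{\vector{v}}_1, \ldots, \fold{\vector{v}}_e\}$ in $\R^e$. Since the origin is assumed to lie in $\spineset$, we may take $\vector{u}_1 = \vector{0}$ and write every volume as an absolute value of a determinant of edge vectors emanating from $\vector{0}$: concretely $\vol(\sigma) = \frac{1}{d!}\,\bigl|\det M\bigr|$ where $M$ is the $d\times d$ matrix with columns $\vector{u}_2,\ldots,\vector{u}_\spinesetcard, \vector{v}_1,\ldots,\vector{v}_e$, and similarly $\vol(\spineset) = \frac{1}{(\spinesetcard-1)!}\,\bigl|\det M_U\bigr|$ with $M_U$ the matrix of spine-edge vectors (in the $\rotationaxis_{\spineset}$-coordinates), and $\vol(\fold{\sigma}) = \frac{1}{e!}\,\bigl|\det M_V\bigr|$ with $M_V$ the $e\times e$ matrix whose columns are the last $e$ coordinates of $\vector{v}_1,\ldots,\vector{v}_e$.

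The key algebraic step is that, in the adapted coordinates, the matrix $M$ is block lower-triangular: its top-left $(\spinesetcard-1)\times(\spinesetcard-1)$ block is $M_U$ (the spine vectors have no $\rotationaxis_{\spineset}^\perp$-component), its bottom-right $e\times e$ block is $M_V$ (the components of $\vector{v}_i$ in $\rotationaxis_{\spineset}^\perp$), and the top-right block vanishes because $\vector{u}_2,\ldots,\vector{u}_\spinesetcard$ lie in $\rotationaxis_{\spineset}$. Hence $\det M = \det M_U \cdot \det M_V$, which gives $d!\,\vol(\sigma) = \bigl((\spinesetcard-1)!\,\vol(\spineset)\bigr)\bigl(e!\,\vol(\fold{\sigma})\bigr)$. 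Substituting $e = d-\spinesetcard+1$ and dividing by $d!$ yields
\[
\vol(\sigma) = \frac{(\spinesetcard-1)!\,(d-\spinesetcard+1)!}{d!}\,\vol(\spineset)\,\vol(\fold{\sigma}) = \binom{d}{\spinesetcard-1}^{-1}\vol(\spineset)\,\vol(\fold{\sigma}),
\]
which is exactly the claimed identity after multiplying through by $\binom{d}{\spinesetcard-1}$.

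I would then note the two things that make this rigorous rather than merely formal. One is that the choice of orthonormal adapted basis does not change any of the three volumes, since all three are intrinsic ($\vol(\sigma)$ is $d$-dimensional Lebesgue measure, $\vol(\spineset)$ is the $(\spinesetcard-1)$-dimensional volume inside the flat $\rotationaxis_{\spineset}$, and $\vol(\fold{\sigma})$ is $e$-dimensional volume in $\rotationaxis_{\spineset}^\perp$); an orthogonal change of coordinates has determinant $\pm 1$ and preserves each subspace's induced volume. The other is that $\fold{\sigma}$ really is a nondegenerate $e$-simplex and the $\vector{v}_i$ are genuinely $e$ in number — both of which are guaranteed by the discussion preceding the lemma (a maximal $\spineset$-spinal simplex has exactly the $\spinesetcard$ spine vertices plus $e$ others, and its projection has positive $e$-volume). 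The main obstacle, such as it is, is purely bookkeeping: keeping straight which edge vectors go in which block and making sure the block-triangular structure is stated for edge vectors from the common vertex $\vector{u}_1=\vector{0}$ rather than for the raw vertex coordinates. No genuine difficulty arises beyond this.
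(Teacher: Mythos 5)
Your proof is correct and reaches the same identity, but by a route genuinely different from the paper's. You pick an orthonormal basis adapted to $\rotationaxis_{\spineset}\oplus\rotationaxis_{\spineset}^\perp$, express each of the three volumes as a normalized determinant of edge vectors from the common vertex $\vector{u}_1=\vector{0}$, and then observe that the full $d\times d$ edge matrix is block-triangular, giving $\det M=\det M_U\cdot\det M_V$. The paper instead converts the simplices to parallelotopes via $\vol(\parallelotope{\tau})=k!\vol(\tau)$ and proves $\vol(\parallelotope{\sigma})=\vol(\parallelotope{\spineset})\vol(\parallelotope{\fold{\sigma}})$ by Cavalieri's principle (each cross-section of $\parallelotope{\sigma}$ parallel to $\rotationaxis_{\spineset}$ is a translate of $\parallelotope{\spineset}$) followed by Fubini. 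The two arguments are dual to each other --- Fubini over slices is essentially how one would prove the block-determinant identity --- but your linear-algebraic formulation is arguably more self-contained, since it avoids having to identify the shape of the slices and carries the nondegeneracy of $\fold{\sigma}$ for free from $\det M\neq 0$. One small bookkeeping slip: with your conventions (first $\spinesetcard-1$ coordinates span $\rotationaxis_{\spineset}$, first $\spinesetcard-1$ columns of $M$ are the spine edges), the block that vanishes is the \emph{bottom-left} one (the $\rotationaxis_{\spineset}^\perp$-components of $\vector{u}_2,\ldots,\vector{u}_\spinesetcard$), so $M$ is block \emph{upper}-triangular; the top-right block (the $\rotationaxis_{\spineset}$-components of the $\vector{v}_i$) need not vanish. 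The determinant still factors as $\det M_U\cdot\det M_V$, so the conclusion is unaffected, but the justification you gave ("because $\vector{u}_2,\ldots,\vector{u}_\spinesetcard$ lie in $\rotationaxis_{\spineset}$") belongs with the bottom-left block.
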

\begin{proof}
For a $k$-simplex $\tau=\{v_0,\ldots,v_k\}$, let $\parallelotope{\tau}$ denote the parallelotope
spanned by $v_1-v_0,\ldots,v_k-v_0$. It is well-known that $\vol(\parallelotope{\tau})=k!\vol(\tau)$.
Rewriting the claimed volume by expanding the binomial coefficient and noting that $d-(n-1)=e$ yields
\[\underbrace{d! \vol(\sigma)}_{\vol(\parallelotope{\sigma})} = \underbrace{(n-1)! \vol(\spineset)}_{\vol(\parallelotope{\spineset})}\underbrace{e!\vol(\fold{\sigma})}_{\vol(\parallelotope{\fold{\sigma}})}.\]
To prove the relation between the volumes of paralleotopes, we assume without loss of generality that $\vector{u}_1=\vector{0}$.
Let $\rotationaxis_{\spineset}$ denote the linear subspace spanned by $u_2,\ldots,u_n$, and let
$\rotationaxis_{\spineset}^{\vector{q}}$ denote the parallel affine subspace that contains $\vector{q}\in\rotationaxis_{\spineset}^\perp$.
Then, $\vol(\parallelotope{\sigma}\cap \rotationaxis_{\spineset})=\vol(\parallelotope{\spineset})$ by definition. By Cavalieri's principle, every parallel cross-section 
of $\parallelotope{\sigma}$ has the same volume. More precisely,
\[\vol(\parallelotope{\sigma}\cap \rotationaxis_{\spineset}^{\vector{q}}) = \begin{cases}  \vol(\parallelotope{\spineset}) & \text{if $\vector{q}\in \parallelotope{\fold{\sigma}}$}\\ 0 & \text{otherwise}\\ \end{cases}.\]
Using Fubini's theorem, the volume of $\parallelotope{\sigma}$ can be expressed as an integral over all cross-sections, which yields
\[\vol(\parallelotope{\sigma})=\int_{\vector{q}\in\rotationaxis_{\spineset}^\perp}\vol(\parallelotope{\sigma}\cap \rotationaxis_{\spineset}^{\vector{q}})dq
=\int_{\vector{q}\in\parallelotope{\fold{\sigma}}} \vol(\parallelotope{\spineset})dq=\vol(\parallelotope{\spineset})\vol(\parallelotope{\fold{\sigma}}).\qedhere\]
\end{proof}

\begin{lemma}
\label{lem:volume_formula_lift}
Let $\spineset$ be a spine of $\pointset$, and
$\triangulation$ denote the lift of a star triangulation of $\fold{\pointset}$
Then, with the notation of Lemma~\ref{lem:volume_formula_simplices},
\[\binom{d}{n-1} \vol\left(\underlyingspace{\liftedstartriangulation}\right) = \vol(\spineset) \vol(\fold{\polytope}),\]
where $\underlyingspace{\liftedstartriangulation}$ is the underlying space of $\liftedstartriangulation$.
In particular, the underlying spaces of the lifts of all star triangulations of $\fold{\pointset}$ have the same volume.
\end{lemma}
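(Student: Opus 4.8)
The plan is to sum the volume identity of Lemma~\ref{lem:volume_formula_simplices} over the maximal simplices of the lift. Since $\triangulation$ is the lift of a star triangulation $\fold{\triangulation}$ of $\fold{\pointset}$, its maximal simplices are $d$-simplices $\sigma_1,\ldots,\sigma_t$, each containing $\spineset$, whose folds $\fold{\sigma}_1,\ldots,\fold{\sigma}_t$ are exactly the maximal ($e$-)simplices of $\fold{\triangulation}$. By Lemma~\ref{lem:lift_is_simplicial}, $\triangulation$ is a simplicial complex, so the $\sigma_i$ have pairwise disjoint interiors and hence $\vol(\underlyingspace{\triangulation}) = \sum_{i=1}^t \vol(\sigma_i)$. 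Applying Lemma~\ref{lem:volume_formula_simplices} to each $\sigma_i$ and summing gives
\[
\binom{d}{n-1}\vol\left(\underlyingspace{\triangulation}\right) = \binom{d}{n-1}\sum_{i=1}^t\vol(\sigma_i) = \vol(\spineset)\sum_{i=1}^t\vol(\fold{\sigma}_i).
\]
It then remains to identify $\sum_{i=1}^t\vol(\fold{\sigma}_i)$ with $\vol(\fold{\polytope})$.

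For that last step I would invoke that $\fold{\triangulation}$ is a \emph{star triangulation of $\fold{\pointset}$} (the definition of the lift), so by definition of a triangulation its underlying space is $\conv(\fold{\pointset}) = \fold{\polytope}$ and its maximal simplices $\fold{\sigma}_i$ tile $\fold{\polytope}$ with disjoint interiors; thus $\sum_i\vol(\fold{\sigma}_i) = \vol(\fold{\polytope})$. One small point worth checking is that $\underlyingspace{\fold{\triangulation}}$ really equals $\fold{\polytope}$: this holds because $\fold{\pointset} = \linearmap_\spineset(\pointset)$ and $\linearmap_\spineset$ is linear, so $\conv(\fold{\pointset}) = \linearmap_\spineset(\conv(\pointset)) = \fold{\polytope}$. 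Combining, $\binom{d}{n-1}\vol(\underlyingspace{\triangulation}) = \vol(\spineset)\vol(\fold{\polytope})$, which is the claimed formula.

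The ``in particular'' clause is then immediate: the right-hand side $\vol(\spineset)\vol(\fold{\polytope})$ depends only on $\spineset$ and $\pointset$, not on the chosen star triangulation of $\fold{\pointset}$, so every lift has underlying space of volume $\binom{d}{n-1}^{-1}\vol(\spineset)\vol(\fold{\polytope})$.

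I do not expect a genuine obstacle here — the lemma is essentially an additivity/telescoping argument on top of Lemma~\ref{lem:volume_formula_simplices}. The only thing requiring a moment's care is justifying that interiors of the $\sigma_i$ (resp.\ $\fold{\sigma}_i$) are disjoint so that volumes add, but this is exactly what being a simplicial complex / triangulation buys us (Lemma~\ref{lem:lift_is_simplicial} for the lift, and the definition of a triangulation of $\fold{\pointset}$ for the star triangulation). Note this lemma does not yet assert that $\underlyingspace{\triangulation} = \polytope$ — that is the converse of Lemma~\ref{lem:fold_is_star}, and presumably follows next by comparing this volume with $\vol(\polytope)$ via the spine and Lemma~\ref{lem:spinal_triang_exists}.
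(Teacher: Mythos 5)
Your proof is correct and matches the paper's: the paper's own proof is the one-liner ``The statement follows directly from Lemma~\ref{lem:volume_formula_simplices} because the relation holds for any simplex in the star triangulation and its lift,'' which is exactly the additivity argument you spell out. The extra care you take — invoking Lemma~\ref{lem:lift_is_simplicial} for disjoint interiors, verifying $\conv(\fold{\pointset})=\fold{\polytope}$ via linearity of $\linearmap_\spineset$, and noting that the lemma does not yet claim $\underlyingspace{\triangulation}=\polytope$ — fills in precisely the details the paper leaves implicit.
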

\begin{proof}
The statement follows directly from Lemma~\ref{lem:volume_formula_simplices} because the relation holds for any simplex in the star triangulation and its lift.
\end{proof}

With that, we can prove our first main theorem.

\begin{maintheorem}[Lifting theorem]
\label{thm:lifting_theorem}
Let $\polytope$ be a polytope spanned by a full-dimensional finite point set $\pointset\subseteq\R^d$ in convex position and $\spineset\ce\{\vector{u}_1,\ldots,\vector{u}_\spinesetcard\}\subseteq\pointset$. Then, 
\[
\text{$\spineset$ is a spine of $\pointset$ if and only if there exists
a $\spineset$-spinal triangulation of $\pointset$.}\]
In this case, for $\spinesetcard\geq1$, the $\spineset$-spinal triangulations of $\pointset$ are exactly the lifts of the star triangulations of $\fold{\pointset}$, the 
orthogonal projection of $\pointset$ to the orthogonal complement of the affine space
spanned by $\spineset$.
Furthermore, if $\spinesetcard\geq 2$, 
\begin{align*}
\binom{d}{n-1}\vol(\polytope)=\vol(U)\vol(\fold{\polytope}).
\end{align*}
\end{maintheorem}

\begin{proof}
For the equivalence, 
the ``if''-part follows directly
from the geometric characterization
of spines (Lemma~\ref{lem:spine_facet_equiv}), and the ``only if'' part
follows from Lemma~\ref{lem:spinal_triang_exists}.

For the second part, given any $\spineset$-spinal triangulation $\triangulation^\ast$, 
its fold $\fold{\triangulation}^\ast$ is a star triangulation 
by Lemma~\ref{lem:fold_is_star}, and by lifting that star triangulation, we obtain
$\triangulation^\ast$ back. Vice versa, starting with any star triangulation
$\fold{\triangulation}$, we know that its lift is a simplicial complex contained in $\polytope$
by Lemma~\ref{lem:lift_is_simplicial}.
By Lemma~\ref{lem:volume_formula_lift}, the lifts
of $\fold{\triangulation}$ and $\fold{\triangulation}^\ast$ have the same volume, but the lift of the latter
is $\polytope$. It follows that also the lift of the former is a triangulation of $\polytope$,
proving the second claim.

The claim about the volumes follows at once by applying Lemma~\ref{lem:volume_formula_lift}
to $\triangulation^\ast$ and $\fold{\triangulation}^\ast$.
\end{proof}

We remark that not every $\spineset$-spinal triangulation is a pulling triangulation. This follows
from the fact that each pulling triangulation is \emph{regular} (see~\cite[p.181]{LoeraRambauSantos:2010}),
but examples of non-regular spinal triangulations are known (one such example
is given in~\cite[p.306]{LoeraRambauSantos:2010}).

%%%%%%%%%%%%%%%%%%%%%%%%%%%%%%%%%%%%%%%%%%%%%%%%%%%%%%%%%%%%

\section{The Everest polytope}
\label{sec:everest_polytope}

For $n,s\in\N$, define $\everestpolytopelong{n}{s}\ce\set{\vector{x}\in\R^{ns}\mid g_{n,s}(\vector{x})\leq1}$ where $g_{n,s}:\R^{ns}\to\R$ with
\begin{align*}
(x_{1,1},\ldots,x_{n,s})&\mapsto\sum_{j=1}^s\max\set{0,x_{1,j},\ldots,x_{n,j}}+\max\set{0,-\sum_{j=1}^sx_{1,j},\ldots,-\sum_{j=1}^sx_{n,j}}.
\end{align*}
It is not difficult to verify that 
$\everestpolytopelong{n}{s}$ is bounded and  
the intersection of finitely many halfspaces of $\R^{ns}$.
We call it the \emph{$(n,s)$-Everest polytope}.
It is well-known \cite{BarroeroFreiTichy:2011} that the number-theoretic constant
$c(n,s)$ discussed in the introduction is equal 
to the volume of $\everestpolytopelong{n}{s}$.

\paragraph{Vertex sets.}
In order to describe the vertices of $\everestpolytopelong{n}{s}$ we introduce the following point sets which we also utilize in later parts of the paper. 
Note that we identify $\R^{ns}$ and $\R^{n\times s}$ which explains the meaning of ``row'' and ``column'' in the definition.
Let $\standard_s(i)$ denote the $i$-th $s$-dimensional unit (row) vector 
with the convention that $\standard_s(0)=\vector{0}$. 
We define the following sets of points in $\R^{ns}$:
\begin{align*}
\verticesminusone{n}{s}&\ce
\left\{
\left(
\begin{matrix}
-\standard_s(j_1)\\
\vdots\\
-\standard_s(j_n)
\end{matrix}
\right)
\;\,
\right|
\;
\left.
j_1,\ldots,j_n\in\set{0,\ldots,s}
\vphantom{
\left(
\begin{matrix}
-\standard_s(j_1)\\
\vdots\\
-\standard_s(j_n)
\end{matrix}
\right)
}
\right\},\\
\verticeszero{n}{s}&\ce
\left\{
\left(
\begin{matrix}
-\standard_s(j)\\
\vdots\\
-\standard_s(j)
\end{matrix}
\right)
\;\,
\right|
\;
\left.
j\in\set{0,\ldots,s}
\vphantom{
\left(
\begin{matrix}
-\standard_s(j)\\
\vdots\\
-\standard_s(j)
\end{matrix}
\right)
}
\right\},\\
\verticesone{n}{s}&\ce\verticesminusone{n}{s}-(\verticeszero{n}{s}\setminus\set{\vector{0}})=\set{\vector{v}-\vector{u}\mid\vector{v}\in\verticesminusone{n}{s}\land\vector{u}\in\verticeszero{n}{s}\setminus\set{\vector{0}}}.
\end{align*}

\noindent
It can be readily verified that $\verticesminusone{n}{s}$ is the set of points in $\set{-1,0}^{ns}$ such that there is at most one $-1$ per row, $\verticeszero{n}{s}$ is the set of points in $\verticesminusone{n}{s}$ such that all $-1$'s (if there are any) are contained in a single column, and $\verticesone{n}{s}$ is the set of points in $\set{-1,0,1}^{ns}$ such that\\[0.5\baselineskip]
$\vphantom{a}\quad\bullet$ all '$1$'s (if there are any) are in a unique ''$1$-column``,\\
$\vphantom{a}\quad\bullet$ all entries of the $1$-column are either $0$ or $1$,\\
$\vphantom{a}\quad\bullet$ all rows with a $1$ contain at most one $-1$,\\
$\vphantom{a}\quad\bullet$ all rows without a $1$ contain only '$0$'s.

\begin{lemma}
\label{lem:vertices_card}
$\verticesone{n}{s}\cap\verticesminusone{n}{s}=\set{\vector{0}}$, $\verticeszero{n}{s}\subseteq\verticesminusone{n}{s}$, $\abs{\verticesminusone{n}{s}}=(s+1)^n$, $\abs{\verticeszero{n}{s}}=s+1$, and $\abs{\verticesone{n}{s}}=s(s+1)^n-s+1$.
\end{lemma}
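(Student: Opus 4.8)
The plan is to verify each of the five claims by unwinding the explicit combinatorial descriptions of $\verticesminusone{n}{s}$, $\verticeszero{n}{s}$, and $\verticesone{n}{s}$ given just above the lemma, so the proof is essentially a sequence of short counting arguments. First I would establish $\verticeszero{n}{s}\subseteq\verticesminusone{n}{s}$: a point of $\verticeszero{n}{s}$ has all its (identical) rows equal to $-\standard_s(j)$ for a single $j$, which in particular has at most one $-1$ per row, so it lies in $\verticesminusone{n}{s}$. Next, for $\verticesone{n}{s}\cap\verticesminusone{n}{s}=\set{\vector{0}}$ I would argue that a point in $\verticesminusone{n}{s}$ has no $+1$ entries, while a nonzero point of $\verticesone{n}{s}$ has, by the bullet-point characterization, at least one $+1$ entry (a nonzero element $\vector v-\vector u$ with $\vector u\in\verticeszero{n}{s}\setminus\set{\vector 0}$ necessarily has a $1$ in the column where $\vector u$ has its $-1$'s, unless that is cancelled in every row — and if it is cancelled in every row then $\vector v=\vector u$, forcing $\vector v-\vector u=\vector 0$); hence the only common point is $\vector 0$, which indeed lies in both sets (take all $j_i=0$, resp. $\vector v=\vector u$).

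For the cardinalities: $\abs{\verticesminusone{n}{s}}=(s+1)^n$ is immediate since a point is determined by the independent choice of $j_i\in\set{0,\ldots,s}$ for each of the $n$ rows, and distinct tuples $(j_1,\ldots,j_n)$ give distinct points (the map is injective because $\standard_s(0),\ldots,\standard_s(s)$ are pairwise distinct). Similarly $\abs{\verticeszero{n}{s}}=s+1$ since such a point is determined by the single index $j\in\set{0,\ldots,s}$.

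The only claim requiring a genuine (if still elementary) count is $\abs{\verticesone{n}{s}}=s(s+1)^n-s+1$, and I expect this to be the main obstacle — not because it is hard, but because one must avoid double-counting and handle the degenerate ($\vector 0$) case. My plan is to count the nonzero elements of $\verticesone{n}{s}$ directly from the four-bullet description. A nonzero point has a unique $1$-column, for which there are $s$ choices. Having fixed the $1$-column $c$, each row independently is either all-zero, or has a $1$ in column $c$ and at most one $-1$ somewhere among the remaining $s-1$ columns (or no $-1$ at all) — but at least one row must be of the second type. The number of choices per row is $1$ (all-zero) plus $s$ (a $1$ in column $c$ together with either no $-1$ or a $-1$ in one of the other $s-1$ columns), i.e. $s+1$ choices per row; subtracting the single all-zero-rows configuration gives $(s+1)^n-1$ valid configurations for a fixed $1$-column. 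Multiplying by the $s$ choices of $1$-column gives $s((s+1)^n-1)=s(s+1)^n-s$ nonzero points, and adding $\vector 0$ yields $s(s+1)^n-s+1$. I would also double-check that distinct $(1\text{-column},\text{row-pattern})$ pairs give distinct points, which holds because the $1$-column of a nonzero point is recoverable from the point itself, and then the row patterns are recoverable as well. This completes all five assertions.
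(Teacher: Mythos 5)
Your proof is correct and supplies exactly the routine verification that the paper compresses into the single sentence ``Follows directly from the definitions and from basic combinatorics.'' All five checks are sound: the inclusion $\verticeszero{n}{s}\subseteq\verticesminusone{n}{s}$ and the two cardinality claims for $\verticesminusone{n}{s}$, $\verticeszero{n}{s}$ are immediate bijections with tuples of indices; your argument that a nonzero element of $\verticesone{n}{s}$ must have a $+1$ (because if $\vector v-\vector u$ had no $+1$ in the column where $\vector u$ is supported, then $\vector v$ would be forced to equal $\vector u$) correctly pins down the intersection; and the count $s\bigl((s+1)^n-1\bigr)+1$ for $\verticesone{n}{s}$ is the natural one, with the required injectivity noted. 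Since the paper gives no argument at this level of detail, there is no competing method to compare against; your proof is simply the standard elaboration one would expect.
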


\begin{proof}
Follows directly from the definitions and from basic combinatorics.
\end{proof}

\begin{theorem}
\label{thm:everest_vertices}
The set of vertices of $\everestpolytopelong{n}{s}$ is given by $(\verticesone{n}{s}\cup\verticesminusone{n}{s})\setminus\set{\vector{0}}=(\verticesminusone{n}{s}-\verticeszero{n}{s})\setminus\set{\vector{0}}$.
\end{theorem}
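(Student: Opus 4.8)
The plan is to show the two sets coincide by verifying (i) the equality $(\verticesone{n}{s}\cup\verticesminusone{n}{s})\setminus\set{\vector{0}}=(\verticesminusone{n}{s}-\verticeszero{n}{s})\setminus\set{\vector{0}}$, which is essentially definitional, and (ii) that this common set is exactly the vertex set of $\everestpolytopelong{n}{s}$. For (i), note that $\verticesone{n}{s}=\verticesminusone{n}{s}-(\verticeszero{n}{s}\setminus\set{\vector{0}})$ by definition, and since $\vector{0}\in\verticeszero{n}{s}$ we have $\verticesminusone{n}{s}-\verticeszero{n}{s}=\verticesminusone{n}{s}\cup(\verticesminusone{n}{s}-(\verticeszero{n}{s}\setminus\set{\vector{0}}))=\verticesminusone{n}{s}\cup\verticesone{n}{s}$; removing $\vector{0}$ from both sides gives (i).

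The main work is (ii), and I would split it into two inclusions. First, I would show every point $\vector{v}$ in $(\verticesone{n}{s}\cup\verticesminusone{n}{s})\setminus\set{\vector{0}}$ is a vertex of $\everestpolytopelong{n}{s}$. One checks that $g_{n,s}(\vector{v})=1$ for each such $\vector{v}$ (using the explicit combinatorial description of $\verticesminusone{n}{s}$ and $\verticesone{n}{s}$: for a point of $\verticesminusone{n}{s}$ with at least one $-1$, each column sum is $\le 0$ and exactly one of the negated-column-sums equals $1$ while all per-column maxima vanish; for a point of $\verticesone{n}{s}$ one checks the $1$-column contributes $1$ to the sum of maxima and the remaining terms vanish), so each point lies on the boundary. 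To see it is actually a vertex, I would exhibit a supporting hyperplane meeting $\everestpolytopelong{n}{s}$ only in that point, equivalently find a linear functional $\ell$ with $\ell(\vector{x})<\ell(\vector{v})$ for all $\vector{x}\in\everestpolytopelong{n}{s}\setminus\set{\vector{v}}$; since $g_{n,s}$ is a maximum of finitely many linear functions, $\everestpolytopelong{n}{s}$ is a polytope and it suffices to identify the $ns$ linearly independent facet-defining inequalities tight at $\vector{v}$ — these come from the pieces of the two $\max$-terms, and I would verify their normals span $\R^{ns}$ by a direct rank computation keyed to the combinatorial pattern of $\vector{v}$.

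For the reverse inclusion, I would argue that every vertex of $\everestpolytopelong{n}{s}$ lies in $(\verticesone{n}{s}\cup\verticesminusone{n}{s})\setminus\set{\vector{0}}$. Here I would describe $\everestpolytopelong{n}{s}=\set{\vector{x}\mid g_{n,s}(\vector{x})\le 1}$ via its $H$-representation: writing out $\max\set{0,x_{1,j},\ldots,x_{n,j}}=\max_{i_j\in\set{0,\ldots,n}} x_{i_j,j}$ (with $x_{0,j}:=0$) for each column $j$, and $\max\set{0,-\sum_j x_{1,j},\ldots,-\sum_j x_{n,j}}=\max_{k\in\set{0,\ldots,n}}(-\sum_j x_{k,j})$, the condition $g_{n,s}\le 1$ unfolds into the finite system $\sum_{j=1}^s x_{i_j,j} - \sum_j x_{k,j}\le 1$ over all choices $(i_1,\ldots,i_s)\in\set{0,\ldots,n}^s$ and $k\in\set{0,\ldots,n}$. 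A vertex is the unique solution of $ns$ independent tight constraints from this list; I would then show, by analyzing which index choices can be simultaneously tight at a $0/1$-feasible point, that any such solution has all coordinates in $\set{-1,0,1}$ and satisfies the structural bullet-point description of $\verticesone{n}{s}$ or $\verticesminusone{n}{s}$. The cleanest route is probably: an arbitrary tight system forces, column by column, that at most one entry per column among the "1-side" rows is positive and the negative entries sit in rows selected by the single index $k$, matching exactly the shape of points in $\verticesminusone{n}{s}-\verticeszero{n}{s}$.

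The hard part will be the rank/independence bookkeeping in both directions — confirming that the claimed tight constraints at each candidate vertex really are $ns$ in number and linearly independent, and conversely that no other $0/1$-or-$\set{-1,0,1}$ pattern can arise as a vertex. This is combinatorial rather than deep, but it is the step most prone to case-splitting (on how many $-1$'s a row has, whether a $1$-column is present, and how the index $k$ in the "sum" term interacts with the column choices $i_j$), so I would organize it around the explicit bullet-point characterization of $\verticesone{n}{s}$ already provided, which tells us exactly which patterns to certify.
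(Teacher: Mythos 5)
Your proposal takes a genuinely different route from the paper's proof, in both directions of the inclusion. For "every vertex of $\everestpolytopelong{n}{s}$ lies in $\verticeseverest{n}{s}$," the paper does not write out the $H$-representation or count tight constraints. Instead it argues by perturbation (Lemmas~\ref{lem:everest_vertices_minusone}--\ref{lem:everest_vertices_all}): assuming $\vector{v}$ is a vertex, it shows that whenever some coordinate $v_{i,j}$ or some column maximum $m_j$ sits strictly between its extremal values, one can exhibit an explicit direction $\vector{x}$ and $\varepsilon>0$ with $g_{n,s}(\vector{v}\pm\delta\vector{x})=1$ for all $\delta\in[0,\varepsilon]$, contradicting vertex-hood; the case split is on whether some $m_j$ equals $1$ or all $m_j$ vanish. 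This avoids any rank certificate and only needs a one-dimensional wiggle direction at each offending configuration. For the converse, "every point of $\verticeseverest{n}{s}$ is a vertex," the paper again avoids identifying $ns$ linearly independent tight facets. After checking $g_{n,s}(\vector{v})=1$, it argues by contradiction: if $\vector{v}$ were interior to a positive-dimensional face with vertex $\vector{w}$, then $\alpha\vector{v}+(1-\alpha)\vector{w}$ with $\alpha>1$ should remain in the polytope; the paper verifies by a finite check in the base case $n=s=2$ that $g_{2,2}>1$ always holds there, and reduces the general case to this by passing to suitable $2$-row submatrices of $\vector{v},\vector{w}$ padded with zeros. This is slicker than a rank argument: it trades a global linear-algebra verification for a small finite check plus a monotonicity observation about $g_{n,s}$ restricted to submatrices.

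Your plan is correct in outline, but the step you defer as "combinatorial bookkeeping" hides the real content. In particular, for the inclusion $\SoVertices(\everestpolytopelong{n}{s})\subseteq\verticeseverest{n}{s}$ your sketch implicitly assumes that the unique solution of $ns$ independent tight constraints from the system $\sum_j x_{i_j,j}-\sum_j x_{k,j}\leq 1$ is automatically integral. The constraint matrix has entries in $\{-1,0,1\}$ but is not obviously totally unimodular, so integrality of the vertex coordinates needs its own argument; the paper's perturbation lemmas supply precisely that argument, directly and without linear algebra. Likewise, on the other inclusion, certifying that the tight normals at each candidate $\vector{v}$ actually span $\R^{ns}$ is a nontrivial case analysis keyed to the pattern of $\vector{v}$, which the paper's convexity trick sidesteps entirely. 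If you pursue your route, these two steps are where the genuine work lies and where the paper's approach is more economical.
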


\noindent
We split the proof into several parts which will be treated in the following lemmas. For the rest of this section, let $i$ and $j$ (with a possible subscript) denote elements of $\set{1,\ldots,n}$ and $\set{1,\ldots,s}$, respectively, let $\vector{v}=(v_{1,1},\ldots,v_{n,s})$ be a vertex of $\everestpolytopelong{n}{s}$, and set
\begin{align*}
m_j&\ce\max\set{0,v_{1,j},\ldots,v_{n,j}}\text{ for all $j$},\\
s_i&\ce-\sum_{j=1}^sv_{i,j}\text{ for all $i$},\\
m&\ce\max\set{0,s_1,\ldots,s_n}.
\end{align*}
Then
\begin{align*}
g_{n,s}\left(
\begin{matrix}
v_{1,1}&\cdots&v_{1,s}\\
\vdots&&\vdots\\
v_{n,1}&\cdots&v_{n,s}
\end{matrix}
\right)
&=
\max
\left\{
\begin{matrix}
0\\
v_{1,1}\\
\vdots\\
v_{n,1}
\end{matrix}
\right\}
+\cdots+
\max
\left\{
\begin{matrix}
0\\
v_{1,s}\\
\vdots\\
v_{n,s}
\end{matrix}
\right\}
+
\max
\left\{
\begin{matrix}
0\\
-v_{1,1}-\cdots-v_{1,s}\\
\vdots\\
-v_{n,1}-\cdots-v_{n,s}
\end{matrix}
\right\}\\
&=m_1+\cdots+m_s+m\\
&=1.
\end{align*}
Furthermore it can easily be verified that $v_{i,j}\in[-1,1]$ for all $i$ and $j$, $m_j\in[0,1]$ for all $j$, $s_i\in[-1,1]$ for all $i$, and $m\in[0,1]$.

In the proofs below, we will repeatedly apply the following argument: if there is an $\varepsilon>0$ and an $\vector{x}\in\R^{ns}$ such that $\vector{v}\pm\delta\vector{x}\in\everestpolytopelong{n}{s}$ for all $\delta\in[0,\varepsilon]$, $\vector{v}$ cannot be a vertex of $\everestpolytopelong{n}{s}$ (since it is in the interior of an at least $1$-dimensional face).

\begin{lemma}
\label{lem:everest_vertices_minusone}
If $m_j=0$ for all $j$ then $\vector{v}\in\verticesminusone{n}{s}$.
\end{lemma}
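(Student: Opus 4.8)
The plan is to assume $m_j = 0$ for all $j$ and show that $\vector{v}$ must have entries in $\set{-1,0}$ with at most one $-1$ per row; this is exactly the defining condition for membership in $\verticesminusone{n}{s}$. The hypothesis $m_j=0$ already forces $v_{i,j}\le 0$ for all $i,j$, since $m_j=\max\set{0,v_{1,j},\ldots,v_{n,j}}=0$. It also forces the first $s$ terms in the expression for $g_{n,s}(\vector{v})$ to vanish, so the vertex equation $g_{n,s}(\vector{v})=1$ collapses to $m = \max\set{0,s_1,\ldots,s_n} = 1$, where $s_i = -\sum_j v_{i,j} \ge 0$.

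The core of the argument is a perturbation argument of the type flagged in the paragraph preceding the lemma: if some entry $v_{i_0,j_0}$ lies strictly in the open interval $(-1,0)$, I will build a direction $\vector{x}$ along which $\vector{v}$ can be moved both ways while staying in $\everestpolytopelong{n}{s}$, contradicting that $\vector{v}$ is a vertex. Concretely, I would take $\vector{x}$ supported on a single row $i_0$. First suppose that row $i_0$ has two entries $v_{i_0,j_0}, v_{i_0,j_1} \in (-1,0)$ that are both strictly negative and strictly above $-1$; then let $\vector{x}$ have $+1$ in position $(i_0,j_0)$ and $-1$ in position $(i_0,j_1)$ and zeros elsewhere. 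For small $\delta$ this keeps every entry $\le 0$ (so all $m_j$ stay $0$) and leaves every row sum unchanged (so $m$ and hence $g_{n,s}$ are unchanged), giving the contradiction. The remaining case is a row $i_0$ with exactly one strictly-negative entry $v_{i_0,j_0}\in(-1,0)$ and all other entries in that row equal to $0$. Here $s_{i_0} = -v_{i_0,j_0}\in(0,1)$, so $s_{i_0}<1$; I would use $\vector{x} = \standard_s(j_0)$ placed in row $i_0$ (zeros elsewhere). For small $\delta$, moving to $\vector{v}-\delta\vector{x}$ keeps entries $\le 0$ and keeps $s_{i_0}+\delta \le 1$ while all other $s_i$ are unchanged, and moving to $\vector{v}+\delta\vector{x}$ keeps $s_{i_0}-\delta \ge 0$; in both directions all $m_j$ remain $0$ and $m$ is the max of the (still $\le 1$, still $\ge 0$) values $s_i$, but one must check $g_{n,s}$ stays exactly equal... and here is the subtlety.

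The main obstacle is precisely this last case: perturbing a lone negative entry changes $s_{i_0}$, hence potentially changes $m=\max\set{0,s_1,\ldots,s_n}$, so $g_{n,s}$ need not stay constant — it might strictly increase in one direction. The resolution is to observe that if $s_{i_0}$ is \emph{not} the unique maximizer, i.e.\ if $m$ is attained by some other $s_{i_1}$ with $s_{i_1}\ge s_{i_0}$, then small perturbations of $v_{i_0,j_0}$ leave $m$ unchanged and we again get a flat direction, contradiction; so we may assume $s_{i_0}$ is the strict maximizer and $s_{i_0}=m=1$, contradicting $s_{i_0}<1$. Hence no entry lies in $(-1,0)$, so all entries are in $\set{-1,0}$. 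Finally, to see there is at most one $-1$ per row: if row $i_0$ had two entries equal to $-1$, then $s_{i_0}\ge 2 > 1$, contradicting $s_i\in[-1,1]$. (If a row has no $-1$ its entries are all $0$, which is fine.) Therefore $\vector{v}\in\verticesminusone{n}{s}$, completing the proof.
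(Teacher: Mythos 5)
Your argument is essentially the paper's perturbation proof, with the two cases taken in the opposite order: the paper branches on $s_{i_0}<1$ versus $s_{i_0}=1$, you branch on whether the offending row has at least two entries in $(-1,0)$ or just one, and the perturbation directions used are the same. In the lone-entry case your detour through unique maximizers is not quite airtight as stated --- a tie $s_{i_0}=s_{i_1}=m$ would still break the claim that the perturbation leaves $m$ unchanged, and your dichotomy also silently omits the sub-case of a $-1$ sharing the row with the $(-1,0)$ entry --- but both loopholes are vacuous here since $s_{i_0}<1=m$ directly forces some other $s_{i_1}=1>s_{i_0}$ to strictly dominate the maximum, and $s_{i_0}\le 1$ rules out a second large negative entry in the row; it is cleaner to make these observations explicit, which is exactly what the paper's direct split on the value of $s_{i_0}$ achieves.
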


\begin{proof}
Since all $m_j$ are equal to zero, all $v_{i,j}$ have to be non-positive, so all $s_i$ are non-negative. Also we get that $m$ is equal to $1$ which implies that at least one of the $s_i$ is equal to $1$. Suppose that $v_{i_0,j_0}\in(-1,0)$ for some $i_0,j_0$. If $s_{i_0}<1$, then for $\varepsilon\ce\min\set{-v_{i_0,j_0},1-s_{i_0}}>0$ and $\delta\in[0,\varepsilon]$ we get that $g_{n,s}(v_{1,1},\ldots,v_{i_0,j_0}\pm\delta,\ldots,v_{n,s})=1$, so $(v_{1,1},\ldots,v_{i_0,j_0}\pm\delta,\ldots,v_{n,s})$ is on the boundary of $\everestpolytopelong{n}{s}$ and $\vector{v}$ cannot be a vertex of $\everestpolytopelong{n}{s}$. To see this more easily consider the example
\begin{flalign*}
\quad\vector{v}=\left(\begin{matrix}
-1&0&0&0\\
0&-1&0&0\\
-1/6&0&-2/3&0
\end{matrix}\right)
\longrightarrow
\left(\begin{matrix}
-1&0&0&0\\
0&-1&0&0\\
-1/6\pm\delta&0&-2/3&0
\end{matrix}\right).&&
\end{flalign*}
If on the other hand $s_{i_0}$ is equal to $1$, then there is a $j_1\neq j_0$ such that $v_{i_0,j_1}\in(-1,0)$. But then we get $g_{n,s}(v_{1,1},\ldots,v_{i_0,j_0}\pm\delta,\ldots,v_{i_0,j_1}\mp\delta,\ldots,v_{n,s})=1$ for $\varepsilon\ce\min\set{-v_{i_0,j_0},-v_{i_0,j_1},v_{i_0,j_0}+1,v_{i_0,j_1}+1}>0$ and $\delta\in[0,\varepsilon]$, so again $\vector{v}$ cannot be a vertex of $\everestpolytopelong{n}{s}$. Again we give an example
\begin{flalign*}
\quad\vector{v}=\left(\begin{matrix}
-1&0&0&0\\
0&-1&0&0\\
-1/6&-1/6&0&-2/3
\end{matrix}\right)
\longrightarrow
\left(\begin{matrix}
-1&0&0&0\\
0&-1&0&0\\
-1/6\pm\delta&-1/6\mp\delta&0&-2/3
\end{matrix}\right).&&
\end{flalign*}
Thus we get that all $v_{i,j}$ are either $-1$ or $0$ and it is clear that in any given row $i_0$ only one of the $v_{i_0,j}$ can be $-1$ (as they sum up to $-s_{i_0}\geq-1$). Therefore $\vector{v}\in\verticesminusone{n}{s}$.
\end{proof}

\begin{lemma}
\label{lem:everest_vertices_one}
If $m_{j_0}=1$ for some $j_0$ then $\vector{v}\in\verticesone{n}{s}$.
\end{lemma}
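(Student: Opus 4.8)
The plan is to first extract, from the identity $g_{n,s}(\vector v)=m_1+\cdots+m_s+m=1$ together with the hypothesis $m_{j_0}=1$, that $m_j=0$ for every $j\neq j_0$ and $m=0$. Write $J\ce j_0$. The vanishing of $m_j$ for $j\neq J$ gives $v_{i,j}\le 0$ for all $i$ and all $j\neq J$, and $m=0$ gives $s_i\le0$, i.e.\ $\sum_j v_{i,j}\ge0$, for all $i$; combining these, $v_{i,J}\ge\sum_{j\neq J}\abs{v_{i,j}}\ge0$, so, with $v_{i,J}\le1$, the whole column $J$ lies in $[0,1]$. Since $m_J=1$, some row has $v_{i,J}=1$, and I would split the rows into $I_1\ce\set{i:v_{i,J}=1}\neq\emptyset$ and $I_0\ce\set{i:v_{i,J}<1}$.

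Comparing with the bulleted description of $\verticesone{n}{s}$, it then suffices to prove two claims: \emph{(A)} every row in $I_0$ is identically zero, and \emph{(B)} for each $i\in I_1$ every entry $v_{i,j}$ with $j\neq J$ lies in $\set{-1,0}$, with at most one of them equal to $-1$; together these place $\vector v$ in $\verticesone{n}{s}$, with column $J$ as the $1$-column. Both claims I would prove by contraposition via the perturbation criterion recalled before Lemma~\ref{lem:everest_vertices_minusone}: assuming a claim fails, I construct $\vector x\in\R^{ns}$ supported on a single row with $\vector v\pm\delta\vector x\in\everestpolytopelong{n}{s}$ for all small $\delta\ge0$, contradicting that $\vector v$ is a vertex. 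Two elementary moves suffice. Move~(i): perturb one ``slack'' entry --- an off-column entry $v_{i,j}$ ($j\neq J$) with $v_{i,j}\in(-1,0)$, or the entry $v_{i,J}$ for a row $i\in I_0$; such a perturbation leaves every $m_j$ and $m_J$ unchanged (the perturbed coordinate is not an argmax of the corresponding maximum) and only shifts $s_i$, so it stays in $\everestpolytopelong{n}{s}$ as long as $s_i<0$. Move~(ii): perturb two entries of the same row by opposite amounts, chosen so that $\sum_j v_{i,j}$, hence $s_i$ and $m$, is also unchanged; this is available whenever both entries have room on both sides.

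For Claim (A), take $i\in I_0$. An off-column entry in $(-1,0)$ is impossible: if $v_{i,J}>0$ it is eliminated by Move~(ii) applied to it and $v_{i,J}$, while if $v_{i,J}=0$ then $\sum_{j\neq J}v_{i,j}\ge0$ with all summands $\le0$ forces the whole row to vanish. So all off-column entries lie in $\set{-1,0}$; but if $k\ge1$ of them equal $-1$ then $s_i=k-v_{i,J}\ge1-v_{i,J}>0$, contradicting $s_i\le0$, so all off-column entries are $0$. Then $s_i=-v_{i,J}$, and if $v_{i,J}>0$ then Move~(i) on $v_{i,J}$ contradicts vertexhood; hence $v_{i,J}=0$ and the row vanishes. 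For Claim (B), fix $i\in I_1$, so $s_i=-1+\sum_{j\neq J}\abs{v_{i,j}}\le0$. If $s_i<0$, Move~(i) rules out any off-column entry in $(-1,0)$, so those entries lie in $\set{-1,0}$ and their absolute values, being integers summing to less than $1$, are all $0$. If $s_i=0$, then $\sum_{j\neq J}\abs{v_{i,j}}=1$; were some off-column entry $v_{i,j_1}$ in $(-1,0)$, it would need a further negative off-column partner, either again in $(-1,0)$ --- removed by Move~(ii) on the pair --- or, if all other negative off-column entries equal $-1$ (say $k\ge0$ of them), then $\sum_{j\neq J}\abs{v_{i,j}}=k+\abs{v_{i,j_1}}$, which is never $1$ since $\abs{v_{i,j_1}}\in(0,1)$. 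Either way we contradict $\sum_{j\neq J}\abs{v_{i,j}}=1$, so all off-column entries lie in $\set{-1,0}$; summing to $-1$, exactly one equals $-1$.

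The step I expect to be the main obstacle is Claim (B) in the degenerate case $s_i=0$: there the column maximum $v_{i,J}=1$ cannot be increased and an off-column entry equal to $-1$ cannot be decreased, so the only admissible perturbations are the volume-preserving paired moves, and the argument must convert the \emph{unavailability} of such a move into an integrality obstruction on the off-column entries. The remaining verifications --- that each $\vector v\pm\delta\vector x$ still satisfies $g_{n,s}=1$ --- reduce to checking that every $\max$-term of $g_{n,s}$ is locally constant along the chosen direction, which is routine.
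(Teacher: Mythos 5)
Your proof is correct and uses the same underlying technique as the paper — the perturbation criterion, with single-entry and opposed-pair moves — but you organize it differently: instead of first establishing integrality of all entries (as the paper does, by appeal to the analogous argument in Lemma~\ref{lem:everest_vertices_minusone}, whose details it explicitly omits) and then checking the structural conditions for membership in $\verticesone{n}{s}$, you partition the rows into $I_0$ and $I_1$ and prove the target structure directly on each block, from which integrality falls out as a byproduct. This is a mild reorganization that trades the paper's ``integrality first, structure second'' flow for a ``row-by-row structure'' flow, and it fills in exactly the details the published proof waves at. One cosmetic slip in Claim~(B), case $s_i=0$: the sentence ``Either way we contradict $\sum_{j\neq J}\abs{v_{i,j}}=1$'' is inaccurate for the Move~(ii) branch, where what is contradicted is vertexhood, not the sum identity; the conclusion is unaffected.
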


\begin{proof}
Since $m_{j_0}$ is equal to $1$, all other $m_j$ and $m$ have to be equal to zero. Thus all $v_{i,j_0}$ are non-negative and at least one of them is equal to $1$. Also, all other $v_{i,j}$ (i.e. if $j\neq j_0$) are non-positive and so are all $s_i$. Just as in the proof of Lemma~\ref{lem:everest_vertices_minusone} we can show that all $v_{i,j}$ are either $-1$, $0$, or $1$; we omit the details. Furthermore it is clear that if $v_{i_0,j_0}$ is equal to $1$ for some $i_0$, then there cannot be more than one $-1$ in the $i_0$-th row (as $s_{i_0}\leq0$). By the same reasoning, if $v_{i_0,j_0}$ is equal to $0$, there cannot be any $-1$ in the $i_0$-th row at all. Considering the definition of $\verticesone{n}{s}$ we thus see that $\vector{v}\in\verticesone{n}{s}$.
\end{proof}

\begin{lemma}
\label{lem:everest_vertices_all}
If $m_j\neq1$ for all $j$ then $m_j=0$ for all $j$.
\end{lemma}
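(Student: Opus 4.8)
The plan is to argue by contradiction. Suppose $m_{j_0}\neq 0$ for some column $j_0$. Since $m_{j_0}\in[0,1]$ and, by hypothesis, $m_{j_0}\neq 1$, this forces $m_{j_0}\in(0,1)$. I will then produce a nonzero direction $\vector{x}\in\R^{ns}$ such that $g_{n,s}(\vector{v}+t\vector{x})=1$ for all sufficiently small $\abs{t}$. This contradicts $\vector{v}$ being a vertex, because $\vector{v}=\tfrac12\big((\vector{v}+\varepsilon\vector{x})+(\vector{v}-\varepsilon\vector{x})\big)$ then writes $\vector{v}$ as the midpoint of a genuine segment inside $\everestpolytopelong{n}{s}$. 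Recall throughout that $g_{n,s}(\vector{v})=m_1+\cdots+m_s+m=1$, so the strict inequality $m_{j_0}<1$ means there is ``leftover mass'' either in some other column or in the last term of $g_{n,s}$; this dichotomy drives the case split.

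\emph{First case: $m_{j_1}>0$ for some $j_1\neq j_0$.} Take $\vector{x}$ to be the $n\times s$ matrix whose $j_0$-column is identically $1$, whose $j_1$-column is identically $-1$, and which is zero elsewhere. Adding $t\vector{x}$ shifts the entire $j_0$-column up by $t$ and the entire $j_1$-column down by $t$; since $m_{j_0}$ and $m_{j_1}$ are attained at actual (positive) entries, for $\abs{t}<\min\{m_{j_0},m_{j_1}\}$ the corresponding maxima become exactly $m_{j_0}+t$ and $m_{j_1}-t$ and stay positive, while the other columns are untouched. Moreover, adding $\vector{x}$ leaves every row sum unchanged, hence all the $s_i$ and the last maximum $m$ are unchanged. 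Therefore $g_{n,s}(\vector{v}+t\vector{x})=(m_{j_0}+t)+(m_{j_1}-t)+\sum_{j\neq j_0,j_1}m_j+m=1$ for all such $t$.

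\emph{Second case: $m_j=0$ for all $j\neq j_0$.} Then $m_{j_0}+m=1$, so $m=1-m_{j_0}>0$ — this is the only place where $m_{j_0}<1$ is used. Take $\vector{x}$ to be the matrix whose $j_0$-column is identically $1$ and which is zero elsewhere. Adding $t\vector{x}$ turns the $j_0$-column maximum into $m_{j_0}+t$ and, since it decreases every $s_i$ by $t$, turns the last maximum into $m-t$; for $\abs{t}<\min\{m_{j_0},m\}$ both stay positive, and the remaining columns contribute $0$, so $g_{n,s}(\vector{v}+t\vector{x})=(m_{j_0}+t)+0+\cdots+0+(m-t)=m_{j_0}+m=1$. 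In both cases $\vector{x}\neq\vector{0}$, the required segment exists, and the contradiction follows; hence $m_{j_0}=0$ for every $j_0$.

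The only point needing a little care — and the routine computation I would actually carry out — is the ``local linearity'' invoked above: a sufficiently small uniform shift of a column whose maximum is realized by a positive entry changes that maximum by exactly the shift (and keeps it positive), and likewise a small uniform shift of all the $s_i$ changes $m$ by exactly the shift when $m>0$. This is the same elementary kind of perturbation already used in the proof of Lemma~\ref{lem:everest_vertices_minusone}, now applied at the level of whole columns rather than single entries, and it is exactly where the strict inequalities $m_{j_0}>0$ and (in the second case) $m>0$ are required.
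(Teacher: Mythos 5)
Your argument is correct and mirrors the paper's own proof: the same case split (whether some other column maximum $m_{j_1}$ is positive or all other $m_j$ vanish), the same two perturbation directions (a uniform shift of column $j_0$ alone, or of columns $j_0$ and $j_1$ in opposite senses), and the same midpoint/perturbation criterion for ruling out a vertex. You merely present the cases in the reverse order, phrase the perturbation as a single direction vector $\vector{x}$, and use a slightly tighter bound on the admissible $|t|$ in the two-column case, but the substance is identical.
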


\begin{proof}
We assume to the contrary that $m_{j_0}\in(0,1)$ for some $j_0$. Suppose that all other $m_j$ are equal to zero. Then $m=1-m_{j_0}\in(0,1)$ and for $\varepsilon\ce\min\set{m_{j_0},1-m_{j_0}}>0$ and $\delta\in[0,\varepsilon]$ we get that $g_{n,s}(v_{1,1},\ldots,v_{1,j_0}\pm\delta,\ldots,v_{1,s},\ldots,v_{n,1},\ldots,v_{n,j_0}\pm\delta,\ldots,v_{n,s})=1$, hence $\vector{v}$ is not a vertex of $\everestpolytopelong{n}{s}$. As an example consider
\begin{flalign*}
\quad\vector{v}=\left(\begin{matrix}
0&1/3&0&-2/3\\
0&-1/2&0&-1/4\\
0&1/6&-1/6&-2/3
\end{matrix}\right)
\longrightarrow
\left(\begin{matrix}
0&1/3\pm\delta&0&-2/3\\
0&-1/2\pm\delta&0&-1/4\\
0&1/6\pm\delta&-1/6&-2/3
\end{matrix}\right).&&
\end{flalign*}
If on the other hand there is another $m_{j_1}$ that is not equal to zero we get $g_{n,s}(x_{1,1},\ldots,x_{1,j_0}\pm\delta,\ldots,x_{1,j_1}\mp\delta,\ldots,x_{1,s},\ldots,x_{n,1},\ldots,x_{n,j_0}\pm\delta,\ldots,x_{n,j_1}\mp\delta,\ldots,x_{n,s})=1$ for $\varepsilon\ce\min\set{m_{j_0},m_{j_1},1-m_{j_0},1-m_{j_1}}>0$ and $\delta\in[0,\varepsilon]$, so again $\vector{v}$ is not a vertex of $\everestpolytopelong{n}{s}$. An example for this situation is
\begin{flalign*}
\quad\vector{v}=\left(\begin{matrix}
0&1/3&0&-1/6\\
0&-1/2&1/2&0\\
0&1/6&2/3&-2/3
\end{matrix}\right)
\longrightarrow
\left(\begin{matrix}
0&1/3\pm\delta&0\mp\delta&-1/6\\
0&-1/2\pm\delta&1/2\mp\delta&0\\
0&1/6\pm\delta&2/3\mp\delta&-2/3
\end{matrix}\right).&&
\end{flalign*}
\end{proof}

\begin{proof}[Proof of Theorem~\ref{thm:everest_vertices}]
Lemma~\ref{lem:everest_vertices_all} implies that if $\vector{v}$ is a vertex of $\everestpolytopelong{n}{s}$, then we are in the situation of either Lemma~\ref{lem:everest_vertices_minusone} or Lemma~\ref{lem:everest_vertices_one}, hence $\vector{v}\in\verticesone{n}{s}\cup\verticesminusone{n}{s}$. Furthermore it is clear that $\vector{0}$ is not a vertex of $\everestpolytopelong{n}{s}$. Also, it follows from the definition of $\verticesone{n}{s}$ that
\begin{align*}
\verticeseverest{n}{s}\ce(\verticesone{n}{s}\cup\verticesminusone{n}{s})\setminus\set{\vector{0}}&=
((\verticesminusone{n}{s}-(\verticeszero{n}{s}\setminus\set{\vector{0}}))\cup\verticesminusone{n}{s})\setminus\set{\vector{0}}=
(\verticesminusone{n}{s}-\verticeszero{n}{s})\setminus\set{\vector{0}}.
\end{align*}

We are left to show that $\verticeseverest{n}{s}\subseteq\SoVertices(\everestpolytopelong{n}{s})$. First we observe that $g_{n,s}(\vector{v})=1$ for all $\vector{v}\in\verticeseverest{n}{s}$. We consider the case that $n,s\geq2$ and assume that there is a $\vector{v}\in\verticeseverest{n}{s}$ that is not a vertex of $\everestpolytopelong{n}{s}$. Since $\vector{v}$ is on the boundary of $\everestpolytopelong{n}{s}$ but not a vertex of $\everestpolytopelong{n}{s}$, it is contained in the interior of an at least $1$-dimensional face $\face$ of $\everestpolytopelong{n}{s}$. Let $\vector{w}$ be any vertex of $\face$. Then $\vector{w}\in\verticeseverest{n}{s}$ and $\vector{v}\neq\vector{w}$.

Now let $\vector{a}\in\verticeseverest{2}{2}$, $\vector{a}\neq\vector{b}\in\verticeseverest{2}{2}\cup\set{\vector{0}}$, and consider the convex combination $\alpha\vector{a}+(1-\alpha)\vector{b}$, $\alpha\in\R$. By plugging in all possible values of $\vector{a}$ and $\vector{b}$ one can verify that if $\alpha>1$ then $g_{2,2}(\alpha\vector{a}+(1-\alpha)\vector{b})>1$.

Let $\vector{v}',\vector{w}'\in\R^{2\times s}$ be submatrices consisting of $2$ rows of $\vector{v}$ and $\vector{w}$ respectively, such that $\vector{v}'\neq\vector{0}$ and $\vector{v}'\neq\vector{w}'$. By definition of $\verticeseverest{n}{s}$, $\vector{v}'$ and $\vector{w}'$ thus respectively contain submatrices of the form $\vector{a}$ and $\vector{b}$ from above while the remaining entries are padded with zeros. It follows from the definition of $g_{n,s}$ that $g_{n,s}(\alpha\vector{v}+(1-\alpha)\vector{w})>1$ if $\alpha>1$, which contradicts the fact that $\vector{v}$ is in the interior of $\face$. Hence, $\verticeseverest{n}{s}\subseteq\SoVertices(\everestpolytopelong{n}{s})$ if $n,s\geq2$. If $n=1$ and $s\geq2$ one can proceed analogously by considering $\vector{a}\in\verticeseverest{1}{2}$, $\vector{a}\neq\vector{b}\in\verticeseverest{1}{2}\cup\set{\vector{0}}$; same goes for $s=1$ and $\vector{a}\in\verticeseverest{1}{1}$, $\vector{a}\neq\vector{b}\in\verticeseverest{1}{1}\cup\set{\vector{0}}$.
\end{proof}

\begin{corollary}
The number of vertices of $\everestpolytopelong{n}{s}$ is given by $(s+1)^{n+1}-s-1$.
\end{corollary}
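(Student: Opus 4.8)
The plan is to derive the vertex count directly from the explicit description of the vertex set established in Theorem~\ref{thm:everest_vertices}, namely $\verticeseverest{n}{s}=(\verticesminusone{n}{s}-\verticeszero{n}{s})\setminus\set{\vector{0}}=(\verticesone{n}{s}\cup\verticesminusone{n}{s})\setminus\set{\vector{0}}$, together with the cardinalities already recorded in Lemma~\ref{lem:vertices_card}.

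First I would use the union formula with inclusion--exclusion: since $\verticesone{n}{s}\cap\verticesminusone{n}{s}=\set{\vector{0}}$ by Lemma~\ref{lem:vertices_card}, we have $\abs{\verticesone{n}{s}\cup\verticesminusone{n}{s}}=\abs{\verticesone{n}{s}}+\abs{\verticesminusone{n}{s}}-1$. Plugging in $\abs{\verticesminusone{n}{s}}=(s+1)^n$ and $\abs{\verticesone{n}{s}}=s(s+1)^n-s+1$ from the same lemma gives $s(s+1)^n-s+1+(s+1)^n-1=(s+1)(s+1)^n-s=(s+1)^{n+1}-s$. Then I would subtract $1$ for removing $\vector{0}$ (which does lie in $\verticesone{n}{s}\cup\verticesminusone{n}{s}$, as $\vector{0}\in\verticeszero{n}{s}\subseteq\verticesminusone{n}{s}$), obtaining $\abs{\verticeseverest{n}{s}}=(s+1)^{n+1}-s-1$.

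There is essentially no obstacle here: the corollary is pure bookkeeping once Theorem~\ref{thm:everest_vertices} and Lemma~\ref{lem:vertices_card} are in hand. The only point requiring a moment's care is confirming that $\vector{0}$ is genuinely a member of $\verticesone{n}{s}\cup\verticesminusone{n}{s}$ before removing it (so that the final $-1$ is warranted and not double-counting), and this is immediate from $\standard_s(0)=\vector{0}$ in the definitions. Hence the proof is a one- or two-line computation citing the two earlier results.

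\begin{proof}
By Theorem~\ref{thm:everest_vertices}, the vertex set of $\everestpolytopelong{n}{s}$ is $(\verticesone{n}{s}\cup\verticesminusone{n}{s})\setminus\set{\vector{0}}$. By Lemma~\ref{lem:vertices_card}, $\verticesone{n}{s}\cap\verticesminusone{n}{s}=\set{\vector{0}}$, $\abs{\verticesminusone{n}{s}}=(s+1)^n$, and $\abs{\verticesone{n}{s}}=s(s+1)^n-s+1$. Hence
\[
\abs{\verticesone{n}{s}\cup\verticesminusone{n}{s}}=\bigl(s(s+1)^n-s+1\bigr)+(s+1)^n-1=(s+1)^{n+1}-s.
\]
Since $\vector{0}\in\verticeszero{n}{s}\subseteq\verticesminusone{n}{s}$, removing $\vector{0}$ gives $(s+1)^{n+1}-s-1$.
\end{proof}
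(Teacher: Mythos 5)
Your proof is correct and follows exactly the route the paper intends: the paper's own proof simply states that the count follows directly from Theorem~\ref{thm:everest_vertices} and Lemma~\ref{lem:vertices_card}, and your inclusion--exclusion computation is precisely the arithmetic that fills in that claim.
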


\begin{proof}
Follows directly from Theorem~\ref{thm:everest_vertices} and Lemma~\ref{lem:vertices_card}.
\end{proof}

%%%%%%%%%%%%%%%%%%%%%%%%%%%%%%%%%%%%%%%%%%%%%%%%%%%%%%%%%%%%

\section{Projections of simplotopes}
\label{sec:projections_of_simplotopes}

We will establish a relation between the Everest polytope $\everestpolytopelong{n}{s}$ and a special polytope known as simplotope.
This relation will allow the comparison of the volumes of the two polytopes 
even though they are of different dimension. 

\paragraph{Simplotopes.}
For $s\in\N$, the $s$-simplex $\simplex_s$ is spanned by the points $(\vector{0},-\standard_s(1),\ldots,-\standard_s(s))$ in $\R^{s}$, 
with $\standard_s(i)$ the $i$-th standard vector in $\R^s$, as before.
A \emph{simplotope} is a Cartesian product of the form $\simplex_{s_1}\times\ldots\times\simplex_{s_n}$ with positive integers $s_1,\ldots,s_n$. 
Note that in the literature, simplotopes are usually defined in a combinatorially equivalent way using the standard $s$-simplex 
spanned by $(s+1)$-unit vectors in $\R^{s+1}$. 
We restrict to the case that all $s_i$ are equal, 
and we call \[\simplotope_{n,s}=\underbrace{\simplex_s\times\ldots\times\simplex_s}_{\text{$n$ times}}\] the \emph{$(n,s)$-simplotope} for $n,s\in\N$.

For instance, $d$-hypercubes are $d$-fold products of line segments, and therefore $(n,s)$-simplotopes with $n=d$ and $s=1$.
It is instructive to visualize a point in $\simplotope_{n,s}$ as an $n\times s$-matrix with real entries in $[0,1]$, 
where the sums of the entries in each row do not exceed $1$. One can readily verify that the set of vertices of the $(n,s)$-simplotope is equal to $\verticesminusone{n}{s}$, as given in the beginning of Section~\ref{sec:everest_polytope}.
Moreover, it is straight-forward to verify that each facet of $\simplotope_{n,s}$ is given by
\[\underbrace{\simplex_s\times\ldots\times\simplex_s}_{\text{$i$ times}}\times \facet\times \underbrace{\simplex_s\times\ldots\times\simplex_s}_{\text{$n-i-1$ times}}\]
where $\facet$ is a facet of $\simplex_s$ and $0\leq i\leq n-1$. It follows at once:

\begin{theorem}
\label{thm:simplotope_spine}
$\verticeszero{n}{s}$ is a spine of $\verticesminusone{n}{s}=\SoVertices(\simplotope_{n,s})$.
\end{theorem}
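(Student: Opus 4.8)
First I would recall the combinatorial definition of a spine: $\verticeszero{n}{s}$ is a spine of $\SoVertices(\simplotope_{n,s})=\verticesminusone{n}{s}$ if and only if every facet of $\simplotope_{n,s}$ contains at least $\abs{\verticeszero{n}{s}}-1 = s$ of the $s+1$ points of $\verticeszero{n}{s}$ (using $\abs{\verticeszero{n}{s}}=s+1$ from Lemma~\ref{lem:vertices_card}). So the whole statement reduces to a direct inspection of the facets, which are already described in the text immediately preceding the theorem.

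Next I would use that facet description. Each facet of $\simplotope_{n,s}$ has the form $\simplex_s\times\cdots\times\facet\times\cdots\times\simplex_s$, where $\facet$ is a facet of one copy $\simplex_s$ sitting in position $i$ (with $0\le i\le n-1$), and all other factors are the full simplex $\simplex_s$. Now $\simplex_s$ is the simplex spanned by $(\vector{0},-\standard_s(1),\ldots,-\standard_s(s))$, so its facets are obtained by dropping exactly one of these $s+1$ vertices; hence a facet $\facet$ of $\simplex_s$ contains exactly $s$ of those $s+1$ vertices. The point is that the diagonal points of $\verticeszero{n}{s}$ are precisely the tuples $(-\standard_s(j),\ldots,-\standard_s(j))$ for $j\in\set{0,\ldots,s}$, i.e.\ the ``same vertex in every coordinate block''. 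Such a diagonal point lies in the facet $\simplex_s\times\cdots\times\facet\times\cdots\times\simplex_s$ if and only if its $i$-th block $-\standard_s(j)$ is a vertex of $\facet$; since $\facet$ omits exactly one vertex of $\simplex_s$, this fails for exactly one value of $j$. Therefore every facet of $\simplotope_{n,s}$ contains exactly $s$ of the $s+1$ points of $\verticeszero{n}{s}$, which is $\ge s=\abs{\verticeszero{n}{s}}-1$, so $\verticeszero{n}{s}$ is a spine. (In fact this shows $\verticeszero{n}{s}$ is even a \emph{special simplex} in the sense of the introduction, though the theorem only claims the spine property.)

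I also need the small sanity checks that $\verticeszero{n}{s}\subseteq\verticesminusone{n}{s}=\SoVertices(\simplotope_{n,s})$ — the first inclusion is Lemma~\ref{lem:vertices_card} and the identification of vertices with $\verticesminusone{n}{s}$ is stated in the text — and that the diagonal points are genuinely among the vertices of the product, which follows since a vertex of a product of polytopes is exactly a tuple of vertices of the factors. The only mild subtlety, and the one place I would be careful, is matching the indexing conventions: the facet factorization uses $0\le i\le n-1$ for an $n$-fold product, and $\standard_s(0)=\vector{0}$ plays the role of the ``$(s+1)$-st'' vertex of $\simplex_s$, so one must check that the vertex omitted by a facet of $\simplex_s$ really does range over all of $\set{0,1,\ldots,s}$ and thus kills exactly one diagonal point for each facet. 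Once the bookkeeping is set up this is immediate, so there is no serious obstacle; the proof is essentially "read off the facets".

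\begin{proof}
By Lemma~\ref{lem:vertices_card}, $\verticeszero{n}{s}\subseteq\verticesminusone{n}{s}$ and $\abs{\verticeszero{n}{s}}=s+1$, and $\verticesminusone{n}{s}=\SoVertices(\simplotope_{n,s})$ was observed above. By definition it therefore suffices to show that every facet of $\simplotope_{n,s}$ contains at least $s$ of the $s+1$ points of $\verticeszero{n}{s}$.

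The points of $\verticeszero{n}{s}$ are exactly the tuples $\vector{u}_j\ce(-\standard_s(j),\ldots,-\standard_s(j))$ for $j\in\set{0,\ldots,s}$, i.e.\ the diagonal of the product $\simplex_s\times\cdots\times\simplex_s$: in each of the $n$ coordinate blocks they carry the same vertex $-\standard_s(j)$ of $\simplex_s$ (recall $\standard_s(0)=\vector 0$, the ``$0$-th'' vertex of $\simplex_s$). As recalled before the theorem, every facet of $\simplotope_{n,s}$ has the form
\[
G_{i,\facet}\ce\underbrace{\simplex_s\times\cdots\times\simplex_s}_{i\text{ times}}\times \facet\times \underbrace{\simplex_s\times\cdots\times\simplex_s}_{n-i-1\text{ times}},
\]
where $0\le i\le n-1$ and $\facet$ is a facet of $\simplex_s$. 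Since $\simplex_s$ is spanned by its $s+1$ vertices $-\standard_s(0),\ldots,-\standard_s(s)$, each facet $\facet$ of $\simplex_s$ is the convex hull of exactly $s$ of these vertices, omitting exactly one; say $\facet$ omits $-\standard_s(j_0)$ for some $j_0\in\set{0,\ldots,s}$.

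Now $\vector{u}_j\in G_{i,\facet}$ if and only if its $i$-th block $-\standard_s(j)$ lies in $\facet$ (the remaining blocks lie in $\simplex_s$ automatically), and this holds precisely for $j\neq j_0$. Hence $G_{i,\facet}$ contains exactly the $s$ points $\set{\vector{u}_j\mid j\neq j_0}$ of $\verticeszero{n}{s}$. In particular every facet of $\simplotope_{n,s}$ contains at least $s=\abs{\verticeszero{n}{s}}-1$ points of $\verticeszero{n}{s}$, so $\verticeszero{n}{s}$ is a spine of $\verticesminusone{n}{s}=\SoVertices(\simplotope_{n,s})$.
\end{proof}
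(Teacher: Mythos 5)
Your proof is correct and takes the same approach the paper intends: the paper describes the facets of $\simplotope_{n,s}$ as $\simplex_s\times\cdots\times\facet\times\cdots\times\simplex_s$ and then simply asserts "it follows at once," while you spell out the (straightforward) counting argument showing each such facet contains exactly $s$ of the $s+1$ diagonal points of $\verticeszero{n}{s}$.
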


\paragraph{A linear transformation.}
We call the matrix
\begin{align*}
\setransform_{n,s}\ce
\left(\begin{matrix}
&-I_s\\
I_{ns}&\vdots\\
&-I_s
\end{matrix}\right)
=
\left(\begin{matrix}
1&&&&&&0&-1&&0\\
&\ddots&&&&&&&\ddots&\\
&&\ddots&&&&&0&&-1\\
&&&\ddots&&&&&\vdots&\\
&&&&\ddots&&&-1&&0\\
&&&&&\ddots&&&\ddots&\\
0&&&&&&1&0&&-1
\end{matrix}\right)\in\R^{(ns)\times((n+1)s)},
\end{align*}
where $I_d$ is the identity matrix of dimension $d$, the \emph{$(n,s)$-SE-transformation} (``SE'' stands for ``\textbf{S}implotope $\leftrightarrow$ \textbf{E}verest polytope''). We show that the name is justified, as it maps the $(n+1,s)$-simplotope onto the $(n,s)$-Everest polytope.

\begin{theorem}
\label{thm:simplotope_everest}
$\setransform_{n,s}(\SoVertices(\simplotope_{n+1,s})\setminus\verticeszero{n+1}{s})=\SoVertices(\everestpolytopelong{n}{s})$, $\verticeszero{n+1}{s}\setminus\set{\vector{0}}$ is a basis of $\ker(\setransform_{n,s})$ (in particular, $\setransform_{n,s}\verticeszero{n+1}{s}=\set{\vector{0}}$), and $\vector{0}$ is contained in the interior of $\everestpolytopelong{n}{s}$. In particular, $\setransform_{n,s}\simplotope_{n+1,s}=\everestpolytopelong{n}{s}$.
\end{theorem}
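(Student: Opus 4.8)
The plan is to establish the assertions of the theorem in turn — the kernel description (which immediately yields $\setransform_{n,s}\verticeszero{n+1}{s}=\set{\vector 0}$), then the vertex identity, then that $\vector 0$ lies in the interior — and finally to deduce $\setransform_{n,s}\simplotope_{n+1,s}=\everestpolytopelong{n}{s}$ by a convex-hull argument, needing no appeal to the Lifting Theorem. First I would record how $\setransform_{n,s}$ acts on a point written as an $(n+1)\times s$ matrix with rows $\vector x_1,\ldots,\vector x_{n+1}$: writing $\setransform_{n,s}$ as $I_{ns}$ followed by a vertical stack of $n$ copies of $-I_s$, it sends this point to the $n\times s$ matrix with rows $\vector x_1-\vector x_{n+1},\ldots,\vector x_n-\vector x_{n+1}$. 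Hence $\setransform_{n,s}\vector x=\vector 0$ exactly when all $n+1$ rows of $\vector x$ coincide, so $\ker(\setransform_{n,s})$ is the $s$-dimensional space of constant-row matrices; the nonzero points of $\verticeszero{n+1}{s}$ are the constant-row matrices with common row $-\standard_s(j)$, $j\in\set{1,\ldots,s}$, and these $s$ rows are linearly independent, so $\verticeszero{n+1}{s}\setminus\set{\vector 0}$ is a basis of $\ker(\setransform_{n,s})$ and in particular $\setransform_{n,s}\verticeszero{n+1}{s}=\set{\vector 0}$. It is worth noting, though not needed below, that $\ker(\setransform_{n,s})$ is precisely the linear span of the spine $\verticeszero{n+1}{s}$ of $\simplotope_{n+1,s}$ (Theorem~\ref{thm:simplotope_spine}) and that $\setransform_{n,s}$ has full rank $ns=e$, so $\setransform_{n,s}$ differs from the orthogonal projection $\linearmap_{\verticeszero{n+1}{s}}$ only by a linear isomorphism of $\R^{ns}$; thus the Everest polytope is, up to that isomorphism, the shadow of the simplotope.

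For the vertex identity, recall $\SoVertices(\simplotope_{n+1,s})=\verticesminusone{n+1}{s}$. A point $\vector v\in\verticesminusone{n+1}{s}$ has rows $-\standard_s(j_1),\ldots,-\standard_s(j_{n+1})$ with $j_i\in\set{0,\ldots,s}$, so $\setransform_{n,s}\vector v$ has rows $-\standard_s(j_i)+\standard_s(j_{n+1})$; that is, $\setransform_{n,s}\vector v=\vector w-\vector u$ with $\vector w\ce(-\standard_s(j_1);\ldots;-\standard_s(j_n))\in\verticesminusone{n}{s}$ and $\vector u$ the constant-row point with row $-\standard_s(j_{n+1})\in\verticeszero{n}{s}$. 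Conversely every element of $\verticesminusone{n}{s}-\verticeszero{n}{s}$ arises this way, so $\setransform_{n,s}(\verticesminusone{n+1}{s})=\verticesminusone{n}{s}-\verticeszero{n}{s}$. The points of $\verticesminusone{n+1}{s}$ sent to $\vector 0$ are exactly the constant-row ones, i.e.\ precisely $\verticeszero{n+1}{s}$ (which is contained in $\verticesminusone{n+1}{s}$ by Lemma~\ref{lem:vertices_card}), so deleting $\verticeszero{n+1}{s}$ from the domain deletes exactly $\vector 0$ from the image:
\[\setransform_{n,s}\bigl(\verticesminusone{n+1}{s}\setminus\verticeszero{n+1}{s}\bigr)=(\verticesminusone{n}{s}-\verticeszero{n}{s})\setminus\set{\vector 0}=\verticeseverest{n}{s}=\SoVertices(\everestpolytopelong{n}{s}),\]
the last two equalities being Theorem~\ref{thm:everest_vertices}.

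Next, $g_{n,s}$ is continuous, being a finite sum of maxima of linear forms, and $g_{n,s}(\vector 0)=0<1$; hence $\set{\vector x\mid g_{n,s}(\vector x)<1}$ is an open neighbourhood of $\vector 0$ contained in $\everestpolytopelong{n}{s}$, so $\vector 0\in\operatorname{int}(\everestpolytopelong{n}{s})$. To conclude, an affine map carries the convex hull of a finite set to the convex hull of the image, so using $\simplotope_{n+1,s}=\conv(\verticesminusone{n+1}{s})$ we obtain $\setransform_{n,s}\simplotope_{n+1,s}=\conv\bigl(\setransform_{n,s}(\verticesminusone{n+1}{s})\bigr)=\conv\bigl(\verticeseverest{n}{s}\cup\set{\vector 0}\bigr)$; since $\vector 0$ lies in the interior of $\everestpolytopelong{n}{s}=\conv(\verticeseverest{n}{s})$ (Theorem~\ref{thm:everest_vertices}), it is redundant in this hull, giving $\setransform_{n,s}\simplotope_{n+1,s}=\conv(\verticeseverest{n}{s})=\everestpolytopelong{n}{s}$.

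The one place needing care is the bookkeeping with the two set differences in the vertex identity: one must verify that removing the spine points $\verticeszero{n+1}{s}$ from the domain removes exactly the origin — and nothing else — from the image. This rests on the observation that within $\verticesminusone{n+1}{s}$ the fiber of $\setransform_{n,s}$ over $\vector 0$ is precisely $\verticeszero{n+1}{s}$, which is just the kernel description restricted to the $\set{-1,0}$-matrices with at most one $-1$ per row. Everything else is routine linear algebra together with unwinding the combinatorial descriptions of $\verticesminusone{\cdot}{\cdot}$, $\verticeszero{\cdot}{\cdot}$, and $\verticeseverest{\cdot}{\cdot}$.
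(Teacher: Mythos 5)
Your proof is correct and follows essentially the same route as the paper's: compute the action of $\setransform_{n,s}$ on vertices of the simplotope, identify the images with the vertex set of the Everest polytope via Theorem~\ref{thm:everest_vertices}, check the kernel, and note $\vector 0$ is interior. Your formulation of the action row-wise (each row $\vector x_i$ goes to $\vector x_i-\vector x_{n+1}$) is a slightly cleaner packaging of the paper's entrywise computation with the $\vector r_{i,j}$, and you spell out the final ``in particular'' deduction $\setransform_{n,s}\simplotope_{n+1,s}=\everestpolytopelong{n}{s}$ (redundancy of $\vector 0$ in the convex hull) which the paper leaves implicit — both minor but welcome improvements in exposition, not a different method.
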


\begin{proof}
Let
\begin{align*}
\vector{v}=\left(
\begin{matrix}
-\standard_s(j_1)\\
\vdots\\
-\standard_s(j_{n+1})
\end{matrix}
\right)\in\verticesminusone{n+1}{s}\setminus\verticeszero{n+1}{s},\quad
\vector{u}=\left(
\begin{matrix}
-\standard_s(j_0)\\
\vdots\\
-\standard_s(j_0)
\end{matrix}
\right)\in\verticeszero{n+1}{s},
\end{align*}
where $j_1,\ldots,j_{n+1}\in\set{0,\ldots,s}$ not all equal and $j_0\in\set{0,\ldots,s}$. First we note that any row of $\setransform_{n,s}$ is of the form
\begin{align*}
\vector{r}_{i,j}\ce(\underbrace{\vector{0},\ldots,\vector{0}}_{\overset{i-1}{\text{times}}},\standard_s(j),\underbrace{\vector{0},\ldots,\vector{0}}_{\overset{n-i}{\text{times}}},-\standard_s(j))\in\R^{(n+1)s}\simeq\R^{(n+1)\times s},
\end{align*}
where $i\in\set{1,\ldots,n}$, $j\in\set{1,\ldots,s}$, and $\vector{0}$ is the $s$-dimensional zero (row-)vector. For some $i$ and $j$, we can thus compute
\begin{align*}
\vector{r}_{i,j}\vector{v}&=\standard_s(j)(-\standard_s(j_i))-\standard_s(j)(-\standard_s(j_{n+1}))=\delta_{j_{n+1},j}-\delta_{j_i,j},
\end{align*}
where $\delta_{x,y}$ for $x,y\in\R$ is the Kronecker delta function, i.e. $\delta_{x,y}\in\set{0,1}$ and $\delta_{x,y}=1$ iff $x=y$. But then we get
\begin{align*}
\setransform_{n,s}\vector{v}&=
\left(
\begin{matrix}
\delta_{j_{n+1},1}-\delta_{j_1,1}&\ldots&\delta_{j_{n+1},s}-\delta_{j_1,s}\\
\vdots&&\vdots\\
\delta_{j_{n+1},1}-\delta_{j_n,1}&\ldots&\delta_{j_{n+1},s}-\delta_{j_n,s}
\end{matrix}
\right)
=
\left(
\begin{matrix}
\delta_{j_{n+1},1}&\ldots&\delta_{j_{n+1},s}\\
\vdots&&\vdots\\
\delta_{j_{n+1},1}&\ldots&\delta_{j_{n+1},s}
\end{matrix}
\right)-
\left(
\begin{matrix}
\delta_{j_1,1}&\ldots&\delta_{j_1,s}\\
\vdots&&\vdots\\
\delta_{j_n,1}&\ldots&\delta_{j_n,s}
\end{matrix}
\right)\\
&=
\left(
\begin{matrix}
\standard_s(j_{n+1})\\
\vdots\\
\standard_s(j_{n+1})
\end{matrix}
\right)
-
\left(
\begin{matrix}
\standard_s(j_1)\\
\vdots\\
\standard_s(j_n)
\end{matrix}
\right)
=
\left(
\begin{matrix}
-\standard_s(j_1)\\
\vdots\\
-\standard_s(j_n)
\end{matrix}
\right)
-
\left(
\begin{matrix}
-\standard_s(j_{n+1})\\
\vdots\\
-\standard_s(j_{n+1})
\end{matrix}
\right)
.
\end{align*}
If $j_{n+1}\neq0$ we thus get that $\setransform_{n,s}\vector{v}\in\verticesminusone{n}{s}-(\verticeszero{n}{s}\setminus\set{\vector{0}})$ and $\setransform_{n,s}\vector{v}\neq\vector{0}$ (since $j_1,\ldots,j_{n+1}$ are not all equal), hence $\setransform_{n,s}\vector{v}\in\verticesone{n}{s}\setminus\set{\vector{0}}$ by definition of $\verticesone{n}{s}$. If, on the other hand, $j_{n+1}=0$ then $\setransform_{n,s}\vector{v}\in\verticesminusone{n}{s}\setminus\set{\vector{0}}$. Altogether we can conclude
\begin{align*}
\setransform_{n,s}(\SoVertices(\simplotope_{n+1,s})\setminus\verticeszero{n+1}{s})
&=
\setransform_{n,s}(\verticesminusone{n+1}{s}\setminus\verticeszero{n+1}{s})
=
(\verticesone{n}{s}\cup\verticesminusone{n}{s})\setminus\set{\vector{0}}
=
\SoVertices(\everestpolytopelong{n}{s}).
\end{align*}
On the other hand we get
\begin{align*}
\vector{r}_{i,j}\vector{u}=\standard_s(j)(-\standard_s(j_0))-\standard_s(j)(-\standard_s(j_0))=0,
\end{align*}
hence $\setransform_{n,s}\verticeszero{n+1}{s}=\set{\vector{0}}$. Clearly $\verticeszero{n+1}{s}\setminus\set{\vector{0}}$ is linearly independent and $\vector{0}$ is an interior point of $\everestpolytopelong{n}{s}$.
\end{proof}

We are now ready to prove a formula for the Everest polytope.

\begin{maintheorem}
\label{maintheorem1}
The volume of the $(n,s)$-Everest polytope is given by
\[\vol(\everestpolytopelong{n}{s})= \frac{((n+1)s)!}{(ns)!(s!)^{n+1}}.\]
\end{maintheorem}
\begin{proof}
We want to apply the Lifting Theorem (Main Theorem~\ref{thm:lifting_theorem}) 
with $\polytope\gets\simplotope_{n+1,s}$, $\fold{\polytope}\gets\everestpolytopelong{n}{s}$,
$d\gets (n+1)s$ and $e\gets ns$.
However, a minor modification is needed, because the SE-transformation $\setransform_{n,s}$
is not a projection matrix. So, let $\tilde{\setransform}_{n,s}\ce
\left(\begin{matrix}
\setransform_{n,s}\\
0\;I_{s}
\end{matrix}\right)\in\R^{((n+1)s)\times((n+1)s)}$, $\Pi\ce\left(I_{ns}\;0\right)\in\R^{(ns)\times((n+1)s)}$, and let $\tilde{\simplotope}_{n+1,s}\ce\tilde{\setransform}_{n,s}\simplotope_{n+1,s}$ denote the transformed simplotope.
Clearly, $\vol(\tilde{\simplotope}_{n+1,s})=\vol(\simplotope_{n+1,s})$, $\Pi\tilde{\simplotope}_{n+1,s}=\everestpolytopelong{n}{s}$,
and the transformed spine points $\tilde{\spineset}_{n+1,s}\ce\tilde{\setransform}_{n,s}\spineset_{n+1,s}$ span the kernel of $\Pi$. Using Main Theorem~\ref{thm:lifting_theorem}
on $\tilde{\simplotope}_{n+1,s}$ and $\everestpolytopelong{n}{s}$, we obtain
\begin{eqnarray*}
\binom{(n+1)s}{ns} \vol(\tilde{\simplotope}_{n+1,s})=\vol(\tilde{\spineset}_{n+1,s}) \vol(\everestpolytopelong{n}{s}).
\end{eqnarray*}
Furthermore, $\vol(\tilde{\spineset}_{n+1,s})=\frac{1}{s!}$ since $\tilde{\spineset}_{n+1,s}=
\left\{
\left(\vector{0},\ldots,\vector{0},-\standard_s(j)
\right)
\mid
j\in\set{0,\ldots,s}
\right\}$. Moreover, $\simplotope_{n+1,s}$ is the $(n+1)$-fold product of simplices $\simplex_s$ spanned by $\vector{0}$ and $s$ unit vectors.
Hence, the volume of $\simplex_s$ is $1/s!$, and by Fubini's theorem,
\[\vol(\tilde{\simplotope}_{n+1,s})=\vol(\simplotope_{n+1,s})=\left(\vol(\simplex_s)\right)^{n+1} = \frac{1}{(s!)^{n+1}}.\]
Plugging in the formulas for $\vol(\tilde{\spineset}_{n+1,s})$ and $\vol(\tilde{\simplotope}_{n+1,s})$ into the formula given by the lifting theorem the claim follows by rearranging terms.
\end{proof}

%%%%%%%%%%%%%%%%%%%%%%%%%%%%%%%%%%%%%%%%%%%%%%%%%%%%%%%%%%%%

\section{Projecting the Birkhoff polytope}
\label{sec:birkhoff}

We apply our findings to the family of \emph{Birkhoff polytopes} which are among the most studied
objects in polytope theory~\cite{Ziegler:2007,Pak:2000,LoeraLiuYoshida:2009}.
Because of the difficulty of determining their volumes, it is natural to ask whether
our Lifting Theorem yields new insights for Birkhoff polytopes as well.
In this section we use a (spinal) projection of the $n$-th Birkhoff polytope $\birkhoff_n$
to arrive at a projected polytope $\hat{\birkhoff}_n$ whose volume is linked to the
volume of $\birkhoff_n$ in a direct way. Our construction is not canonical, and other
variants of $\hat{\birkhoff}_n$ can be obtained, relating to the volume of the Birkhoff polytope
in a similar way.
The goal of this section is to exemplify one choice of $\hat{\birkhoff}_n$ for which
all boundary vertices can be effectively computed. 

Recall the definition of the $n$-th Birkhoff polytope $\birkhoff_n$ as the convex hull 
of all $(n\times n)$ permutation matrices, where we identify a permutation matrix
with a point in $\R^{n^2}$ by concatenating its rows. As noted by de~Wolff~\cite[Example 2.6]{Wolff:UP},
the Birkhoff polytope contains a spine: with
\begin{align*}
\vector{u}\ce
\left(
\begin{matrix}
0&1&0&\cdots&0\\
\vdots&\ddots&\ddots&\ddots&\vdots\\
0&\cdots&0&1&0\\
0&\cdots&\cdots&0&1\\
1&0&\cdots&\cdots&0
\end{matrix}
\right)
\in\R^{n\times n},
\end{align*}
the set
\begin{align*}
\birkhoffspine_n:=\set{\vector{u}^k\mid k\in\set{0,\ldots,n-1}}
\end{align*}
is a spine of $\birkhoff_n$ with $n$ points. 

While embedded in $\R^{n^2}$,
$\birkhoff_n$ is only $(n-1)^2$-dimensional; since our Lifting Theorem assumes the polytope
to be full-dimensional, our first step is to embed $\birkhoff_n$ in a subspace
where it is full-dimensional. A simple way to achieve this is to remove the last row and column of
each permutation matrix, thus identifying a vertex of $\birkhoff_n$ with a point
in $\R^{(n-1)^2}$. Note that a missing row and column of a permutation matrix can be recovered
by the remaining entries, so this vertex mapping is bijective.

Let $A_n$ denote the matrix that realizes the above linear map $\R^{n^2}\rightarrow\R^{(n-1)^2}$.
It can be shown that $A_n\birkhoff_n$ is indeed full-dimensional. In fact, we can specify
a matrix $B_n$ defining a linear map $\R^{(n-1)^2}\rightarrow\R^{n^2}$ in the opposite direction
such that the composition $B_nA_n$ maps $\birkhoff_n$ to a translation of $\birkhoff_n$.
Moreover, the square root of the determinant of $B_n^TB_n$ specifies the difference in volume of $\birkhoff_n$
and $A_n\birkhoff_n$. 

\begin{lemma}
\label{birkhoff_lemma_1}
$\vol(\birkhoff_n)=n^{n-1}\cdot\vol(A_n\birkhoff_n)$.
\end{lemma}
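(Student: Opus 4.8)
The plan is to produce the explicit matrices $A_n$ and $B_n$ described in the surrounding text and then invoke the standard volume-distortion formula $\vol(M\polytope)=\sqrt{\det(M^TM)}\,\vol(\polytope)$ for an injective linear map $M$ on a full-dimensional polytope. First I would make the bijective vertex correspondence precise: deleting the last row and column of an $n\times n$ permutation matrix gives a $(n-1)\times(n-1)$ matrix from which the deleted entries are uniquely recoverable, since the missing last column is determined by the requirement that every row sums to $1$, and the missing last row by the requirement that every column sums to $1$ (the bottom-right entry being consistent). This shows $A_n$ restricted to $\birkhoff_n$ is affine-injective, so $A_n\birkhoff_n$ is a polytope of dimension $(n-1)^2$, i.e. full-dimensional in $\R^{(n-1)^2}$.

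The heart of the argument is constructing the reverse map $B_n:\R^{(n-1)^2}\to\R^{n^2}$. I would define it exactly by the recovery rule above: given the truncated $(n-1)\times(n-1)$ block with entries $x_{ij}$, set the last-column entries to $1-\sum_{j<n}x_{ij}$, the last-row entries to $1-\sum_{i<n}x_{ij}$, and the corner entry to $1-\sum_{i,j<n}x_{ij}+\text{(correction)}$ so that row/column sums are consistent; this is a well-defined affine map, and on the image $A_n\birkhoff_n$ its linear part composed with $A_n$ returns each doubly-stochastic matrix up to the fixed translation coming from the constant terms. Hence $B_nA_n$ restricted to $\birkhoff_n$ is a translation, so $A_n$ is injective on the affine hull of $\birkhoff_n$ and $B_n$ is a left inverse of $A_n$ there. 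Then $\vol(\birkhoff_n)=\sqrt{\det(B_n^TB_n)}\,\vol(A_n\birkhoff_n)$, reducing the lemma to the determinant computation $\det(B_n^TB_n)=n^{2(n-1)}$.

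The main obstacle is precisely this determinant evaluation. I expect $B_n$ to have a block structure: an identity block of size $(n-1)^2$ (the truncated entries map to themselves), plus rows encoding the linear forms $-\sum_{j<n}x_{ij}$ for the last column, $-\sum_{i<n}x_{ij}$ for the last row, and $\sum_{i,j<n}x_{ij}$ for the corner. So $B_n^TB_n = I_{(n-1)^2} + C$ where $C$ is a low-rank-structured Gram contribution from these extra rows. I would compute $\det(I+C)$ either by the matrix determinant lemma / Sylvester's identity applied to the $(2n-1)$ extra rows, or by recognizing $B_n^TB_n$ as a Kronecker-type expression: the last-column and last-row constraints each contribute an ``all-ones plus identity''-style factor of the form $I_{n-1}+J_{n-1}$ on the respective index, whose determinant is $n$, raised to the power $n-1$ from the $n-1$ independent rows/columns, giving $n^{n-1}\cdot n^{n-1}=n^{2(n-1)}$ after the corner term is absorbed. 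Taking the square root yields the factor $n^{n-1}$. The bookkeeping of how the corner entry interacts with both the last row and last column is the delicate point; I would handle it by writing $B_n$ explicitly for general $n$ and verifying the Gram matrix factors as a Kronecker product $\bigl(I_{n-1}+J_{n-1}\bigr)\otimes(\text{something})$ or, failing a clean factorization, by a direct cofactor expansion exploiting the sparsity.
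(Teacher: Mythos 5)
Your proposal is correct and follows essentially the same route as the paper: construct the reconstruction map $B_n$ so that $B_n A_n$ acts as a translation on $\birkhoff_n$, then apply $\vol(\birkhoff_n)=\sqrt{\det(B_n^TB_n)}\,\vol(A_n\birkhoff_n)$ and evaluate the Gram determinant. Your Kronecker guess does in fact pan out cleanly: with $m=n-1$ and $\mathbf{1}_m$ the all-ones matrix, the paper's computation gives $B_n^TB_n$ as the block matrix with diagonal blocks $2(I_m+\mathbf{1}_m)$ and off-diagonal blocks $I_m+\mathbf{1}_m$, which is exactly $(I_m+\mathbf{1}_m)\otimes(I_m+\mathbf{1}_m)$, so $\det(B_n^TB_n)=n^m\cdot n^m=n^{2(n-1)}$; the paper instead proves and applies an equivalent ad hoc block-determinant lemma, but the Kronecker reading is the tidier way to finish your argument.
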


We refer to Appendix~\ref{app:details_on_birkhoff} for the (lengthy) definitions
of $A_n$ and $B_n$ and the proof of the statement.

\medskip

Our goal is to project the (stretched) Birkhoff polytope $A_n\birkhoff_n$ using its spine.
To simplify this process, we apply an affine transformation such that the spine contains the origin,
and the remaining $(n-1)$ spine points are the first $(n-1)$ coordinate vectors $\vector{e}_1,\ldots,\vector{e}_{n-1}$.
Clearly, there exists a matrix $C_n$ and a translation vector $\vector{b}_n$ that realize these properties.
Hence, we arrive at the polytope $C_nA_n\birkhoff_n+\vector{b}_n$ which has the spine $(\vector{0},\vector{e}_1,\ldots,\vector{e}_{n-1})$
and whose volume is related to $\vol(\birkhoff_n)$ by a factor of $n^{n-1}\abs{\det(C_n)}$ where

\begin{lemma}
\label{birkhoff_lemma_2}
$\abs{\det(C_n)}=1$.
\end{lemma}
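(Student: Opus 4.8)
The plan is to track the matrix $C_n$ explicitly and show its determinant is $\pm1$ by exhibiting it as a composition of unimodular operations. Recall that $C_n$ together with $\vector{b}_n$ must send the spine $(\vector{u}^0,\ldots,\vector{u}^{n-1})$ of $A_n\birkhoff_n$ (after the embedding of Lemma~\ref{birkhoff_lemma_1}) to the standard partial frame $(\vector{0},\vector{e}_1,\ldots,\vector{e}_{n-1})$ in $\R^{(n-1)^2}$. First I would fix the translation $\vector{b}_n\ce -A_n\vector{u}^0$ so that the image of $\vector{u}^0$ is the origin; this does not affect the determinant. After translation the spine points become $\vector{w}_k\ce A_n(\vector{u}^k-\vector{u}^0)$ for $k=1,\ldots,n-1$, which are $n-1$ linearly independent vectors in $\R^{(n-1)^2}$ (independence follows from the fact that $U_n$ is a spine, hence affinely independent, and $A_n$ is injective on the affine hull of $\birkhoff_n$). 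The map $C_n$ is then any linear map with $C_n\vector{w}_k=\vector{e}_k$; to pin down a determinant I must pin down a choice, so I would complete $\vector{w}_1,\ldots,\vector{w}_{n-1}$ to a basis and declare $C_n$ to be the inverse of the matrix whose columns are that basis.

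The key observation making the determinant computable is that the entries of the $\vector{w}_k$ are extremely simple: $\vector{u}^k-\vector{u}^0$ is a difference of two $0/1$ permutation matrices, so it is a $\{-1,0,1\}$-matrix, and $A_n$ merely deletes the last row and column, so each $\vector{w}_k$ is a $\{-1,0,1\}$-vector. I would write these vectors out concretely using the cyclic structure of $\vector{u}$ (the matrix $\vector{u}^k$ has ones exactly in positions $(i,i+k \bmod n)$), read off the pattern of $\pm1$'s in the truncated coordinates, and then choose the completion of the basis to consist of standard vectors $\vector{e}_j$ for the coordinates not ``used'' by the spine. With that choice the basis matrix is, after a permutation of columns, block lower- or upper-triangular: the spine columns occupy a coordinate block on which they form a triangular $\{-1,0,1\}$ matrix with $\pm1$ on the diagonal (because $\vector{w}_k$ introduces a new nonzero coordinate not present in $\vector{w}_1,\ldots,\vector{w}_{k-1}$), while the remaining columns are standard basis vectors. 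Hence the determinant of the basis matrix is $\pm1$, and therefore $\abs{\det(C_n)}=1$.

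Concretely I would carry out the steps in this order: (1) fix $\vector{b}_n$ and reduce to computing $\det$ of the basis matrix $M_n$ whose first $n-1$ columns are $\vector{w}_1,\ldots,\vector{w}_{n-1}$; (2) compute the entries of $\vector{w}_k$ from the permutation formula for $\vector{u}^k$, noting which coordinate indices in $\R^{(n-1)^2}$ carry $+1$, which carry $-1$, and which are zero; (3) identify for each $k$ a ``pivot'' coordinate that is nonzero in $\vector{w}_k$ but zero in all $\vector{w}_{k'}$ with $k'<k$ — the natural candidate is a coordinate coming from the $k$-th super/sub-diagonal of the permutation matrix that survives truncation; (4) complete $\{\vector{w}_k\}$ to a basis by adjoining $\vector{e}_j$ for all coordinates $j$ that are not pivots, and observe $M_n$ is then triangular up to a row/column permutation with unit diagonal; (5) conclude $\abs{\det M_n}=1$, hence $\abs{\det C_n}=1$.

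The main obstacle I anticipate is step~(3): verifying that one can genuinely choose the pivots so that the pivot of $\vector{w}_k$ does not appear among $\vector{w}_1,\ldots,\vector{w}_{k-1}$, i.e.\ that the ``staircase'' really is triangular and not merely block-triangular with nontrivial diagonal blocks. This requires a careful bookkeeping of which entries of $\vector{u}^k-\vector{u}^0$ survive the deletion of the last row and column, and in particular checking the boundary cases where the cyclic shift wraps around (so that a $\pm1$ that would have been a clean pivot instead lands in the deleted row or column). If a fully triangular choice turns out to be unavailable, the fallback is to compute $\det M_n$ directly by cofactor expansion exploiting that each column has only two nonzero entries $+1$ and $-1$; such a $\{-1,0,1\}$ matrix with two nonzeros per column is (a truncation of) an incidence-type matrix, and a short argument on the associated bipartite/cycle structure still forces $\det M_n\in\{-1,0,+1\}$, with $0$ excluded by the linear independence already noted.
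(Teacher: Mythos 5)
Your route is genuinely different from the paper's. The paper writes down $C_n$ as an explicit $m^2\times m^2$ matrix (rows $\standard_{m^2}(2),\ldots,\standard_{m^2}(n)$, then $\standard_{m^2}(1)+\cdots+\standard_{m^2}(n)$, then $-\standard_{m^2}(i)+\standard_{m^2}(i+n)$ for $i=1,\ldots,m^2-n$) and dispenses with the determinant by a direct, unstated computation (``easy computations show''). You instead build a basis matrix $M_n$ from the transformed spine vectors and declare $C_n\ce M_n^{-1}$; the determinant then falls out of a block-triangular structure. Both are correct, but since $C_n$ is not uniquely determined by the constraints ``map $A_n\vector{u}^k$ to $\vector{e}_k$ after translation'', your $C_n$ need not literally equal the paper's. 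For the purposes of Section~6 this is harmless --- any unimodular choice serves --- but strictly speaking you prove the lemma for your $C_n$, not for the one in Appendix~A.

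Regarding the obstacle you flag in step~(3): it is resolvable, and your worry about the wraparound turns out to be the only subtlety. Writing $\vector{w}_k$ as an $(n-1)\times(n-1)$ matrix, its nonzero entries are $-1$ at $(i,i)$ for all $i$, and $+1$ at $(i,i+k)$ for $i\le n-1-k$ together with $+1$ at $(i,i+k-n)$ for $i\ge n-k+1$ (the wraparound entries, present when $k\ge 2$). For $k=1,\ldots,n-2$ the position $(1,1+k)$ is nonzero in $\vector{w}_k$ and in no other $\vector{w}_{k'}$; for $k=n-1$ the natural pivot $(1,n)$ is deleted but the wraparound entry $(2,1)$ serves instead, and it too appears only in $\vector{w}_{n-1}$. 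After permuting the pivot rows to the front and appending $\vector{e}_j$ for the non-pivot coordinates, the top-left $(n-1)\times(n-1)$ block is in fact \emph{diagonal} with entries $\pm1$, the top-right block is zero, and the bottom-right block is the identity. Hence $M_n$ is block lower triangular and $\abs{\det M_n}=1$.

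One concrete error in your fallback: you write that ``each column has only two nonzero entries $+1$ and $-1$''. That is false. Each $\vector{w}_k$ carries $n-1$ entries equal to $-1$ (the full diagonal) plus $n-2$ entries equal to $+1$, about $2n-3$ nonzeros in total, so $M_n$ is not an incidence-type matrix in that sense and the proposed cofactor/cycle argument would not apply as stated. Fortunately your primary pivot argument does not need the fallback. A second, cosmetic, discrepancy: you place the translation $\vector{b}_n$ before $C_n$, whereas the paper's affine map is $C_nA_n\birkhoff_n+\vector{b}_n$ (translation after the linear part), so your $\vector{b}_n$ should be $-C_nA_n\vector{u}^0$ rather than $-A_n\vector{u}^0$; this has no effect on the determinant.
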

Again, we refer to Appendix~\ref{app:details_on_birkhoff} for the definition of $C_n$ and $\vector{b}_n$ and the proof.

Because of the particular spine structure of the transformed polytope, the projection with respect to
the spine is simply described by removing the first $(n-1)$ coordinates of a point. Let $D_n$
denote the $(n-2)(n-1)\times (n-1)^2$-matrix realizing this projection and define
\[\fold{\birkhoff_n}:=D_n(C_nA_n\birkhoff_n+\vector{b}_n).\]
The volume of $\fold{\birkhoff_n}$ can now be expressed using Main Theorem~\ref{thm:lifting_theorem}.
Since the spine is spanned by $n-1$ coordinate vectors, its volume is $\frac{1}{(n-1)!}$. Therefore,
\begin{align*}
\binom{(n-1)^2}{n-1}\vol(C_nA_n\birkhoff_n+b_n)=\vol(\fold{\birkhoff_n})\frac{1}{(n-1)!}.
\end{align*}
Using Lemma~\ref{birkhoff_lemma_1} and  Lemma~\ref{birkhoff_lemma_2}, we arrive at the relation
\begin{align*}
\binom{(n-1)^2}{n-1}\vol(\birkhoff_n)=\vol(\fold{\birkhoff_n})\frac{1}{(n-1)!}n^{n-1}.
\end{align*}

Therefore, $\fold{\birkhoff_n}$
provides a new angle to study the volume of the Birkhoff polytope. 
We remark again that our construction has the advantage that the vertices
of $\fold{\birkhoff_n}$ can be easily generated since the matrices and vectors
of our construction are explicit (see Appendix~\ref{app:details_on_birkhoff}).
We list the $4!-4=20$ vertices of $\fold{\birkhoff}_4$ as an example (we set: $\overline{1}\ce-1$):

\begin{align*}
&\left(
\begin{matrix}
0&0&0\\
0&0&\overline{1}
\end{matrix}
\right),
\left(
\begin{matrix}
0&\overline{1}&1\\
0&1&0
\end{matrix}
\right),
\left(
\begin{matrix}
0&\overline{1}&1\\
0&0&0
\end{matrix}
\right),
\left(
\begin{matrix}
0&\overline{1}&0\\
0&1&0
\end{matrix}
\right),
\left(
\begin{matrix}
0&\overline{1}&0\\
0&0&1
\end{matrix}
\right),
\left(
\begin{matrix}
1&0&\overline{1}\\
0&\overline{1}&1
\end{matrix}
\right),
\left(
\begin{matrix}
1&0&\overline{1}\\
0&\overline{1}&0
\end{matrix}
\right),\\
&\left(
\begin{matrix}
0&0&0\\
1&0&0
\end{matrix}
\right),
\left(
\begin{matrix}
0&0&\overline{1}\\
1&0&0
\end{matrix}
\right),
\left(
\begin{matrix}
0&0&\overline{1}\\
0&0&1
\end{matrix}
\right),
\left(
\begin{matrix}
1&0&0\\
\overline{1}&0&0
\end{matrix}
\right),
\left(
\begin{matrix}
1&0&0\\
\overline{1}&\overline{1}&0
\end{matrix}
\right),
\left(
\begin{matrix}
0&1&0\\
0&0&\overline{1}
\end{matrix}
\right),
\left(
\begin{matrix}
0&1&0\\
\overline{1}&0&\overline{1}
\end{matrix}
\right),\\
&\left(
\begin{matrix}
0&0&0\\
\overline{1}&1&0
\end{matrix}
\right),
\left(
\begin{matrix}
0&0&0\\
0&\overline{1}&1
\end{matrix}
\right),
\left(
\begin{matrix}
\overline{1}&1&0\\
1&0&\overline{1}
\end{matrix}
\right),
\left(
\begin{matrix}
\overline{1}&1&0\\
0&0&0
\end{matrix}
\right),
\left(
\begin{matrix}
\overline{1}&0&1\\
1&0&0
\end{matrix}
\right),
\left(
\begin{matrix}
\overline{1}&0&1\\
0&1&0
\end{matrix}
\right).
\end{align*}

%%%%%%%%%%%%%%%%%%%%%%%%%%%%%%%%%%%%%%%%%%%%%%%%%%%%%%%%%%%%

\section{Conclusions and further remarks}
\label{sec:conclusions_and_remarks}
Main Theorem~\ref{thm:lifting_theorem} combines several new results. 
It answers the question of the existence of a triangulation of a polytope 
under the constraint that a given subset of the vertices of the polytope 
must be contained in every maximal simplex of the triangulation. 
Furthermore, it characterizes all such triangulations 
and provides a method to compute one (or all) efficiently
from the lift of a star triangulation. 
Finally, it generalizes the well-known relation 
$\vol(AM)=\abs{\det(A)}\vol(M)$ where $M$ is a measurable subset of $\R^d$ 
and $A\in\R^{d\times d}$ to certain cases where $A$ is not a square matrix.
In particular, it allows us to express the volume of an object in $\R^d$ 
in terms of the volume its ``shadow'' in $\R^e$, and vice versa.

The shadow that a cube in $\R^3$ casts if the light shines parallel to any of its space diagonals is a regular hexagon. Assuming a cube with side length $\ell$, the theorem implies that the volume of the cube and the volume (area) of its shadow (the hexagon) differ by a factor of $\sqrt{3}/\ell$ which provides an alternative method to compute the volume of a hexagon from the volume of a cube. By lifting the ``complicated'' hexagon to a higher dimensional space it gains more symmetries and becomes the comparatively simple cube. In the same fashion, the complicated Everest polytope is the shadow of the simpler simplotope which allowed the computation of its volume in Main Theorem~\ref{maintheorem1}.

Starting with a polytope and a spine (with at least two points),
it is easy to determine the volume of the ``shadow'' 
with respect to the spine using our theorem.
On the other hand, there is no easy way to tell if a given shape 
is the shadow of some higher dimensional object 
and in the case of the Everest polytope, this is the interesting direction.
We pose the question of whether other polytopes (e.g., the Birkhoff polytope)
can be expressed as shadows of other polytopes.
For that purpose, it might be worthwhile to find general methods 
or at least good heuristics to determine 
if a complicated shape can be recognized as the shadow of some simpler object.

\paragraph*{Acknowledgements.} We thank Raman Sanyal and Volkmar Welker for
helpful discussions that have led to significant simplifications of
our exposition.

\bibliography{KerberTichyWeitzerConstrainedTriangulationBibliography}
\bibliographystyle{abbrv}

\begin{appendix}

\section{Details on the Birkhoff projection}
\label{app:details_on_birkhoff}
The vertices of the $n$-th Birkhoff polytope $\birkhoff_n\subseteq\R^{n^2}\simeq\R^{n\times n}$ are given by:
\begin{align*}
\set{\vector{v}\in\R^{n\times n}\mid \vector{v}\text{ is a permutation matrix}}.
\end{align*}
It is shown in \cite[Example 2.6]{Wolff:UP} that the subset
\begin{align*}
\birkhoffspine_n\ce\set{\vector{u}^k\mid k\in\set{0,\ldots,n-1}}
\end{align*}
of the vertices of $\birkhoff_n$, where
\begin{align*}
\vector{u}\ce
\left(
\begin{matrix}
0&1&0&\cdots&0\\
\vdots&\ddots&\ddots&\ddots&\vdots\\
0&\cdots&0&1&0\\
0&\cdots&\cdots&0&1\\
1&0&\cdots&\cdots&0
\end{matrix}
\right)
\in\R^{n\times n},
\end{align*}
forms a spine of $\birkhoff_n$. In the following we formally define the matrices $A_n$, $B_n$, $C_n$, and $D_n$ as well as the translation vector $\vector{b}_n$ that are used in the computations of Section~\ref{sec:birkhoff}. Let $m\ce n-1$ and
\begin{align*}
W_n&\ce
\left(
\begin{matrix}
1&\cdots&0&0\\
\vdots&\ddots&\vdots&\vdots\\
0&\cdots&1&0
\end{matrix}
\right)
\in\R^{m\times n},\;
X_n\ce
\left(
\begin{matrix}
0&\cdots&0\\
\vdots&&\vdots\\
0&\cdots&0
\end{matrix}
\right)
\in\R^{m\times n},\\
Y_n&\ce
\left(
\begin{matrix}
1&\cdots&0\\
\vdots&\ddots&\vdots\\
0&\cdots&1\\
-1&\cdots&-1
\end{matrix}
\right)
\in\R^{n\times m},\;
Z_n\ce
\left(
\begin{matrix}
0&\cdots&0\\
\vdots&&\vdots\\
0&\cdots&0
\end{matrix}
\right)
\in\R^{n\times m}.
\end{align*}
Furthermore, let
\bgroup
\allowdisplaybreaks
\begin{align*}
A_n&\ce
\left(
\begin{matrix}
W_n&X_n&\cdots&X_n&X_n\\
X_n&\ddots&\ddots&\vdots&\vdots\\
\vdots&\ddots&\ddots&X_n&\vdots\\
X_n&\cdots&X_n&W_n&X_n
\end{matrix}
\right)
=
\left(
\begin{matrix}
\begin{matrix}
1&\cdots&0&0\\
\vdots&\ddots&\vdots&\vdots\\
0&\cdots&1&0
\end{matrix}
&&\vector{0}&
\begin{matrix}
0&\cdots&0\\
\vdots&&\vdots\\
0&\cdots&0
\end{matrix}
\\
&\ddots&&\vdots\\
\vector{0}&&
\begin{matrix}
1&\cdots&0&0\\
\vdots&\ddots&\vdots&\vdots\\
0&\cdots&1&0
\end{matrix}&
\begin{matrix}
0&\cdots&0\\
\vdots&&\vdots\\
0&\cdots&0
\end{matrix}
\end{matrix}
\right)
\in\R^{m^2\times n^2},\\
B_n&\ce
\left(
\begin{matrix}
Y_n&Z_n&\cdots&Z_n\\
Z_n&\ddots&\ddots&\vdots\\
\vdots&\ddots&\ddots&Z_n\\
Z_n&\cdots&Z_n&Y_n\\
-Y_n&\cdots&\cdots&-Y_n
\end{matrix}
\right)
=
\left(
\begin{matrix}
\begin{matrix}
1&\cdots&0\\
\vdots&\ddots&\vdots\\
0&\cdots&1\\
-1&\cdots&-1
\end{matrix}
&&\vector{0}
\\
&\ddots&\\
\vector{0}&&
\begin{matrix}
1&\cdots&0\\
\vdots&\ddots&\vdots\\
0&\cdots&1\\
-1&\cdots&-1
\end{matrix}\\
\begin{matrix}
-1&\cdots&0\\
\vdots&\ddots&\vdots\\
0&\cdots&-1\\
1&\cdots&1
\end{matrix}
&\cdots&
\begin{matrix}
-1&\cdots&0\\
\vdots&\ddots&\vdots\\
0&\cdots&-1\\
1&\cdots&1
\end{matrix}
\end{matrix}
\right)
\in\R^{n^2\times m^2},\\
C_n&\ce
\left(
\begin{matrix}
\standard_{m^2}(2)\\
\vdots\\
\standard_{m^2}(n)\\
\standard_{m^2}(1)+\cdots+\standard_{m^2}(n)\\
-\standard_{m^2}(1)+\standard_{m^2}(n+1)\\
\vdots\\
-\standard_{m^2}(m^2-n)+\standard_{m^2}(m^2)
\end{matrix}
\right)
=
\left(
\begin{matrix}
0&1&0&\cdots&0&0&\cdots&\cdots&0\\
\vdots&\ddots&\ddots&\ddots&\vdots&\vdots&&&\vdots\\
\vdots&&\ddots&\ddots&0&\vdots&&&\vdots\\
0&\cdots&\cdots&0&1&0&\cdots&\cdots&0\\
1&\cdots&\cdots&\cdots&1&0&\cdots&\cdots&0\\
-1&0&\cdots&\cdots&0&1&0&\cdots&0\\
0&\ddots&\ddots&&&\ddots&\ddots&\ddots&\vdots\\
\vdots&\ddots&\ddots&\ddots&&&\ddots&\ddots&0\\
0&\cdots&0&-1&0&\cdots&\cdots&0&1
\end{matrix}
\right)
\in\R^{m^2\times m^2},\\
D_n&\ce
\left(
\begin{matrix}
\vector{0}&I_{m(m-1)}
\end{matrix}
\right)
\in\R^{m(m-1)\times m^2}\\
\vector{b}_n&\ce-\standard_{m^2}(n)\in\R^{m^2}.
\end{align*}
\egroup
Clearly, if $A_n$ is applied to a vertex $\vector{v}$ of $\birkhoff_n$ it simply drops the last row and column of $\vector{v}\in\R^{n\times n}\simeq\R^{n^2}$. The removed entries of $\vector{v}$ (which is a permutation matrix, i.e. contains exactly one '$1$' in each row and in each column with all other entries being equal to zero) can obviously be recovered from the remaining entries and an easy computation shows that this is exactly what $B_n$ does:
\begin{align*}
B_nA_n\vector{v}+\vector{a}_n=\vector{v}
\end{align*}
for all vertices $\vector{v}$ of $\birkhoff_n$, where
\begin{align*}
\vector{a}_n&\ce
\left(
\begin{matrix}
0&\cdots&0&1\\
\vdots&&\vdots&\vdots\\
0&\cdots&0&1\\
1&\cdots&1&-(n-2)
\end{matrix}
\right).
\end{align*}
We compute
\begin{align*}
{B_n}^TB_n&=
\left(
\begin{matrix}
{Y_n}^T&Z_n&\cdots&Z_n&-{Y_n}^T\\
Z_n&\ddots&\ddots&\vdots&-{Y_n}^T\\
\vdots&\ddots&\ddots&Z_n&-{Y_n}^T\\
Z_n&\cdots&Z_n&{Y_n}^T&-{Y_n}^T
\end{matrix}
\right)
\left(
\begin{matrix}
Y_n&Z_n&\cdots&Z_n\\
Z_n&\ddots&\ddots&\vdots\\
\vdots&\ddots&\ddots&Z_n\\
Z_n&\cdots&Z_n&Y_n\\
-Y_n&\cdots&\cdots&-Y_n
\end{matrix}
\right)\\
&=
\left(
\begin{matrix}
2{Y_n}^TY_n&{Y_n}^TY_n&\cdots&{Y_n}^TY_n\\
{Y_n}^TY_n&\ddots&\ddots&\vdots\\
\vdots&\ddots&\ddots&{Y_n}^TY_n\\
{Y_n}^TY_n&\cdots&{Y_n}^TY_n&2{Y_n}^TY_n
\end{matrix}
\right)=
\left(
\begin{matrix}
2J_n&J_n&\cdots&J_n\\
J_n&\ddots&\ddots&\vdots\\
\vdots&\ddots&\ddots&J_n\\
J_n&\cdots&J_n&2J_n
\end{matrix}
\right)
\in\R^{m^2\times m^2}
\intertext{with}
J_n&\ce{Y_n}^TY_n=I_m+1=
\left(
\begin{matrix}
2&1&\cdots&1\\
1&\ddots&\ddots&\vdots\\
\vdots&\ddots&\ddots&1\\
1&\cdots&1&2
\end{matrix}
\right)
\in\R^{m\times m}.
\end{align*}
Using the following lemma it is easy to show that $\det(J_n)=n$ and
\begin{align*}
\sqrt{\det({B_n}^TB_n)}=\sqrt{n^mn^m}=n^m.
\end{align*}
\begin{lemma}
Let $m,n\in\N$, $A\in\R^{m\times m}$, and
\begin{align*}
B&\ce
\left(
\begin{matrix}
2A&A&\cdots&A\\
A&\ddots&\ddots&\vdots\\
\vdots&\ddots&\ddots&A\\
A&\cdots&A&2A
\end{matrix}
\right)
\in\R^{(mn)\times(mn)}.
\end{align*}
Then $\det(B)=(n+1)^m\det(A)^n$.
\end{lemma}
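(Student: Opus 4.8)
The plan is to prove the determinant formula by a sequence of row-block and column-block operations that reduce $B$ to a block-triangular form whose diagonal blocks are multiples of $A$. Write $B$ as an $n\times n$ array of $m\times m$ blocks: the diagonal blocks are all $2A$ and the off-diagonal blocks are all $A$. I will treat $B$ as acting on $\R^{mn}$ decomposed into $n$ coordinate groups of size $m$, and perform elementary block operations (subtracting one block-row from another, which does not change the determinant).

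First I would subtract block-row $n$ from each of block-rows $1,\ldots,n-1$. The effect on block-row $i$ (for $i<n$) is: the $i$-th diagonal block becomes $2A-A=A$, the $n$-th block in that row becomes $A-2A=-A$, and every other off-diagonal block becomes $A-A=0$. So after this step the leading $(n-1)\times(n-1)$ array of blocks is $A\cdot I_{m(n-1)}$ (block-diagonal with $A$'s), the last block-column of those rows is all $-A$, the last block-row is unchanged ($A,\ldots,A,2A$), and everything else is zero. Next I clear the last block-row: for each $i<n$, add block-row $i$ to block-row $n$. Since block-row $i$ now has $A$ in column $i$ and $-A$ in column $n$ (and zeros elsewhere), adding all of them to block-row $n$ turns the column-$i$ entry of block-row $n$ into $A-A=0$ for each $i<n$, and turns the column-$n$ entry into $2A+(n-1)(-A)=(3-n)A$. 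Hmm — that gives $(3-n)A$, not what I want, so instead I should add block-row $i$ to block-row $n$ after a sign flip, or more cleanly: note block-row $n$ is $(A,\ldots,A,2A)$; I want to kill the first $n-1$ entries. Adding block-row $i$ (which is $(0,\ldots,A,\ldots,0,-A)$ with $A$ in slot $i$) to block-row $n$ changes slot $i$ from $A$ to $2A$ — wrong direction. So I subtract: block-row $n$ minus block-row $i$ changes slot $i$ from $A$ to $0$ and slot $n$ from (current) to (current)$+A$. Doing this for all $i=1,\ldots,n-1$ turns block-row $n$ into $(0,\ldots,0,\,2A+(n-1)A)=(0,\ldots,0,(n+1)A)$. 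Now $B$ has been reduced to block-upper-triangular form with diagonal blocks $A,A,\ldots,A$ ($n-1$ copies) and $(n+1)A$.

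Finally, the determinant of a block-triangular matrix whose diagonal blocks are square is the product of the determinants of the diagonal blocks (this is standard and can be justified by a further reduction to genuine triangular form, or by induction on $n$). Hence $\det(B)=\det(A)^{n-1}\cdot\det((n+1)A)=\det(A)^{n-1}\cdot(n+1)^m\det(A)=(n+1)^m\det(A)^n$, as claimed. The main obstacle is purely bookkeeping: making sure the signs and the running totals in the last block-row come out correctly after the two sweeps of block operations, and invoking the block-triangular determinant identity in a form that applies when the blocks commute (here all blocks are scalar multiples of $A$, so this is unproblematic); alternatively one can avoid that identity entirely by an induction on $n$, expanding along the last block-row after the reduction.
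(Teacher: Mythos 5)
Your proof is correct. The block row operations you perform are genuine determinant-preserving elementary row operations (applied to each of the $m$ scalar rows within a block row), and after the two sweeps the matrix is block upper triangular with diagonal blocks $A,\ldots,A,(n+1)A$, which immediately yields $\det(B)=\det(A)^{n-1}\cdot(n+1)^m\det(A)=(n+1)^m\det(A)^n$. Two small remarks. First, the determinant of a block upper triangular matrix with square diagonal blocks equals the product of the determinants of the diagonal blocks with no commutativity hypothesis whatsoever --- the identity $\det\left(\begin{smallmatrix}P&Q\\ 0&R\end{smallmatrix}\right)=\det(P)\det(R)$ is a polynomial identity valid for arbitrary $P,Q,R$ of the appropriate sizes --- so the aside about commuting blocks is an unnecessary caveat rather than a needed hypothesis. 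Second, the mid-proof detour (adding block rows, noticing the wrong sign, then switching to subtraction) should of course be removed in a clean write-up; only the corrected version is relevant.

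Your route differs from the paper's, which simply says ``expansion of the first row (of blocks) of $B$ and induction on $n$,'' i.e., an inductive argument reducing $\det(B_n)$ to $\det(B_{n-1})$ via a block Laplace or Schur-complement step. Your version is a direct, non-inductive reduction to block triangular form, which is arguably a touch cleaner since it sidesteps any question of how to make the block-cofactor expansion precise; the paper's version is shorter to state but leaves more to the reader. Both are elementary and both are valid.
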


\begin{proof}
Expansion of the first row (of blocks) of $B$ and induction on $n$.
\end{proof}

Furthermore, easy computations show that
\begin{align*}
\abs{\det(C_n)}=1
\end{align*}
and
\begin{align*}
C_nA_n\birkhoffspine_n+\vector{b}_n=\set{\standard_{m^2}(i)\mid i\in\set{0,\ldots,n-1}}.
\end{align*}
\end{appendix}
\end{document}